\theoremstyle{plain}
\newtheorem{theorem}{Theorem}[section]
\newtheorem{lemma}[theorem]{Lemma}
\newtheorem{corollary}[theorem]{Corollary}
\newtheorem{proposition}[theorem]{Proposition}
\theoremstyle{definition}
\newtheorem{definition}[theorem]{Definition}
\newtheorem{example}{Example}
\theoremstyle{remark}
\newtheorem{remark}{Remark}
\newcommand{\N}{\mathbb{N}}
\newcommand{\Z}{\mathbb{Z}}
\newcommand{\Q}{\mathbb{Q}}
\newcommand{\R}{\mathbb{R}}
\newcommand{\T}{\mathbb{T}}
\newcommand {\h}{\mathfrak h}
\newcommand {\D}{\mathcal D}
\begin{document}

\title[Isoperimetric-type ineq. for Mather's $\beta$-function of convex bill.]{Isoperimetric-type  inequalities for Mather's $\beta$-function of convex billiards}

\author{Stefano Baranzini}
\address{Dipartimento di Matematica ``Giuseppe Peano'', Università degli Studi di Torino, Via Carlo Alberto 10, 10123 Torino, Italy}
\email{stefano.baranzini@unito.it}

\author{Misha Bialy}
\address{School of Mathematical Sciences, Raymond and Beverly Sackler Faculty of Exact Sciences, Tel Aviv University, Israel} 
\email{bialy@tauex.tau.ac.il}

\author{Alfonso Sorrentino}
\address{Dipartimento di Matematica, Università degli  Studi di Roma ``Tor Vergata'', Via della ricerca scientifica 1, 00133 Rome, Italy}
\email{sorrentino@mat.uniroma2.it}

\maketitle 

\begin{abstract} 
In this article we discuss pointwise spectral rigidity results for several billiard systems ({\it e.g.}, Birkhoff billiards, symplectic billiards and $4^{\rm th}$ billiards), showing that a single value of Mather’s $\beta$-function can determine whether a strongly convex smooth planar domain is a disk (or an ellipse, in the affine-invariant case of  symplectic billiards). 
Evoking the famous question ``{\it Can you hear the shape of a billiard?}'', one could say that circular billiards can be heard by a single whisper! More specifically, we prove isoperimetric-type inequalities comparing the $\beta$-function associated to the billiard map of domain to that of a disk with the same perimeter or area, and investigate what are the consequences of having an equality.
Surprisingly, this rigidity fails for outer billiards, where explicit counterexamples are constructed for rotation numbers $1/3$ and $1/4$. The results are framed within Aubry–Mather theory and provide a modern dynamical reinterpretation and extension of classical geometric inequalities for extremal polygons.
\end{abstract}

\section{Introduction}

A \textit{Birkhoff billiard} models the motion of a point particle moving at constant speed within a smooth, strictly convex, bounded domain $\Omega \subset \mathbb{R}^2$, whose boundary $\partial \Omega$ is a $C^2$-smooth curve of positive curvature. The particle travels along straight lines and reflects elastically upon hitting the boundary, so that the angle of incidence equals the angle of reflection.\\

	\begin{figure}[h]
	\centering
	\includegraphics[width=0.4\textwidth]{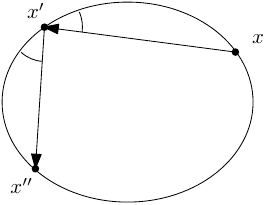}
	\caption{Birkhoff billiard}
	\label{fig:Birkhoff}
\end{figure}

Billiard systems have long fascinated researchers across multiple mathematical disciplines. Their intuitive physical interpretation and conceptual simplicity stand in contrast to their rich and varied dynamical behavior. Moreover, while it is evident that the geometry dictates the motion, a more subtle and compelling question is to which extent the geometry can be reconstructed from the dynamics.  This problem lies at the heart of numerous rigidity phenomena and longstanding conjectures.\\

One of the most iconic questions in this framework, with a long and rich history at the intersection of dynamical systems and geometry, is: \textit{Can one hear the shape of a billiard?} That is, can we recover the shape of the domain from the action of its periodic orbits?
In the context of Birkhoff billiards, this takes a particularly interesting geometric form, as periodic orbits correspond to inscribed polygons, and their action relates to their perimeter.\\

Among all periodic orbits, a special role is played by those maximizing the perimeter in their rotation number class, also known as {\it Aubry–Mather  orbits}; the collection of their actions, marked with the corresponding rotation numbers, is usually called the \textit{marked action} (or \textit{length}) spectrum of the domain. \\

In this article, we focus on the question whether the marked length spectrum determines the domain, in the specific case of circular billiards. We explore this through the lens of Aubry–Mather theory. In this context, an important role is played by  the so-called {\it Mather's $\beta$-function}, which encodes the {\it minimal average action} of orbits with a given rotation number, hence representing a natural extension of the marked length spectrum also to irrational rotation numbers. A more detailed account of this function and its properties will be given in Section \ref{sec:AubryMather}.\\

While it is known that the full $\beta$-function determines whether the domain is a disk (a result that follows from \cite{Bialy}), much less is understood in general, even for other billiards such as elliptic ones, for which the dynamics can be fully described. We summarize known results in Section \ref{sec:globalbeta}. We stress that all these results concern the knowledge of the entire function or infinitely many of its values, and are mostly obtained as corollaries of results on the Birkhoff conjecture (which claims that the only integrable tables are circular and elliptic ones).\\

In this article, we prove the following surprising result:\\

\noindent \textit{A single value of the $\beta$-function of a Birkhoff billiard map suffices to determine whether the domain is a disk}.\\ See  {\bf Theorem \ref{thmBirkbill}} for a complete statement. \\

This {pointwise spectral rigidity} result --which, to our knowledge, is the first of its kind in dynamics --  is established via an isoperimetric-type inequality for Mather’s $\beta$-function, which implies that equality at certain rotation numbers forces the domain to be a circle. Figuratively speaking, one could say that {\it circular billiards can be heard by a single whisper}! More specifically:\\

\noindent \textit{Mather's $\beta$-function of the billiard map in $\Omega$ is always less or equal than Mather's $\beta$-function of the billiard map in a disk of the same perimeter as $\Omega$. If equality holds at some rational in $\left(0, \frac 12\right)$ (or at an irrational not belonging to a certain countable family), then $\Omega$ is a disk.} \\ See  {\bf Theorem \ref{thmBirkbill}} for a complete statement. \\

We remark that  there are indeed non-circular domains (examples of {\it Gutkin billiards}) for which equality holds at some irrational number. Irrational rotation numbers for which this can happen can be explicitely characterized (see Remark \ref{remthmbirkhoff} and \eqref{defR}).\\

We extend this analysis to three natural generalizations of Birkhoff billiards: \textit{outer billiards}, \textit{symplectic billiards}, and \textit{outer length} (or $4^{\text{th}}$) \textit{billiards}. These models are natural variations of the classical one, obtained by combining the following modifications: the dynamics occurs either inside or outside the domain, and the action of their periodic orbits is related to either the perimeter or the area of the corresponding polygon. We summarize their key features in Table \ref{tab:billiard_classification}; we refer to Section \ref{sec:fab4} for a more detailed description of these models.\\

\begin{table}[h]
\centering
\small 
\renewcommand{\arraystretch}{1.8}
\begin{tabular}{|>{\centering\arraybackslash}m{2.8cm}|>{\centering\arraybackslash}m{2.8cm}|>{\centering\arraybackslash}m{2.8cm}|}
\hline
 & \textbf{Dynamics inside $\Omega$} & \textbf{Dynamics outside $\Omega$} \\
\hline
\textbf{Action related to the length} & Birkhoff billiards \break {\Small Section \ref{sec:Birkhoffbill}} & Outer length  (or $4^{\rm th}$) billiards \break {\Small Section \ref{sec:4thbill}}\\
\hline
\textbf{Action related to the area} & Symplectic billiards \break {\Small Section \ref{sec:symplbill}} & Outer (or dual) billiards \break  {\Small Section \ref{sec:outerbill}}\\
\hline
\end{tabular}
\bigskip
\caption{Classification of the four billiard models}
\label{tab:billiard_classification}
\end{table}

In particular, we prove that:\\

\noindent \textit{A single value of the $\beta$-function of a symplectic or $4^{\rm th}$ billiard map suffices to determine whether the domain is a disk}. \\
See  {\bf Theorems \ref{thmsymplbill}} and {\bf \ref{thm4thbill}} for  complete statements. \\

Similarly as above, also in these cases we establish isoperimetric-type inequalities for Mather’s $\beta$-function. More specifically:\\

\noindent \textit{Mather's $\beta$-function of the billiard map in $\Omega$ is always less or equal than Mather's $\beta$-function of the billiard map in a disk of the same area} (respectively, {\it perimeter}) {\it as $\Omega$. If equality holds at some number in $\left(0, \frac 12\right)$, then $\Omega$ is a disk}. \\
See  {\bf Theorems \ref{thmsymplbill}} and {\bf \ref{thm4thbill}} for  complete statements. \\

We remark that in these cases:
\begin{itemize}
\item the equality case for any number in $\left(0, \frac 12\right)$, not necessarily rational, allows us to deduce that $\Omega$ is a disk;
\item symplectic billiards (and outer billiards) are affine-invariant, hence disks and ellipses are dynamically equivalent; hence, the above rigidity result should be interpreted up to affine equivalence.\\
\end{itemize}

 Key steps in our proofs of these results are essentially the careful choice of a (non-standard) generating function and/or a clever parametrization of the billiard table's boundary.\\

So far, these four billiard models appear to share similar rigidity properties, particularly with regard to integrability (e.g., the Birkhoff conjecture) and their action spectra. In essence, many of the rigidity results known for Birkhoff billiards have found natural extensions to these generalized settings. However, no genuinely new or previously unknown phenomena -- distinct from those observed in classical Birkhoff billiards -- have yet emerged.\\

Surprisingly, the pointwise spectral rigidity described above does not extend to outer billiards, for which we construct explicit counterexamples --- for rotation numbers $1/3$ and $1/4$ --- to the isoperimetric-type inequality underlying the rigidity result. See {\bf Theorem \ref{thm:beta1/3}} (rotation number $1/3$) and {\bf Theorems \ref{thm:14centralsym} \& \ref{thm:nonsymmetric1/4}} (rotation number 1/4).

This will be achieved by exploiting some relations between the dynamics of outer  and symplectic billiards, in presence of invariant curves consisting of periodic points of rotation number $\frac 13$ and $\frac 14$. \\
 This distinction may be viewed as one of the first observed spectral differences between outer billiards and the other three models.\\

\subsection{Comparison with previous literature} \label{seccomparison}
These results can be viewed as dynamical revisitation of classical -- and in some cases not very well-known outside certain communities -- inequalities from convex geometry concerning extremal polygons, which were recently rediscovered also by Aliev (for Birkhoff billiards, see \cite{Aliev}).\\

 More specifically, for the perimeter of inscribed and circumscribed $n$-gons  the following results were proven by Schneider in 1971  \cite{Schneider1,Schneider2}. The equality case in the first result was proven partially by Schneider and completed by Florian and Prachar in 1986, \cite{FlorianPrachar} (see also the book Fejes T\'oth \cite{Fejesetco} p.196). \\

\begin{itemize}
\item[{\bf (i)}] {\it Let $\mathcal L^{\rm ins}_{\Omega}\big(n\big)$ be the maximal perimeter of  inscribed $n$-gons in $\Omega$, $n\geq 3$, and  denote by $|\partial\Omega|$ the length of $\partial\Omega$. Then:
\begin{equation}\label{insperimeter}
\mathcal L^{\rm ins}_{\Omega}\left(n \right)\geq \frac{n\, |\partial\Omega|}{\pi}\,\sin\bigg(\frac{\pi}{n}\bigg).
\end{equation}
If for some $n\geq 3$ equality is achieved, then $\Omega$ is a disk.}\\
\item[{\bf (ii)}] {\it Let $\mathcal L_{\Omega}^{\rm circ}\left(n \right)$ be the minimal perimeter of circumscribed $n$-gons of $\Omega$, $n\geq 3$, and denote by $|\partial\Omega|$ the length of $\partial\Omega$. Then:
\begin{equation}\label{circperimeter}
\mathcal L_{\Omega}^{\rm circ}\left(n \right)\leq \frac{n\, |\partial\Omega|}{\pi}\,
\tan\bigg(\frac{\pi}{n}\bigg).
\end{equation}
If for some $n \geq 3$ equality is achieved, then $\Omega$ is a disk.}\\
\end{itemize}

As for the areas of inscribed polygons, the following  was proven by Sas in 1939 (see \cite{Sas}):

\begin{itemize}
\item[{\bf (iii)}]
{\it Let $\mathcal A^{\rm ins}_\Omega(n)$ be the maximal area of $n$-gons  inscribed in $\Omega$, $n\geq 3$, and denote by $|\Omega|$  the area of $\Omega$. Then:
\begin{equation}\label{insarea}
		\mathcal A^{\rm ins}_\Omega\left(n\right)
		\geq \frac{n\, |\Omega|}{2\pi}\, \sin\bigg(\frac{2\pi}{n}\bigg).
\end{equation}
If for some $n\geq 3$ equality is achieved, then $\Omega$ is an ellipse.}\\
\end{itemize}

In this article, we revisit these inequalities for the corresponding $\beta$-functions  through a more modern dynamical and variational lens. This allows us  to extend the analysis of the equality case also  to  non-rational rotation numbers,  and to highlight a relation with so-called {\it Gutkin's billiards} (see Remark \ref{remthmbirkhoff} (ii)). 
In some sense, our approach extends one of the paper \cite{AlbersTabachnikov_Dowker} where Dowker's theorems were understood via Mather's $\beta$-function.\\
Moreover, we investigate the failure of similar inequalities in the case of outer billiards (which correspond to circumscribed $n$-gons of minimal area). \\

\subsection{Organization of the article}
The article is organized as follows:
\begin{itemize}
\item In Section \ref{sec:AubryMather} we recall the definition of Mather's $\beta$-function and recall some of its properties. In particular, we explain the main idea behind our results discussing a simple example, namely a pointwise rigidity result for $\beta$-funtions associated to integrable twist maps perturbed by a potential (see Theorem \ref{thm1}).
\item In Section \ref{sec:fab4} we introduce the four billiard models and specify the  generating functions that we are going to use,  providing a geometric interpretation of the corresponding action of  periodic orbits.
\item Section \ref{secrigiditybeta} is the core of the article. After having recalled what is known about global rigidity of the $\beta$-functions of these billiard models (Section \ref{sec:globalbeta}), in Section \ref{sec:mainresults}  we state our main pointwise rigidity results and provide their proofs in the subsequent sections.
\item In Section \ref{sec:counterexouterbill} we show that these local rigidity results do not extend to outer billiards and construct examples, relating the dynamics of symplectic and outer billiards in presence of invariant curves consisting of periodic points of rotation number $1/3$ (Section \ref{sec:ex13}) and $1/4$ (Section \ref{sec:14}). We conclude the section with some open questions that naturally arise and, in our opinion, deserve further investigation (Section \ref{sec:openquestions}).
\end{itemize}

\bigskip

\subsection*{Acknowledgements}
MB and AS are grateful to the organizers of the workshop ``{\it Billiards and quantitative symplectic geometry}'', held at Universit\"at Heidelberg (14 - 18 July 2025), for the very stimulating environment.\\ 
MB acknowledges the  supported of ISF grant 974/24. SB and AS acknowledge the support of the Italian Ministry of University and Research’s PRIN 2022 grant ``{\it Stability in Hamiltonian dynamics and beyond}'' and of the 2025 INDAM-GNAMPA project ``{\it Non integrabilità e complessità in Meccanica
Celeste}''. AS also acknowledges the support of the Department of Excellence grant MatMod@TOV (2023-27), awarded to the Department of Mathematic at University of Rome Tor Vergata. \\
SB and AS are members of the INdAM research group GNAMPA and the UMI group DinAmicI.

\section{Aubry-Mather theory for twist maps of the annulus}\label{sec:AubryMather}
In this section we provide a coincise introduction to Aubry-Mather theory and introduce our main object of investigation, namely {\it Mather's $\beta$-function}.\\

At the beginning of 1980s Serge Aubry \cite{Aubry, AubryLeDaeron} and John Mather \cite{Mather1982} developed, independently, what nowadays is commonly called {\it Aubry--Mather theory}.
This novel approach to the study of the dynamics of twist diffeomorphisms of the annulus, pointed out the existence of many {\it action-minimizing orbits}  for any given rotation number. 
For a more detailed introduction,  see  for example \cite{MatherForni, Siburg, SorLecNotes}).\\

More precisely, let $a,b \in \R $, with $a<b$, and let 
$$f: \R/\Z \times (a,b) \longrightarrow \R/\Z \times (a,b)$$ 
be  a {\it positive  twist map}, {\it i.e.}, a $C^1$ diffeomorphism such that its lift  to the universal cover $\tilde{f}$ satisfies the following properties (we denote $(x_1,y_1)= \tilde{f}(x_0,y_0)$):
\begin{itemize}
	\item[(i)] $\tilde{f}(x_0+1, y_0) = \tilde{f}(x_0, y_0) + (1,0)$;
	\item[(ii)] $\tilde{f}$ extends continuously to $\R\times \{a\}$ and $\R\times \{b\}$ by a rotation:
	$$
	\tilde{f}(x,a) = (x+ \omega_-, a) \qquad {\rm and }\qquad \tilde{f}(x,b) = (x+ \omega_+, b).
	$$
	\item[(iii)] $\frac{\partial x_1}{\partial y_0} \geq c>0$  (positive  twist condition),
	\item[(iv)] $\tilde{f}$ admits a (periodic) {\it generating function} $S$ ({\it i.e.},  it is an exact symplectic map):
	$$
	y_1\,dx_1 - y_0\,dx_0 = dS(x_0,x_1).\\
	$$
\end{itemize}

\medskip

We call the  interval $(\omega_-,\omega_+)\subset \R$  the twist interval of $f$ (notice that $\omega_- < \omega_+$ because of the positive twist condition).

\begin{remark}
(i) The definition of negative  twist map is similar, but condition  (iii) is replaced by $\frac{\partial x_1}{\partial y_0} \leq c < 0$.\\
(ii) One could also consider the infinite (or semi-infinite) cylinder, {\it i.e.}, $a$ or $b$ might be infinite, with an easy adaptation of the definition. In this case the twist interval will be an infinite interval. 
\end{remark}

In particular, it follows from (iv) that:

\begin{equation} \label{genfuncttwistmap}
	\left\{
	\begin{array}{l}
		y_1 = \partial_2 S (x_0,x_1)\\
		y_0 = - \partial_1 S(x_0,x_1)\,,\\
	\end{array}
	\right.
\end{equation}
where $\partial_i$, $i=1,2$, denotes the partial derivatives with respect to the $i$-th component.\\

\medskip

As it follows from (\ref{genfuncttwistmap}), orbits $(x_i)_{i\in\Z}$ of the  twist map $f$ correspond to  critical configurations  of the {\it action functional}
$$
\{x_i\}_{i\in\Z} \longmapsto \sum_{i\in \Z} S(x_i, x_{i+1})
$$
and vice-versa.\\

Aubry-Mather theory is concerned with the study of orbits that minimize this action-functional amongst all configurations with a prescribed rotation number; recall that the rotation number of an orbit $\{x_i\}_{i\in\Z}$ is given by $ \omega = \lim_{i\rightarrow \pm \infty} \frac{x_i}{i}$, if this limit exists. In this context, {\it minimizing} is meant in the statistical mechanical sense, {\it i.e.}, every finite segment of the configuration minimizes the action functional with fixed end-point condition.\\

\noindent {\bf Theorem (Aubry \cite{Aubry, AubryLeDaeron}, Mather \cite{Mather1982, MatherForni})}  {\it A positive twist map possesses action-minimizing orbits for every rotation number in its twist interval $(\omega_-,\omega_+)$. For every rational  rotation number in the twist interval, there is  at least one action-minimizing periodic orbit.  Moreover, every action-minimizing orbit lies on a Lipschitz graph over the $x$-axis.}\\

We can now introduce the {\it minimal average action} (or {\it Mather's $\beta$-function}).

\begin{definition}\label{defbeta}
	Let $x^{\omega} = \{x_i\}_{i\in\Z}$ be any minimal orbit with rotation number $\omega$. Then, the value of the {\em minimal average action} at $\omega$ is given by (this value is well-defined, since it does not depend on the chosen orbit $x^\omega$):
	\begin{equation}\label{avaction}
		\beta(\omega) := \lim_{N\rightarrow +\infty} \frac{1}{2N} \sum_{i=-N}^{N-1} S(x_i,x_{i+1}).
	\end{equation}
\end{definition}

This function $\beta: (\omega_-, \omega_+) \longrightarrow \R$ enjoys many properties and encodes interesting information on the dynamics. In particular:
\begin{itemize}
	\item[i)] $\beta$ is strictly convex and, hence, continuous (see \cite{MatherForni});
	\item[ii)] $\beta$ is differentiable at all irrationals (see \cite{Mather90});
	\item[iii)] $\beta$ is differentiable at a rational $p/q$ if and only if there exists an invariant circle consisting of periodic minimal orbits of rotation number $p/q$ (see \cite{Mather90}).\\
\end{itemize}

For irrational $\omega$, a more useful characterization of $\beta$ is the following:
$\beta(\omega)$ is the minimum of the so-called {\it Percival's Lagrangian}
\[
P_\omega(\psi) := \int_0^1 S(\psi(t), \psi(t+\omega)) \, dt
\]
 over the set of measurable functions $\psi:\R \to \R$ that satisfy the periodicity condition $\psi(t + 1) = \psi(t) + 1$
and the condition that $\psi(t) - t$ is bounded.
Moreover, the minimizer of $P_\omega$ is unique up to translation, {\it i.e.}, if $\varphi$ and $\tilde{\varphi}$ minimize $P_\omega$, then $\tilde{\varphi}(t) = \varphi(t + a)$ almost everywhere,
for some $a \in \mathbb{R}$ (see \cite[Theorem 14.3]{MatherForni}).\\
In particular, if $\varphi$ minimizes $P_\omega$, then we may define  action-minimizing configurations of rotation number $\omega$ as follows (see \cite[Section 12]{MatherForni}). For every $t \in \mathbb{R}$, we set
\begin{equation}\label{volotea}
x^{t\pm 0}_n := \varphi(t + n\,\omega  \pm 0) \qquad n\in \Z.
\end{equation}

\medskip

\begin{remark}
Actually, also a sort of converse is true. Starting from an action-minimizing configuration $x=\{x_n\}_{n\in\Z}$ with irrational rotation number $\omega$, it is possible to obtain a minimizer of $P_\omega$ in the following way.
Let us consider the so-called  \emph{Aubry's hull function} associated to $x=\{x_n\}_{n\in\Z}$:
\[
\varphi_x(t) = \sup\{ x_{-q} + p: \;  p - q \omega \leq t \}.
\]
One can prove that $\varphi_x: \R\longrightarrow \R$ is strictly increasing and $\varphi_x(t + 1) = \varphi_x(t) + 1$ (see for example \cite[Section 12]{MatherForni}).
Moreover,  $\varphi_x$ minimizes $P_\omega$.\\
\end{remark}

We can now prove the following lemma that will play a crucial role for our results.

	\begin{lemma}\label{linearconfigurations}
Let  $x_n:=x_0+n\,\omega$, $n\in\Z$,  be an $\omega$-equispaced configuration sequence with the rotation number $\omega$ and starting point at $x_0 \in [0,1)$.
Then, its average action $\mathcal A_\omega(x_0):=\lim_{N\rightarrow +\infty} \frac{1}{2N} \sum_{i=-N}^{N-1} S(x_i,x_{i+1})$ satisfies the inequality:
\begin{equation}\label{eq:Abetaomega}
\mathcal A_\omega(x_0) \geq\beta(\omega).
\end{equation}
Moreover, if  equality holds then the linear configuration $\{x_n\}_{n\in\Z}$ is an action-minimizing configuration. 
In particular, if $\omega$ is irrational, 
the twist map admits an invariant circle of rotation number $\omega$ given by $\{y=\omega\}$ (hence, all $\omega$-equispaced configurations are action-minimizing). 
	\end{lemma}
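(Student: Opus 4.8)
The plan is to prove the inequality first and then analyze the equality case, in each instance splitting according to whether $\omega$ is irrational or rational. The unifying idea is that the equispaced configuration $\{x_n\}$ is exactly the one generated by the hull function $\psi_{x_0}(t):=t+x_0$, which is admissible in Percival's variational problem (it satisfies $\psi_{x_0}(t+1)=\psi_{x_0}(t)+1$ and $\psi_{x_0}(t)-t\equiv x_0$ is bounded). For the inequality \eqref{eq:Abetaomega}, I would set $h(u):=S(u,u+\omega)$, which is continuous and $1$-periodic by the periodicity of $S$, and rewrite the average action as a Birkhoff average
\[
\mathcal A_\omega(x_0)=\lim_{N\to\infty}\frac{1}{2N}\sum_{i=-N}^{N-1}h(x_0+i\omega).
\]
For irrational $\omega$, Weyl equidistribution of $\{x_0+i\omega\bmod 1\}$ identifies this limit with $\int_0^1 h\,du=\int_0^1 S(t+x_0,t+x_0+\omega)\,dt=P_\omega(\psi_{x_0})$, whence $\mathcal A_\omega(x_0)=P_\omega(\psi_{x_0})\ge \min_\psi P_\omega(\psi)=\beta(\omega)$. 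For rational $\omega=p/q$ the Birkhoff average is a genuine finite mean over the $(p,q)$-periodic configuration $\{x_0+ip/q\}$, and the inequality is immediate from the variational definition of $\beta(p/q)$ as the minimum of the average action over all $(p,q)$-periodic configurations.

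For the equality case with irrational $\omega$, equality in \eqref{eq:Abetaomega} says precisely that $\psi_{x_0}(t)=t+x_0$ minimizes $P_\omega$. By uniqueness of the Percival minimizer up to translation (\cite[Theorem 14.3]{MatherForni}), every translate $t\mapsto t+c$ is then also a minimizer; since $\psi_{x_0}$ is continuous and strictly increasing, the configurations it generates via \eqref{volotea} carry no jumps and read $x_n=c+n\omega$. Letting $c$ vary, these action-minimizing configurations sweep out a Lipschitz graph invariant under $f$, on which the induced $x$-dynamics is the rigid rotation $x\mapsto x+\omega$; this is the invariant circle of rotation number $\omega$, and every equispaced configuration is action-minimizing. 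Its height is read off from $y_n=\partial_2 S(x_{n-1},x_n)$, matching $\{y=\omega\}$ in the coordinates used.

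The delicate point — and the step I expect to be the main obstacle — is the equality case for rational $\omega=p/q$. Here equality only yields that the single equispaced configuration $\{x_0+ip/q\}$ is a minimizer, hence a periodic orbit, and a minimizing periodic orbit may a priori be isolated, so an invariant circle does not follow directly. By characterization (iii) of $\beta$, producing the circle is equivalent to proving that $\beta$ is differentiable at $p/q$. A Fourier expansion exposes the difficulty: writing
\[
\mathcal A_{p/q}(x_0)=\int_0^1 h\,du+\sum_{m\neq 0}\hat h_{mq}\,e^{2\pi\I m q x_0},
\]
we see that $\mathcal A_{p/q}(\cdot)$ oscillates about its mean $\int_0^1 h$; since it is everywhere $\ge\beta(p/q)$ and the hypothesis places a global minimum equal to $\beta(p/q)$ at $x_0$, the circle follows only after upgrading this to the constancy of $x_0\mapsto\mathcal A_{p/q}(x_0)$, i.e. to the stronger equality $\beta(p/q)=\int_0^1 h$ in which the minimum coincides with the mean. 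I would attempt this by ruling out isolated minimizers through the ordering/non-crossing property of Aubry–Mather minimizers, showing that a minimizing equispaced orbit cannot sit at the boundary of a gap; alternatively, in the intended geometric applications one can invoke the accompanying equality cases of the classical extremal-polygon theorems (Schneider's and Sas') to identify the extremal polygon as the regular one. Once the invariant circle is in place, the assertions that all equispaced configurations are action-minimizing and that the circle is $\{y=\omega\}$ follow exactly as in the irrational case.
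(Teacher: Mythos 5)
Your proof of the inequality and of the irrational equality case is correct and close in spirit to the paper's. For the inequality the paper argues differently: it compares the equispaced segment directly with an action-minimizing configuration $\{y_n\}$, using the a priori bound $|y_n-y_0-n\,\omega|<1$ to absorb the endpoint discrepancy into a constant $C$ independent of $N$, and then averages; you instead go through Percival's Lagrangian, which is equally valid and arguably cleaner. In the equality case the paper performs exactly your computation: the Birkhoff ergodic theorem gives $\mathcal A_\omega(x_0)=\int_0^1 S(t,t+\omega)\,dt$ (so, as you observe via Weyl, $\mathcal A_\omega$ is independent of $x_0$ for irrational $\omega$, which is why one-point equality suffices there), and equality forces $\varphi(t)=t$ to minimize $P_\omega$. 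One simplification: you do not need uniqueness of the Percival minimizer up to translation; the paper applies \eqref{volotea} to the single continuous minimizer $\varphi(t)=t$ at every $t\in\R$, which already produces the minimizing configurations $x_n^t=t+n\,\omega$ for all $t$, hence the invariant circle. (Both you and the paper are slightly loose about ``$\{y=\omega\}$'': the circle is the graph $y=-\partial_1 S(x,x+\omega)$, which equals $\{y=\omega\}$ only for suitably normalized $S$; your hedge ``in the coordinates used'' is the right instinct.)

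On the rational equality case your diagnosis is sharper than the paper's own treatment, which disposes of rationals with ``the claim is easy'' and writes the equality argument only for irrational $\omega$. In fact, equality at a \emph{single} $x_0$ genuinely does not imply an invariant circle at a rational, so neither of your proposed repairs can succeed in the abstract setting: for $S(x,x')=\tfrac12(x'-x)^2+V(x)$ with $V$ nonconstant, the constant configuration at a global minimum of $V$ attains $\beta(0)=\min V$ exactly (the kinetic term is nonnegative), so it is action-minimizing, yet the fixed points $(x^*,0)$, $V'(x^*)=0$, are isolated and no invariant circle of rotation number $0$ exists. Thus non-crossing cannot exclude isolated minimizing periodic orbits --- these are precisely what occurs when $\beta$ has a corner at $p/q$ --- and appealing to Schneider/Sas would be circular. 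The resolution is that only the first clause (``equality $\Rightarrow$ the configuration is minimizing'') is valid pointwise at rationals, and this is all the paper ever uses: in Theorem \ref{thm1} and in Theorems \ref{thmBirkbill}--\ref{thm4thbill} the integration step upgrades the hypothesis to $\mathcal A_\omega(x_0)=\beta(\omega)$ for \emph{every} $x_0$ before the circle is invoked. With all-$x_0$ equality the rational case is immediate: every equispaced configuration is a minimizing orbit, criticality reads $\partial_2 S(x,x+\omega)=-\partial_1 S(x+\omega,x+2\omega)$ for all $x$, and hence the continuous graph $x\mapsto -\partial_1 S(x,x+\omega)$ is an invariant circle of $(p,q)$-periodic points --- no differentiability of $\beta$ or Fourier analysis needed. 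Read the lemma's final ``in particular'' as pertaining to irrational $\omega$ (or to equality for all starting points), and your proposal then matches the paper's proof in full.
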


	\begin{proof}
	The claim is easy for rational rotation numbers.
	Let us  consider the case of $\omega$  irrational.
	Let $\{y_n\}_n$ be an action-minimizing configuration of rotation number $\omega$; observe that $|y_n - y_0 - n\, \omega| < 1$ for every $n \in \Z$ (see for example \cite[Lemma 9.1 and Corollary 10.3]{Gole}). Then, it follows from the definition of action-minimizing configuration that for every $N\geq 1$:
$$
\sum_{i=-N}^{N-1}S(y_i,y_{i+1}) \leq \sum_{i=-N}^{N-1} S(x_i,x_{i+1}) + C,
$$
where $C>0$ is a constant independent on $n$ (it keeps  into account the correction due to the fact that a-priori $x_{\pm N} \neq y_{\pm N}$). Taking the average and the limit as $N\longrightarrow +\infty$, yield inequality \eqref{eq:Abetaomega}.

Let us now focus on the equality case, {\it i.e.}, $\mathcal A_\omega(x_0)=\beta(\omega)$. 
By Birkhoff ergodic theorem, we have:
\begin{equation}\label{indepx0}
\mathcal A_\omega(x_0)=\int_{0}^{1} S(t,t+\omega)dt. 
\end{equation}
Recalling now that $\beta(\omega)$ corresponds to the minimal value of Percival Lagrangian $P_\omega$ (see above), we deduce that if
 $\mathcal A_\omega (x_0) = \beta(\omega)$,  then $\varphi(t)=t$ is  a minimizer of $P_\omega$. Then, for every 
$t \in \mathbb{R}$ we have (see \eqref{volotea}, using the fact  that $\varphi$ is continuous):
\[
x^{t}_n := \varphi (t + n\,\omega) = t+n\,\omega \qquad n\in \Z
\]
is  action-minimizer with rotation number $\omega$. In particular, it follows that for every $x_0$ the corresponding $\omega$-equispaced configuration is action-minimizing and
this completes the proof \text.

	\end{proof}

\medskip

\subsection{Toy example: integrable twist maps}	\label{toyexample}	
Let us now discuss a simple example and prove a pointwise rigidity result of Mather's $\beta$-function associated to integrable twist maps. The main goal of this example is to explain, in a very simple setting, the main ideas that we will exploit in the billiard case.\\

Let us consider a  \emph{completely integrable} symplectic twist map  of the cylinder $\T\times \R$, where $\T := \R/\Z$:
    \begin{equation}\label{ECI}
f_0 (q, p):=(q+ \h'(p), p),
\end{equation} 
where $\h:\R\rightarrow \R$ is $C^2$ and such that $\nabla\h:\R\to\R$ is  a $C^1$ diffeomorphism. It is easy to check that a generating function of \eqref{ECI} is given by $S_0(q,Q) := \ell(Q-q)$,  where $\ell'=(\h')^{-1}$.

For every $c \in \R$, $f_0$ admits an invariant curve  $\T\times \{c\}$ on which the dynamics is a rotation by $\rho:=\h'(c)$. In particular:
\begin{itemize}
\item if $\h'(c)\in \Q$, every point on $\T\times \{c\}$ is periodic;
\item if $\h'(c)\not \in \Q$, every orbit on $\T\times \{c\}$ is dense.
\end{itemize}

Each orbit is action-minimizing  and their average action is $\ell(\rho)$, where $\rho$ is the corresponding rotation number. Hence, Mather's $\beta$ function is given by
\begin{eqnarray*}
\beta_0: \R &\longrightarrow& \R\\
\rho &\longmapsto& \ell(\rho).
\end{eqnarray*}

Let us now consider a {\it perturbation of $f_0$ by a potential $V$}, namely the  twist map
\begin{equation}
f_V (q, p) :=   f_0 (q, p+ V'(q))
\end{equation}
with generating function
$S_V(q,Q):=S_0(q, Q)+ V(q) = \ell (Q-q) + V(q)$.
Observe that it is not restrictive to assume that $\int_0^1 V(q)\, dq=0$, since adding constant to $S_V$ does not change the dynamics.
We denote by $\beta_V : \R \longrightarrow \R$ the corresponding Mather's $\beta$-function.

\begin{theorem}\label{thm1}
Assume that $\int_0^1 V(q)\, dq=0$. The following inequality holds true:
	$$
	\beta_V(\rho)\leq \beta_0(\rho) \qquad \forall\; \rho\in \R.
	$$
	Moreover, if for some rotation number $\rho \in \R$ equality is achieved, then $V\equiv 0$.
\end{theorem}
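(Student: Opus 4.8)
The plan is to apply Lemma \ref{linearconfigurations} with the generating function $S_V$, testing Mather's $\beta$-function against the simplest possible competitors: the $\rho$-equispaced configurations $x_n = x_0 + n\rho$. The whole point of the toy example is that $S_V$ splits additively, so these configurations have an average action that is essentially computable by hand. Since $x_{n+1} - x_n = \rho$, I would first observe that
\[
S_V(x_n, x_{n+1}) = \ell(\rho) + V(x_n),
\]
and hence the average action of the linear configuration is
\[
\mathcal A^V_\rho(x_0) := \lim_{N\to\infty}\frac{1}{2N}\sum_{i=-N}^{N-1} S_V(x_i, x_{i+1}) = \ell(\rho) + \langle V\rangle_{x_0},
\]
where $\langle V\rangle_{x_0}$ is the Birkhoff average of $V$ along the arithmetic progression $\{x_0 + n\rho\}_{n\in\Z}$. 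Recalling $\ell(\rho) = \beta_0(\rho)$, the inequality $\beta_V \le \beta_0$ will follow from the Lemma's bound $\mathcal A^V_\rho(x_0) \ge \beta_V(\rho)$, provided I can exhibit a starting point $x_0$ with $\langle V\rangle_{x_0} \le 0$.

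For the inequality itself I would split into two cases. When $\rho$ is irrational, the sequence $\{x_0 + n\rho\}$ is equidistributed modulo $1$, so $\langle V\rangle_{x_0} = \int_0^1 V(q)\,dq = 0$ for every $x_0$, giving $\beta_V(\rho) \le \mathcal A^V_\rho(x_0) = \beta_0(\rho)$ at once. When $\rho = p/q$ is rational, the finite average $\langle V\rangle_{x_0} = \frac1q\sum_{k=0}^{q-1} V(x_0 + kp/q)$ need not vanish for a fixed phase $x_0$, but integrating in $x_0$ over $[0,1)$ gives $\int_0^1 \langle V\rangle_{x_0}\,dx_0 = \int_0^1 V = 0$; hence some $x_0$ satisfies $\langle V\rangle_{x_0}\le 0$, and the inequality follows again. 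This settles $\beta_V(\rho) \le \beta_0(\rho)$ for all $\rho \in \R$.

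For the equality case, assume $\beta_V(\rho) = \beta_0(\rho) = \ell(\rho)$. I would first upgrade this to $\mathcal A^V_\rho(x_0) = \beta_V(\rho)$ for \emph{every} $x_0$: in the irrational case this is automatic because $\langle V\rangle_{x_0} \equiv 0$, while in the rational case the Lemma's inequality forces $\langle V\rangle_{x_0}\ge 0$ for all $x_0$, and combined with $\int_0^1\langle V\rangle_{x_0}\,dx_0 = 0$ and continuity of $V$ this yields $\langle V\rangle_{x_0}\equiv 0$, hence equality. Now Lemma \ref{linearconfigurations} applies at every phase and tells me that each $\rho$-equispaced configuration is action-minimizing, in particular a critical configuration of the action functional for $S_V$. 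Writing the critical-point (Euler--Lagrange) equation $\partial_2 S_V(x_{n-1}, x_n) + \partial_1 S_V(x_n, x_{n+1}) = 0$ with $S_V(q,Q) = \ell(Q - q) + V(q)$, the two $\ell'$-terms cancel (both increments equal $\rho$) and what remains is simply $V'(x_n) = 0$. Since this holds for the configuration starting at each $x_0 \in [0,1)$, letting $x_0$ sweep the whole interval gives $V' \equiv 0$, and the normalization $\int_0^1 V = 0$ then forces $V \equiv 0$.

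The argument is short precisely because Lemma \ref{linearconfigurations} does the heavy lifting and the additive form of $S_V$ makes the linear test configurations transparent. The only genuinely delicate points I expect are the bookkeeping at rational rotation numbers --- where a single arithmetic progression need not see the mean of $V$, so one must average over the starting phase and invoke the minimality inequality rather than a direct equidistribution statement --- and the (standard) justification that ``action-minimizing'' may be read as ``critical configuration'', so that the Euler--Lagrange equation is genuinely available. I expect the rational case to be the main place where care is needed; the rest is a direct computation.
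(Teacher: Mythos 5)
Your proposal is correct and follows essentially the same route as the paper: testing $\beta_V$ against $\rho$-equispaced configurations, averaging the resulting action over the starting phase $x_0$ (your rational/irrational case split is just an unpacking of the paper's measure $\mu_{\rho,q}$), then using Lemma \ref{linearconfigurations} in the equality case so that the Euler--Lagrange equation reduces, after cancellation of the $\ell'$-terms, to $V'(x_0+k\rho)=0$ for all $x_0$ and $k$, forcing $V\equiv 0$. Your explicit upgrade of the equality to every phase $x_0$ before invoking the Lemma is a sound (slightly more careful) rendering of a step the paper leaves implicit.
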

\begin{proof}
Let $\rho \in \R$. For every $q\in \R$ let us consider the sequence starting at $q$ and of rotation number $\rho$ given by
$$
q_k := q+ k \rho \qquad k\in \N.
$$
Its average action is given by
\begin{eqnarray*}
\mathcal A^{\rho}_V(q) &:=& \lim_{N\rightarrow +\infty} \frac{1}{N}\sum_{k=0}^{N-1} S_V (q_k, q_{k+1})\\
 &=& \lim_{N\rightarrow +\infty} \frac{1}{N} \sum_{k=0}^{n-1} \left( \ell (q_{k+1}-q_k) + V(q_k) \right)\\
&=&  \ell(\rho) + \lim_{N\rightarrow +\infty} \frac{1}{N} \sum_{k=0}^{N-1} V(q + k \rho)\\
&=&  \ell(\rho) +  \int_0^1 V(z) d\mu_{\rho,q},
\end{eqnarray*}
where  
$$
\mu_{\rho,q} = \left\{
\begin{array}{ll}
{{\rm Leb}_{[0,1]}} & \mbox {if $\rho \not \in \Q$} \\
\\
\frac{1}{n} \sum_{k=0}^{n-1} \delta_{q+ k\frac mn} & \mbox {if $\rho = \frac mn \in \Q \setminus \{0\}$}\\
\\
\delta_q & \mbox{if $\rho=0$}
\end{array} \right.
$$
($\delta_z$ denotes Dirac's delta measure on the point $z$).\\

We take its average:
$$
 \int_0^1 \mathcal A^{\rho}_V(q)\, dq \;= \; \ell(\rho) +  \int_0^1 dx \int_0^{1} V(z) \, d\mu_{\rho,q} \;=\; \ell(\rho),
$$
where we have used that $\int_0^1 V(q) dq = 0$.
This implies that there exist $\bar q\in \R$ such that $A^{\rho}_V(\bar q) \leq \,\ell(\rho)$ and therefore
$$
\beta_V(\rho)  \leq \ell(\rho ) = \beta_0 (\rho).\\
$$

\medskip

Assume now that there exists $\rho_0 \in \R$ such that equality holds, {\it i.e.}, $\beta_V(\rho_0)   = \beta_0 (\rho_0)$.  Then, if $\rho$ is irrational, it follows from Lemma \ref{linearconfigurations} that every $\rho$-equispaced configuration  $\{q_k\}_k$, with $q_k = q+ k\rho$, is action minimizing. Therefore:
$$
\partial_1 S_V (q_k, q_{k+1}) + \partial_2 S_V (q_{k-1}, q_{k}) = 0 \qquad \forall k.
$$
The above equality reads:
\begin{eqnarray*}
- \ell'(q_{k+1}-q_k) + V'(q_k) + \ell'(q_{k}-q_{k-1}) = 0  & \Longleftrightarrow& 
- \ell'(\rho) + V'(q+k\rho) + \ell'(\rho) = 0\\
&\Longleftrightarrow& V'(q+k\rho) = 0 \qquad \forall\,k.
\end{eqnarray*}
Since this is true for all $q\in \R$ and for all $k$, then this implies that $V$ is constant. Since $V$ has zero average, then $V\equiv 0$ in this case.

If $\rho$ is rational, since we have established that
\[
 \beta_V (\rho) \le \int_0^1 \mathcal{A}_V^\rho (q)dq \le \beta_0(\rho)
\]
it follows that $\mathcal{A}_V^\rho (q) = \beta_V(\rho)$ for all $q \in \mathbb{R}$ and therefore all $\rho$-equispaced trajectories are minimal. By the same argument as in the case of irrational $\rho$, we infer that $V\equiv 0$ in this case as well.
\end{proof}

\begin{remark}
The above result continues to hold for higher-dimensional completely twist maps, namely maps of the form:
$$f_0 (q, p):=(q+\nabla \h(p), p),$$
where $\h:\R^d\rightarrow \R$ is $C^2$ and such that $\nabla\h:\R^d\to\R^d$ is  a $C^1$ diffeomorphism. In this case, their generating function  is given by $S_0(q,Q) := \ell(Q-q)$,  where $\nabla \ell=(\nabla h)^{-1}$.\\
\end{remark}

\section{The fab four: four models of  billiards}\label{sec:fab4}

Let $\Omega\subset\R^2$ be a strictly convex domain  with a $C^2$ oriented boundary $\partial \Omega$. Let also $O$ denote the origin of $\mathbb{R}^2$ which we assume to lie inside $\Omega$.\\

We will recall here four models of  planar billiards. See \cite{AlbersTabachnikov_Dowker} for further details.
These models differ by where the dynamics takes place, which can be either inside or outside the domain, and by the action of their periodic orbits, which is related to either the perimeter or the area of the corresponding polygon. See Table \ref{tab:billiard_classification}.

\subsection{Birkhoff billiards: Inner length billiards}\label{sec:Birkhoffbill}
{\it Birkhoff billiards} are the most classical billiards introduced and studied by Birkhoff. Birfhoff billiard map, denoted $B_\Omega$, acts on the space of oriented lines intersecting the domain $\Omega$ according to the reflection rule of geometric optics, angle of reflection equals angle of incidence (see Figure \ref{fig:Birkhoff}).  It is a twist map with the generating function \[
L(x,x')=-|\gamma(x')-\gamma(x)|.
\] 
Notice that the minus sign is introduced here in order to have a positive twist map, as described in Section \ref{sec:AubryMather}. 

\subsubsection{Non-standard generating function}

For our purposes we will need the non-standard generating function introduced in \cite{BialyMironov, BialyMironovAdvances}. More specifically: consider the coordinates $(p, \phi)$ on the space of all oriented lines, where $\phi$ is the angle between the right unit normal to the line and the horizontal axis, and $p$ is the signed distance to that line. With this setup, the space of oriented lines is identified with $T^*\mathbb{S}^1$ (where $\mathbb{S}^1 := \mathbb{R}/2\pi\mathbb{Z}$), equipped with the symplectic form $d\alpha$, where $\alpha := p\,d\phi$. In these coordinates, the boundaries of the phase cylinder where $B_\Omega$ is defined are given by:
\[
\{(p, \phi) : p = h(\phi)\} \quad {\rm and} \quad \{(p, \phi) : p = -h(\phi + \pi)\},
\]
where $h$ denotes the {\it support function} of $\gamma$ with respect to the origin. In particular, one can show that:
\begin{equation}
	\gamma (\phi) = h(\phi)\, (\cos \phi, \sin \phi) + h'(\phi)\, (- \sin \phi, \cos \phi).\\
	\label{eq:envelope}
	\end{equation}

The billiard map $B_\Omega$ is the symplectic map with generating function 
\begin{equation}
	\mathcal{S}^{\tiny\mathrm{int}}_{\Omega}(\phi_0, \phi_1) := -2\, h\left(\frac{\phi_1 + \phi_0}{2}\right)\, \sin\left(\frac{\phi_1 - \phi_0}{2}\right).
	\label{generatingfunction-birkhoff}
\end{equation}
 Notice that also here  the $-$ sign is introduced in formula (\ref{generatingfunction-birkhoff}) in order to get that  $B_\Omega$ is a positive twist map.
We denote by $\beta_\Omega$  the Mather's $\beta$-function associated to the billiard map in $\Omega$.\\

Let $q\geq 2$ and consider a  $q$-periodic trajectory  for $B_\Omega$ winding $p$ times about $\Omega$ with $p,q$ coprime. The number  $\frac pq \in \Q\cap \left(0,\frac 12\right]$ corresponds to the {rotation number} of the periodic orbit. 
It can be checked that the action of such a periodic orbit is
$$
\mathcal A_{\frac pq}=\sum_{k=0}^{{q-1}}	\mathcal{S}^{\tiny\mathrm{int}}_{\Omega}(\phi_k,\phi_{k+1})
$$
is the same for the standard and the non-standard generating function (see for instance \cite[Theorem 2]{bialy_tab_dan_reznik}) and equals the negative of the perimeter of the corresponding $q$-gon of (ordered) vertices $\gamma(\phi_k)$, $k=0, \ldots, q-1$.
So, we have
$$
\beta_\Omega\bigg(\frac pq\bigg)=-\frac1q \mathcal L^{\rm ins}_{\Omega}\bigg(\frac pq\bigg),
$$where $\mathcal L^{\rm ins}_{\Omega}$ is the maximal perimeter of the inscribed $q$-gons with winding number $p$. 
\begin{example}
For the unit disk $\mathcal D$ we have 
$\beta_\mathcal D(\rho)= -2\sin (\pi\rho)$. Indeed, any billiard trajectory inside a disk make a constant angle, say $\pi \rho$, with the tangent to the boundary. This implies that any two consecutive point of an orbit determine a segment of length $ 2 \sin(\pi \rho )$.
\label{example1}
\end{example}

\bigskip

\subsection{Outer billiards: Outer area billiards} \label{sec:outerbill}
The \textit{outer}  {\it billiard} associated to $\Omega$ is defined as follows (see Figure \ref{figure:dual_billiard}). For any point $p\in\R^2\setminus\Omega$, there are two tangent lines to $\partial\Omega$ passing through $p$. Consider the unique one that is tangent to $\partial\Omega$ at a point $q$ and such that the vector $\vec{pq}$ has the same orientation as the boundary $\partial\Omega$ at $q$. Define the image of $p$ by the outer billiard map as the point $B^{\rm out}_\Omega(p)$ on the latter tangent line $T_q\partial\Omega$ such that $q$ is the midpoint between $p$ and $B^{\rm out}_\Omega(p)$ (see Figure \ref{figure:dual_billiard}).\\

\begin{figure}[!h]
	\definecolor{uququq}{rgb}{0.25,0.25,0.25}
	\definecolor{qqqqff}{rgb}{0,0,1}
	\definecolor{xdxdff}{rgb}{0.49,0.49,1}
	\definecolor{cqcqcq}{rgb}{0.75,0.75,0.75}
	\begin{tikzpicture}[line cap=round,line join=round,>=triangle 45,x=3.0cm,y=3.0cm]
		\clip(-1.4,-0.6) rectangle (1,1.4);
		\draw [rotate around={26.57:(0,0.25)}] (0,0.25) ellipse (2.76cm and 2.19cm);
		\draw (-1.16,0.56)-- (0.67,1.31);
		\begin{scriptsize}
			\draw[color=black] (-0.47,0.05) node {$\Omega$};
			\draw[color=black] (-0.28,1.02) node {$q$};
			\fill [color=black] (-1.16,0.56) circle (1.5pt);
			\draw[color=black] (-1.26,0.65) node {$B^{\rm out}_\Omega(p)$};
			\fill [color=black] (0.67,1.31) circle (1.5pt);
			\draw[color=black] (0.71,1.36) node {$p$};
		\end{scriptsize}
	\end{tikzpicture}
	\caption{Outer billiard}
	\label{figure:dual_billiard}
\end{figure}
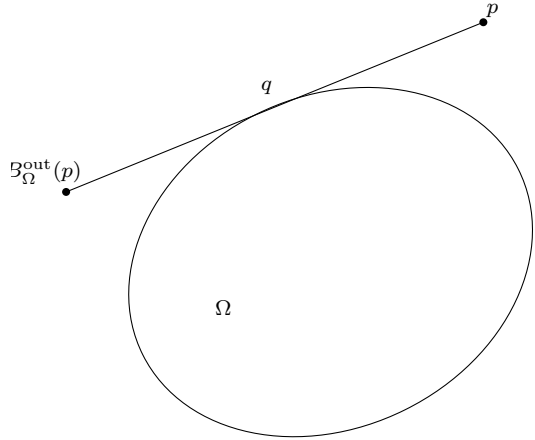

Outer billiards were introduced by Neuman in 1959 \cite{neumann} (in a paper with a rather unusual title), but an earlier construction is due to Day in \cite{day}. Moser popularized this system in the 1970's as a toy model for celestial mechanics:  in some sense, the orbit of a point around the  billiard table resembles the orbit of a celestial body around a planet or a star (see \cite{Moser, Moserbook}).\\

The map $B^{\rm out}_\Omega$ is an area preserving map of the exterior of $\Omega$ with respect to the standard area of the plane, which extends as the identity to $\partial \Omega$. The phase space is foliated by the positive tangent rays to $\partial \Omega$, and $B^{\rm out}_\Omega$ is a positive twist map  with respect to this foliation (see  \cite{boyland, Taba} for more details).
Moreover, $B^{\tiny \rm out}_\Omega$ commutes with affine transformations of the plane; namely, if $A$ is an affine transformation of the plane, then:
$$ B^{\tiny \rm out}_{A(\Omega)} \circ  A =  A \circ B^{\tiny \rm out}_\Omega.$$

\medskip

\subsubsection{Generating functions for outer billiards.}
\label{subsec:generating_function_outer}
There is a ``standard'' generating function $S$, for the outer billiard map corresponding to the so called envelope coordinates $(\lambda,\vartheta)$ \cite{boyland, gk}. Let us consider a parametrization  of $\partial \Omega$
\begin{eqnarray*}
	\gamma: \R/2\pi \Z &\longrightarrow& \R^2\\
	\vartheta &\longmapsto& \gamma(\vartheta)
\end{eqnarray*}
where $\vartheta$  denotes the angular coordinate on $\partial \Omega$, that is, the direction of the right normal of $\partial \Omega$.
Then, every point $M$ in the exterior of $\Omega$ can be uniquely written as 
$\gamma(\vartheta) +\lambda \dot\gamma(\vartheta)/|\dot\gamma (\vartheta)|$ for some $\vartheta\in \R/2\pi \Z$ and $\lambda>0$. 
In these coordinates, the outer billiard map about $\Omega$ is  a  map from the half-cylinder to itself, given by
\begin{eqnarray}\label{Boutmap}
	B^{\tiny \rm out}_\Omega:  \R/2\pi  \Z \times (0,+\infty) &\longrightarrow& \R/2\pi \Z \times (0,+\infty) \nonumber\\
	(\vartheta, \lambda) &\longmapsto& (\vartheta_1,\lambda_1)
\end{eqnarray}
where $(\vartheta_1,\lambda_1)$ is uniquely determined by the condition
$$
\gamma(\vartheta) +\lambda \dot{\gamma}(\vartheta)/|\dot\gamma(\vartheta)| = \gamma(\vartheta_1) -\lambda_1 \dot{\gamma}(\vartheta_1)/|\dot{\gamma}(\vartheta_1)|.$$

In these coordinates  $S$ can be written  by:
\begin{eqnarray*}
	S(\vartheta_0,\vartheta_1): \{(\vartheta_0,\vartheta_1)\in \R^2:\; 0<\vartheta_1-\vartheta_0<\pi\} &\longrightarrow& \R \\
	(\vartheta_0,\vartheta_1) &\longmapsto&S(\vartheta_0,\vartheta_1),
\end{eqnarray*}
where $ S(\vartheta_0,\vartheta_1)$ denotes the area of the (oriented) curvilinear triangle bounded by the tangent lines at $\gamma(\vartheta_0)$ and $\gamma(\vartheta_1)$ and $\partial \Omega$ (see \cite{boyland, DogruTabachnikov, Taba} for more details). In particular, if $(\vartheta_1, \lambda_1) =  B^{\tiny \rm out}_\Omega (\vartheta_0,\lambda_0)$ then:
$$
\partial_1 S (\vartheta_0,\vartheta_1) =  - \frac{\lambda_0^2}{2}, \qquad \partial_2 S (\vartheta_0,\vartheta_1)  = \frac {\lambda_1^2}{2}
$$
where $\partial_i$ denotes the partial derivative with respect to the $i$-th component. Thus the standard symplectic structure of $\R^2$ invariant under $B^{\tiny \rm out}_\Omega$  in these coordinates takes the form
$$
\omega=d(\lambda^2/2)\wedge d\vartheta.
$$

Moreover, the  twist condition holds (it is a positive twist map):
$$
\partial^2_{12} S(\vartheta_0,\vartheta_1)   < 0. \\
$$

\begin{figure}[h]
	\centering
	\includegraphics[width=0.4\textwidth]{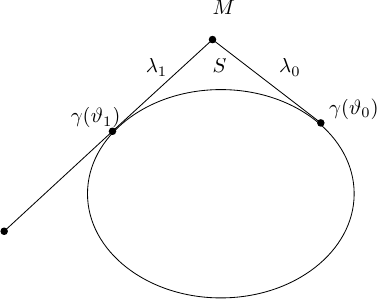}
	\caption{Envelope coordinates}
	\label{fig:outer-rule}
\end{figure}

For our purposes, another non-standard generating function, denoted here $\mathcal S^{\rm out}_\Omega$, is more convenient. This function corresponds to the symplectic polar  coordinates $(r,\phi)$ in $\mathbb R^2$, when the origin lies inside $\Omega$. This function was introduced in \cite{bialy-outer}. 

$\mathcal S^{\rm out}_\Omega(\phi_1,\phi_2)$ is defined as follows. Given the values $\phi_1<\phi_2<\phi_1+\pi$, consider the segment with the ends lying on the rays with the angles $\phi_1$ and $\phi_2$, which is tangent to the curve $\gamma$ exactly at the middle. Then $\mathcal S^{\rm out}_\Omega(\phi_1,\phi_2)$ equals the area of the triangle bounded by the two rays and the segment.

\begin{figure}[h]\label{fig:generating}
	\centering
	\includegraphics[width=0.5\linewidth]{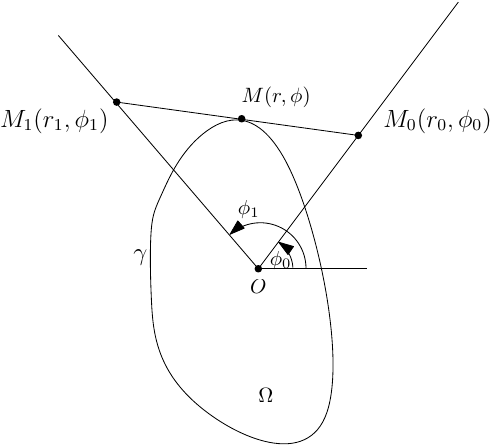}
	\caption{Generating function $\mathcal S^{\rm out}_\Omega$}
\end{figure}
The advantage of this generating function is in computing the action of a periodic orbit. 

Let $q\geq 3$ and consider a  $q$-periodic trajectory  for $B^{\rm out}_\Omega$ winding $p$ times about $\Omega$ for some  $\frac pq \in \Q\cap \left(0,\frac12\right)$, with $p,q$ coprime, that corresponds to the {rotation number} of the periodic orbit.  
Namely, for a  periodic orbit $\{M_k(r_k,\phi_k)\}_k$ of rotation number $\frac pq$, where $M_k :=  B^{\rm out}_\Omega( M_{k-1}), M_{q}=  M_0$, we can compute the action in terms of $\phi_k$, $ \phi_{q}=\phi_0+2\pi p$:  
$$
\mathcal A_{\frac pq}=\sum_{k=0}^{q-1} \mathcal S^{\rm out}_\Omega (\phi_k, \phi_{k+1}).
$$
It then follows that the {action of a periodic orbit} of rotation number $\frac pq$ equals the {area} of the corresponding circumscribed $q$-gon with (ordered) vertices $M_k$, $k=0, \ldots, q-1$. 
Then, if we denote by $\beta^{\rm out}_\Omega$ Mather's $\beta$-function associated to $B^{\rm out}_{\Omega}$,  we have
$$
\beta^{\rm out}_\Omega \left(\frac pq \right) =\frac 1q   
\mathcal A^{\rm out}_\Omega \left(\frac pq \right)\\
$$
where $\mathcal A^{\rm cir}_\Omega(\frac pq)$ denotes the maximal area of  $q$-gons with winding numbers $p$ circumscribed about $\Omega$.
\begin{example}
For the unit disk $\mathcal D$, $\beta^{\rm out}_{\mathcal D}(\rho)=\tan(\pi\rho)$. Indeed, choosing the centre as origin, one can check that the triangles with vertex $O$, $M_0(r_0,\phi_0)$ and $M(r,\phi)$ are always right and congruent. Thus, the length of the the segment $M_0(r_0,\phi_0)M_1(r_1,\phi_1)$ is constant along trajectories and equals to $2 \tan \left(\frac{\phi_1-\phi_0}2\right)$. 
\label{example2}
\end{example}

\medskip

\subsection{Symplectic billiards: Inner area billiards} \label{sec:symplbill}
{\it Symplectic billiards}  were  introduced by Albers and Tabachnikov in \cite{AlbersTabachnikov} to describe the evolution of a infinitesimally small ball inside $\Omega$, which
bounces on the boundary $\partial\Omega$ according to the following reflection law: given three successive impact points $p_1$, $p_2$ and $p_3$, the line joining $p_1p_3$ and the tangent line $T_{p_2}\partial\Omega$ of $\partial\Omega$ at $p_2$ are parallel (see Figure \ref{figsimplbill}). 
Unlike classical billiards, the reflection  law for symplectic billiards is not local. \\

\begin{figure}
	\centering
	\begin{tikzpicture}[line cap=round,line join=round,>=triangle 45,x=2.0cm,y=2.0cm]
		\clip(-2,-1.2) rectangle (2,1.35);
		\draw [rotate around={0:(0,0)}] (0,0) ellipse (2.82cm and 2cm);
		\draw [dash pattern=on 1pt off 1pt,domain=-3.19:3.33] plot(\x,{(--32-8.43*\x)/29.69});
		\draw [dash pattern=on 1pt off 1pt,domain=-3.19:3.33] plot(\x,{(-10.52-8.43*\x)/29.69});
		\draw [-latex] (1.07,-0.66) -- (0.72,0.37);
		\draw [-latex] (0.53,0.93) -- (-0.58,0.43);
		\draw (-0.58,0.43)-- (-1.41,0.05);
		\draw (0.72,0.37)-- (0.53,0.93);
		\begin{scriptsize}
			\draw[color=black] (1.15,-0.77) node {$p_1$};
			\draw[color=black] (0.56,1.02) node {$p_2$};
			\draw[color=black] (-1.5,-0.05) node {$p_3$};
			\draw[color=black] (1.5,0.8) node {$T_{p_2}\partial\Omega$};
			\draw[color=black] (0,0) node {$\Omega$};
		\end{scriptsize}
	\end{tikzpicture}
	\caption{Symplectic billiard} \label{figsimplbill}
\end{figure}
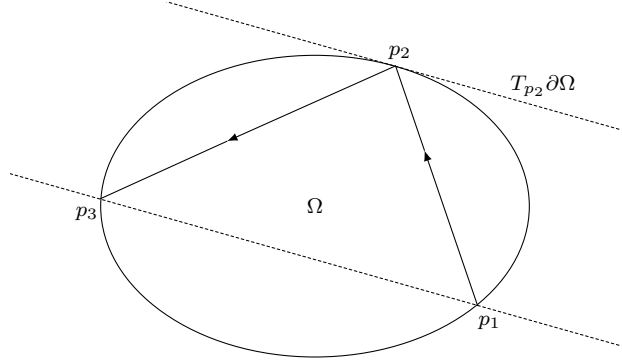

Let $\gamma: \R/ \Z \longrightarrow \R^2$ be a regular parametrization.
Given a  point $\gamma(t)$, denote by $\gamma(t^*)$  the other point on $\partial \Omega$ where the tangent line is parallel to that in $\gamma(t)$. 
According to the description above, the phase space of the symplectic billiard map in $\Omega$ is then the set of the oriented chords $\gamma(t_0) \gamma(t_1)$ where $t_0 < t_1 <t_0^*$,  according to the orientation of $\gamma$. Alternatively, the phase space can be described as the set 
$$ \mathcal X_\Omega:= \{(t_0, t_1): \; \omega(\dot \gamma(t_0), \dot \gamma (t_1)) >0\}$$ 
where $\omega$ denotes the standard area form in the plane ({\it i.e.}, the determinant of the matrix made by the two vectors). The vertical foliation consists of the chords with a fixed initial point. 
Then, the symplectic billiard map is given by:
\begin{eqnarray}\label{Bsymmap}
	B^{\tiny \rm symp}_\Omega: \mathcal X_\Omega &\longrightarrow &  \mathcal X_\Omega \nonumber\\
	(t_0,t_1) &\longmapsto& (t_1,t_2),
\end{eqnarray}
where $(t_1,t_2)$ is uniquely determined by the condition that   the tangent line to $\partial \Omega$ at $\gamma(t_1)$ is parallel to the line $\gamma(t_0)\gamma(t_2)$ (see Figure \ref{figsimplbill}). The map $B^{\tiny \rm sym}_\Omega$ is a  twist map and
it can be extended to the boundary of the phase space by continuity: $B^{\tiny \rm sym}_\Omega (t,t) := (t,t)$ and 
$B^{\tiny \rm symp}_\Omega (t,t^*) := (t^*,t)$.

\subsubsection{Generating function for symplectic billiards.}
\label{subsec:generating_function_symplectic}

Also in this case, $B^{\tiny \rm symp}_\Omega $ being a twist map, we can introduce its generating function (we refer to \cite{AlbersTabachnikov} for more details)
\begin{eqnarray*}
	\mathcal S^{\tiny \rm symp}_\Omega: \left\{ (t_0,t_1):\; t_0<t_1< t_0^*  \right\} &\longrightarrow& \R\\
	(t_0,t_1) &\longrightarrow& \mathcal S^{\tiny \rm symp}_\Omega(t_0,t_1)
\end{eqnarray*}
where $$\mathcal S^{\tiny \rm symp}_\Omega(t_0,t_1)=-\frac{1}{2}\omega (\gamma(t_0),\gamma(t_1)),$$ which is the negative of half of the area of the triangle with vertices $\gamma(t_0),\gamma(t_1)$ and $O$. Here again the minus sign is introduced in order to have a {positive} twist map.
Namely the twist condition holds:
$$
\partial^2_{12} \mathcal S^{\tiny \rm symp}_\Omega(t_0,t_1)=  - \omega(\dot \gamma(t_0), \dot \gamma(t_1))/2 <0.\\
$$

Observe that also in this case, as for outer billiards, $B^{\tiny \rm symp}_\Omega$ commutes with affine transformations of the plane.  Therefore, disks and ellipses are dynamically equivalent.

\medskip

Let us focus our attention on periodic orbits.

\noindent Let $q\geq 2$ and consider $\{(t_k,t_{k+1})\}_{k}$ a  $q$-periodic orbit  for $B^{\rm symp}_\Omega$ winding $p$ times about $\Omega$ for some  $\frac pq \in \Q\cap \left(0,\frac12\right)$, with $p,q$ coprime, that corresponds to the {rotation number} of the periodic orbit.  
We can compute the action 
$$
\mathcal{A}_{\frac pq} = 
\sum_{k=0}^{q-1} \mathcal S^{\rm symp}_\Omega (t_k, t_{k+1}).
$$
It follows   that the {action of a periodic orbit} of rotation number $\frac pq$ corresponds to the negative of the  {area} of the corresponding inscribed $q$-gon with (ordered) vertices $\gamma(t_k)$, $k=0, \ldots, q-1$. See also \cite{AlbersTabachnikov, AlbersTabachnikov_Dowker}.\\

If we denote by $\beta^{\rm symp}_\Omega$ Mather's $\beta$-function associated to $B^{\rm symp}_{\Omega}$,  we have
$$
\beta^{\rm symp}_\Omega \left(\frac pq \right) = -\frac 1q 
\mathcal A^{\rm ins}_\Omega \left(\frac pq \right),\\
$$
where  $\mathcal A^{\rm ins}_\Omega(\frac pq)$ denotes the maximal area of  $q$-gons with winding number $p$ inscribed in $\Omega$.
\begin{example}
For the unit disk $\mathcal D$, $\beta^{\rm symp}_{\mathcal D}(\rho)=-\frac 12 \sin(2\pi\rho)$.  Indeed, choosing the centre of $\mathcal D$ as the origin, and given three points of an orbit $p_1,p_2$ and $p_3$ we see that the chord $p_1p_3$ is orthogonal to the radius through $p_2$. Thus the triangles $p_1p_2O$ and $p_2p_3O$ are right and congruent, so their area is constant and equal to $\frac12 \sin \alpha$, where $\alpha$ stands for the angle between $p_1$, $O$ and $p_2$.
\label{example3}
\end{example}

\medskip

\subsection{Outer length billiards (or $4^{\rm th}$ billiards)} \label{sec:4thbill}
This model was recently introduced in \cite{AlbersTabachnikov_Dowker}.

Let us fix an arclength parametrization of $\gamma=\partial \Omega$ and let
$\ell$ denote the  length of $\gamma$.
This billiard map, denoted by $ B_{\Omega}^{4^{\rm th}}$ (since the term $4$-th billiard is a slang term for Outer length billiard) acts in the exterior of $\Omega$, according to the following rule (see Figure \ref{fig:fourth-rule}).
\begin{figure}[h]
	\centering
	\includegraphics[width=0.5\linewidth]{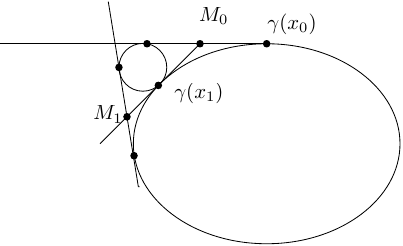}
	\caption{Outer length billiard rule}
	\label{fig:fourth-rule}
\end{figure}

On this picture, given a point $M_0$ in the exterior, consider two tangents through $M_0$ to $\gamma$ and the unique circle tangent to $\gamma$ at $\gamma(x_1)$ and to the tangent line at $\gamma(x_0)$. Then, the point 
$M_1=B_{\Omega}^{4^{\rm th}}(M_0)$ is defined as the intersection point of the tangent line at $\gamma(x_1)$ and the unique line, which is tangent to the circle and $\gamma$. We will denote the lengths $$\lambda_0:=|M_0-\gamma(x_0)|,\quad \lambda_1:=|M_0-\gamma(x_1)|.$$
One can prove \cite{AlbersTabachnikov_Dowker} that $ B_{\Omega}^{4^{\rm th}}$ is a {positive} twist map, and  $S_{\Omega}^{4^{\rm th}}(x_0,x_1)=\lambda_0+\lambda_1$ is a generating function for the outer length billiard.

Moreover, for any periodic configuration of rotation number $\frac{p}{q}\in (0, \frac 12) \cap \Q$, {\it i.e.},
$\{x_k\}_{k \in \mathbb{Z}}$, $x_{k+q}=x_k+p\ell$, we have that the action 
$$
\sum_{k=0}^{q-1}  S_{\Omega}^{4^{\rm th}}(x_k, x_{k+1}),
$$
equals the perimeter of the corresponding $q$-gon with (ordered) vertices $\{M_k\}_{k=0}^{q-1}$. Then, if we denote by $\beta_{\Omega}^{4^{\rm th}}$ Mather's $\beta$-function associated to $S_{\Omega}^{4^{\rm th}}$,  we have:
$$
\beta_{\Omega}^{4^{\rm th}} \left(\frac pq \right) = \frac 1q
\mathcal L_{\Omega}^{\rm circ}\left(\frac pq \right).\\
$$
where  $\mathcal L_{\Omega}^{\rm circ}$ denotes the minimal perimeter of  $q$-gons with winding numbers $p$ circumscribed about $\Omega$.
\begin{example}
	For the unit disk $\mathcal D$, $\beta_{\Omega}^{4^{\rm th}}(\rho)=2\tan\pi\rho$. Indeed, denote by $M$ the third vertex of the triangle circumscribing the smaller circle. Note that the triangle obtained connecting $\gamma(x_0)$, $O$ and $M$ is right and congruent to the one obtained connecting $M$, $O$ and the third tangency point in the picture. This means that the line connecting $M$ and $O$ bisects the angle at $M$ and at the same time is orthogonal to the tangent line through $\gamma(x_1)$. Thus, the length $M_0\gamma(x_1)$ is constant along orbits and equals $2 \tan (\alpha)$ where $\alpha$ is the angle between $\gamma(x_0)$, $\gamma(x_1)$ and $O$. 
	\label{example4}
\end{example}

\medskip

\section{Rigidity of Mather's $\beta$ function for billiards} \label{secrigiditybeta}

In this section we state our main results, namely, pointwise rigidity for the $\beta$-function of circular Birkhoff, symplectic and $4^{\rm th}$ billiards.
As noted in the Introduction, it is quite remarkable that the value of these functions at a single point is sufficient to determine whether the domain is a disk (or an ellipse, for symplectic billiards). 
All known results so far were {\it global}, in the sense that they required knowledge of the entire $\beta$ function or infinitely many of its values.\\

Before stating our main results, let us briefly recall these global results.\\

\subsection{Global rigidity of $\beta$ function for billiards}\label{sec:globalbeta}
Known results can be grouped into two different categories and reduce essentially to two strategies:
\begin{itemize}
\item Asymptotic expansion of the $\beta$ function at zero, seeking to identify what kind of information can be retrieved from the coefficients of this formal expansion
\item Differentiability properties of the $\beta$ function and their relation to integrability and to the Birkhoff conjecture.
\end{itemize}

\subsubsection{Asymptotic (formal) expansion at $0$}

In \cite{Siburg, SorDCDS}, properties of Mather's $\beta$  and explicit expressions for their (formal) Taylor expansions at $\omega=0$,  $\beta(\omega) \sim \sum_{k=0}^{\infty}  \beta_k \frac{\omega^k}{k!}$, have been obtained for Birkhoff billiards, and later extended to symplectic, outer and $4^{\rm th}$ billiards in \cite{BBN, baracco-bernardi-corentin}. The coefficients in these expressions involve the curvature of the boundary and its derivatives; for Birkhoff billiards, they are related to the so-called {\it Marvizi-Melrose invariants} \cite{MarviziMelrose}.

Although it seems quite a challenging task to recover the shape of the domain from the expressions of these coefficients (see for example \cite{BuKa}), some consequences can nevertheless be derived:

\begin{itemize}
\item For classical billiards (see \cite[Corollary 1]{SorDCDS} and \cite{Siburg}) the following inequality holds
$$
\beta_{3} + \pi^2 \beta_{1} \leq 0
$$
and equality holds if and only if $\Omega$ is a disk.
In particular, the above corollary says that if the first two coefficients $\beta_1$ and $\beta_3$ coincide with those of the $\beta$-function of a disk, then the domain must be a disk.

\item Similar results were later proved for other models of billiards (see \cite[Corollary 5.2]{BBN}). In particular, for outer and symplectic billiards coefficients $\beta_5$ and $\beta_7$ are sufficient to recognize whether the domain is an ellipse. One has  ($\lambda$ denotes the affine length of the boundary)
$$ 42\lambda^3 \beta^{\rm symp}_7 \leq 5! (\beta^{\rm symp}_5)^2
\qquad {\rm and} \qquad
7 \lambda^3 \beta^{\rm out}_7 \geq 170 (\beta^{\rm out}_5)^2
$$
with equality if and only if the domain is an ellipse.

\item For $4^{\rm th}$ billiards, it was proven in \cite[Corollary 7]{baracco-bernardi-corentin} that 
$$3 \beta^{4^{\rm th}}_3 + \pi^2 \beta_1^{4^{\rm th}} \leq 0$$
with equality if and only if it is a circle.

\end{itemize}

\medskip

\begin{remark}
In the case of Birkhoff and $4^{\rm th}$ billiards, a more challenging question is whether these expansions allow to recover that the domain is an  ellipse.
For Birkhoff billiards it is known that the the coefficients $\beta_1$ and $\beta_3$ determine univocally a given ellipse in the family of all ellipses (see \cite[Proposition 1]{SorDCDS}). Moreover, 
in \cite[Theorem 1]{Bialyellipses} an explicit expression of Mather's $\beta$-function for elliptic billiards has been computed in terms of elliptic integrals, which allows the author to prove that the value of $\beta$ at $\frac 14$ and its derivative at $0$ also determine univocally a given ellipse in the family of  ellipses. Similarly, one can prove that  the values of $\beta$ at $\frac 12$ and any other rational in $(0,\frac 12)$ are sufficient to determine a given ellipse amongst ellipses (see \cite[Theorem 6]{Bialyellipses}).

\end{remark}

  \subsubsection{Differentiability properties, integrability and Birkhoff conjecture}

An important result by  Mather \cite{Mather90} states that the function $\beta$ is differentiable at a rational point $\rho$ if and only if there exists an invariant curve consisting of periodic orbits with rotation number $\rho$. Moreover, all orbits lying on these invariant curves are action minimizing.

In particular, being $C^1$ on an interval implies that the twist map possesses invariant curves
for all rational rotation numbers in the interval of differentiability and, by a semi-continuity argument, one obtains invariant curves also for irrational rotation numbers. It is then possible to prove that these curves  foliate an open set of the phase space (a form of $C^0$-integrability). In the case of billiards, this observation allows one to translate many recent results on the Birkhoff conjecture in terms of rigidity properties of the $\beta$-functions.

\begin{itemize}
\item \textit{(Global differentiability)}. If $\beta$ is differentiable on the whole domain of definition $(0,1/2]$, then the corresponding billiard map is globally integrable. For Birkhoff billiards, it follows from \cite{Bialy} that the domain is  a disk. The result also holds for symplectic and outer billiards (in this case the domain can be also an ellipse); see, respectively, \cite{BaraccoBernardi} and \cite{bialy-outer}.

\item \textit{(Centrally symmetric case)}. It follows from the results in \cite{BialyMironov} (for classical billiards) and in \cite{BBN2} (for symplectic billiards) that if $\Omega$ is in addition centrally symmetric and the associated  $\beta$ function is differentiable on $(0,1/4]$, then it is an ellipse. 

\item \textit{(Perturbative case)}. It follows from the results in \cite{ADK, KS} (for classical billiards) and in \cite{Tsodikovich} (for symplectic billiards) that if $\Omega$ is a smooth strictly convex domain sufficiently $C^1$-close to an ellipse and its Mather's $\beta$-function is differentiable at all rationals $1/q$ with $q\geq 3$, then $\Omega$ must be an ellipse. \\
For classical billiards, this result could be actually generalized using \cite{Koval} (see also \cite{HKS}), considering integrability near the boundary. In particular, this would allow to consider $\beta$ only in a neighborhood of $0$.

\end{itemize}

\medskip

\subsection{Pointwise rigidity of $\beta$ for billiards: Main results}\label{sec:mainresults}

Let us state our main results.  \\

\begin{theorem}[{\bf Birkhoff billiards}] \label{thmBirkbill} \phantom{}	
\begin{itemize}
\item[{\bf (i)}] The following inequality holds:
	\begin{equation}\label{ineqbetabirkhoff}
	\beta_\Omega (\rho) \leq  \frac{|\partial \Omega|}{2\pi} \beta_{\D}(\rho) \qquad \forall\; \rho \in \Big[0,\frac 12\Big],
	\end{equation}
	where $|\partial \Omega|$ denotes the perimeter of $\Omega$ and $\D$  the unit disk.
\item[{\bf (ii)}]If equality   is achieved in \eqref{ineqbetabirkhoff} at some $\rho$, then the billiard map admits an invariant curve with constant angle of reflection equal to $ \pi \rho$  (it corresponds to a so-called  {\it Gutkin billiard}). In particular, if equality is achieved at $\frac 12$, then $\Omega$ is a constant width domain.
\item[{\bf (iii)}] There exist an explicit set $\mathcal R \supset \Q \cap \left(0,\frac{1}{2}\right)$, whose complement is countable and dense in $\left(0, \frac{1}{2}\right]$, such that if equality   is achieved  in \eqref{ineqbetabirkhoff} at some  $\rho \in \mathcal R$, then $\Omega$ is a disk.\\
\end{itemize}
\end{theorem}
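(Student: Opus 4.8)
The plan is to run, in the support–function coordinate, the same averaging scheme as in the toy model (Theorem \ref{thm1}), using the non-standard generating function \eqref{generatingfunction-birkhoff}. For a fixed $\rho$ I would test $\beta_\Omega$ against the $\rho$-equispaced configurations $\varphi_k=\varphi_0+2\pi\rho\,k$ in the angular variable. Since
\[
\mathcal S^{\rm int}_\Omega(\varphi_k,\varphi_{k+1})=-2\sin(\pi\rho)\,h\!\left(\varphi_0+\pi\rho+2\pi\rho\,k\right),
\]
the average action $\mathcal A_\rho(\varphi_0)$ equals $-2\sin(\pi\rho)$ times the Birkhoff average of $h$ along the orbit. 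Averaging over the starting phase $\varphi_0$ exactly as in Theorem \ref{thm1} (Weyl equidistribution for irrational $\rho$, the explicit rational invariant measure otherwise) gives $\int_0^1\mathcal A_\rho(\varphi_0)\,d\varphi_0=-2\sin(\pi\rho)\cdot\frac{1}{2\pi}\int_0^{2\pi}h\,d\varphi=\frac{|\partial\Omega|}{2\pi}\beta_\D(\rho)$, because $\int_0^{2\pi}h\,d\varphi=|\partial\Omega|$ and $\beta_\D(\rho)=-2\sin(\pi\rho)$ by Example \ref{example1}. As Lemma \ref{linearconfigurations} yields $\mathcal A_\rho(\varphi_0)\ge\beta_\Omega(\rho)$ for every $\varphi_0$, integrating produces \eqref{ineqbetabirkhoff} and proves (i).

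For (ii) I would use that in the equality case the nonnegative continuous function $\varphi_0\mapsto\mathcal A_\rho(\varphi_0)-\beta_\Omega(\rho)$ has zero average, hence vanishes identically; by Lemma \ref{linearconfigurations} every equispaced configuration is then action-minimizing and these sweep out an invariant curve of rotation number $\rho$. Writing the stationarity (reflection) equation $\partial_2\mathcal S^{\rm int}_\Omega(\varphi_{k-1},\varphi_k)+\partial_1\mathcal S^{\rm int}_\Omega(\varphi_k,\varphi_{k+1})=0$ for such a configuration and inserting \eqref{generatingfunction-birkhoff} reduces, after simplification, to the delay–advance functional equation
\[
\sin(\pi\rho)\big[h'(\psi-\pi\rho)+h'(\psi+\pi\rho)\big]=\cos(\pi\rho)\big[h(\psi+\pi\rho)-h(\psi-\pi\rho)\big],\qquad \psi\in\R.
\]
This is precisely the condition that the invariant curve consists of chords meeting $\partial\Omega$ at the constant angle $\pi\rho$, i.e.\ that $\Omega$ is a Gutkin billiard (cf.\ Remark \ref{remthmbirkhoff}). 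At $\rho=\tfrac12$ it degenerates to $h'(\psi)+h'(\psi+\pi)\equiv 0$, that is $h(\psi)+h(\psi+\pi)$ constant, which is exactly the constant-width condition.

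For (iii) I would expand $h(\psi)=\sum_n c_n e^{in\psi}$; the functional equation then forces, mode by mode, $c_n\big[n\tan(\pi\rho)-\tan(n\pi\rho)\big]=0$. The modes $n\in\{0,\pm1\}$ satisfy the bracket identically and encode only the perimeter ($c_0=|\partial\Omega|/2\pi$) and the position of the origin, not the shape. Hence, with $\mathcal R:=\{\rho:\ n\tan(\pi\rho)\ne\tan(n\pi\rho)\ \text{for all }|n|\ge 2\}$ as in \eqref{defR}, any $\rho\in\mathcal R$ forces $c_n=0$ for all $|n|\ge 2$, so $h(\psi)=c_0+A\cos\psi+B\sin\psi$ is the support function of a disk. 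That $\mathcal R^{c}$ is countable follows since for each fixed $n$ the real-analytic map $\rho\mapsto\tan(n\pi\rho)-n\tan(\pi\rho)$ is not identically zero and so has finitely many zeros on $(0,\tfrac12)$; density of $\mathcal R^{c}$ follows because on any subinterval, for $n$ large the branch of $\tan(n\pi\rho)$ runs from $-\infty$ to $+\infty$ and must cross the slowly varying $n\tan(\pi\rho)$.

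The main obstacle is establishing the inclusion $\Q\cap(0,\tfrac12)\subset\mathcal R$, namely that the Gutkin equation $\tan(n\pi\rho)=n\tan(\pi\rho)$ admits \emph{no} rational solution with $|n|\ge 2$. I expect to handle this either by a cyclotomic/Galois (or Niven-type) argument showing that $\tan(\pi p/q)$ and $\tan(n\pi p/q)$ cannot satisfy this algebraic relation, or by by-passing it entirely for rational $\rho$: there equality in \eqref{ineqbetabirkhoff} is equality in the classical Schneider inequality \eqref{insperimeter}, whose rigidity already forces $\Omega$ to be a disk (the extension to winding number $p>1$ being the technical point to verify). The secondary delicate steps are the rigorous identification of the functional equation above with the constant-angle (Gutkin) property and the density statement, both of which I regard as more routine than the rational exclusion.
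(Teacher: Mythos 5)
Your proposal is correct and follows essentially the same route as the paper: equispaced configurations in the support-function angle, the computation $\mathcal S^{\rm int}_\Omega(\varphi_k,\varphi_{k+1})=-2\sin(\pi\rho)\,h(\varphi_0+\pi\rho+2\pi\rho k)$, averaging over the starting phase together with Lemma \ref{linearconfigurations} to get \eqref{ineqbetabirkhoff} (the paper does rationals plus continuity where you treat all $\rho$ at once via equidistribution, an immaterial difference), then the stationarity equation, the Fourier-mode condition $c_n\bigl[n\tan(\pi\rho)-\tan(n\pi\rho)\bigr]=0$, and the set $\mathcal R$ of \eqref{defR}, with the $\rho=\tfrac12$ degeneration to constant width handled identically.

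The one point you flag as the ``main obstacle''---that $\tan(n\pi\rho)=n\tan(\pi\rho)$ has no rational solution $\rho\in\Q\cap(0,\tfrac12)$ with $|n|\ge 2$---is not proved in the paper either: it is a known theorem, cited as \cite[Theorem 1]{Cyr} (see also \cite{FlorianPrachar}), and the countability and density of $\bigl(0,\tfrac12\bigr)\setminus\mathcal R$ are likewise quoted from \cite{Gutkin}. So you should simply cite these rather than attempt a cyclotomic/Galois proof, which is a genuine research-level result. Be warned that your proposed bypass through the equality case of Schneider's inequality \eqref{insperimeter} is circular in substance: Schneider settled that equality case only partially, and its completion by Florian--Prachar \emph{is} precisely the Diophantine statement about $\tan(k\pi/m)=k\tan(\pi/m)$ that you are trying to avoid; moreover that route would anyway require the extension to winding numbers $p>1$, which is exactly what Cyr's general result supplies. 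With those citations inserted, your argument coincides with the paper's proof.
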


\begin{remark}\label{remthmbirkhoff}
{\bf (i)} Notice that inequality (\ref{ineqbetabirkhoff}) for rotation number $\rho= \frac1n$, $n\geq 3$, implies \eqref{insperimeter} in Section \ref{seccomparison}, 
using that  $\beta_{\Omega}(\frac1n)=-\frac 1n \mathcal L^{\rm ins}\big(n)$  and Example \ref{example1}.\\
{\bf (ii)} 
Gutkin in \cite{Gutkin}, in relation to a floating problem for convex bodies, investigated the existence of regular, convex billiard tables that admit a caustic corresponding to billiard trajectories with a constant angle of reflection $\pi\delta$  which clearly include disks and constant-width domains. We call them {\it Gutkin's billiards}. Gutkin \cite{Gutkin} showed showed that if $\delta\in (0,\frac 12) \cap \Q $, then circular billiards are the only Gutkin's billiards. However, for irrational $\delta$ satisfying the equation $\tan (n\pi \delta) = n\, \tan (\pi \delta) $ for some $n\geq 2$, he constructed a real-analytic family of (dynamically non-equivalent) non-circular domains with such a property.  The collection of such $\delta$'s is a countable dense subset of $\left(0,\frac12\right)$ and corresponds to $(0,\frac 12) \setminus \mathcal R$. 
\end{remark}

\medskip

\begin{theorem}[{\bf Symplectic billiards}]	\label{thmsymplbill} \phantom{}
\begin{itemize}
\item[{\bf (i)}] The following inequality holds:
	\begin{equation}\label{ineqbetasympl}
	\beta^{\tiny \rm symp}_\Omega (\rho) \leq  \frac{|\Omega|}{\pi} \beta^{\tiny \rm symp}_{\D}(\rho) \qquad \forall\; \rho \in \Big[0,\frac 12\Big],
	\end{equation}
	where $|\Omega|$ denotes the area of $\Omega$ and $\D$ the unit disk. 
\item[{\bf (ii)}] If for some rotation number $\rho \in \left(0, \frac 12\right)$ equality is achieved in \eqref{ineqbetasympl}, then $\Omega$ is an ellipse.
\end{itemize}
\end{theorem}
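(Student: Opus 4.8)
The plan is to mirror the strategy of the toy model (Theorem \ref{thm1}) and of the Birkhoff case: by Lemma \ref{linearconfigurations}, for \emph{any} parametrization of $\partial\Omega$ the average action $\mathcal A_\rho(t_0)$ of the $\rho$-equispaced configuration $t_k=t_0+k\rho$ is an upper bound for $\beta^{\mathrm{symp}}_\Omega(\rho)$. Recalling Example \ref{example3}, the target value is $\frac{|\Omega|}{\pi}\beta^{\mathrm{symp}}_{\D}(\rho)=-\frac{|\Omega|}{2\pi}\sin(2\pi\rho)$, so it suffices to exhibit, for each $\rho$, a parametrization and a starting point $t_0$ with $\mathcal A_\rho(t_0)\le -\frac{|\Omega|}{2\pi}\sin(2\pi\rho)$. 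For irrational $\rho$ the Birkhoff ergodic theorem makes $\mathcal A_\rho(t_0)$ constant and equal to $-\tfrac12\int_0^1\omega(\gamma(t),\gamma(t+\rho))\,dt$, while for rational $\rho$ the same integral is the average of $\mathcal A_\rho(t_0)$ over $t_0$; in both cases it is enough to bound this single integral.

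The crucial point is the choice of parametrization. Writing $\gamma=z$ as a complex-valued function and expanding $z(t)=\sum_n c_n e^{2\pi i n t}$, a direct Fourier computation gives $\int_0^1\omega(\gamma(t),\gamma(t+\rho))\,dt=\sum_n|c_n|^2\sin(2\pi n\rho)$ and $|\Omega|=\pi\sum_n n|c_n|^2$, so the desired bound reduces to $\sum_n|c_n|^2\big(\sin(2\pi n\rho)-n\sin(2\pi\rho)\big)\ge 0$. I would parametrize $\partial\Omega$ by the boundary values of the \emph{exterior} Riemann map $\Phi$, the conformal map of $\{|w|>1\}$ onto $\R^2\setminus\overline\Omega$ fixing $\infty$: since $\Phi$ has a simple pole at $\infty$ and is otherwise holomorphic there, its boundary parametrization has Fourier modes supported on $n\le 1$, i.e. $c_n=0$ for all $n\ge 2$. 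With this choice the $n=0,1$ terms vanish and every remaining term ($n=-m$, $m\ge1$) equals $|c_{-m}|^2\big(m\sin(2\pi\rho)-\sin(2\pi m\rho)\big)\ge 0$ by the elementary inequality $\sin(m\theta)\le|\sin(m\theta)|\le m|\sin\theta|=m\sin\theta$ valid for $\theta=2\pi\rho\in(0,\pi)$. This yields part (i); the endpoint $\rho=\tfrac12$ follows by continuity of $\beta$, and $\rho=0$ is trivial.

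I expect the main obstacle to be precisely the justification of this parametrization. One must check that for a $C^2$ strictly convex $\Omega$ the exterior conformal map extends to an orientation-preserving diffeomorphism of the boundary (Kellogg--Warschawski regularity), that the symplectic billiard map expressed in this non-standard coordinate is still a positive twist map (so that Lemma \ref{linearconfigurations} applies, which holds since $\partial^2_{12}\mathcal S^{\mathrm{symp}}=-\tfrac12\omega(\dot\gamma(t_0),\dot\gamma(t_1))<0$ survives orientation-preserving reparametrization), and that the $\rho$-equispaced configurations remain admissible up to rotation number $\tfrac12$. It should also be stressed that $\beta^{\mathrm{symp}}_\Omega$ and the rotation number are intrinsic, so computing the bound in this convenient parametrization is legitimate.

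For the equality case (ii), suppose equality holds at some $\rho_0\in(0,\tfrac12)$. Then the chain $\beta^{\mathrm{symp}}_\Omega(\rho_0)\le\mathcal A_{\rho_0}(t_0)\le-\tfrac{|\Omega|}{2\pi}\sin(2\pi\rho_0)=\beta^{\mathrm{symp}}_\Omega(\rho_0)$ collapses to equalities, forcing $\sum_{m\ge1}|c_{-m}|^2\big(m\sin(2\pi\rho_0)-\sin(2\pi m\rho_0)\big)=0$. Since for $\rho_0\in(0,\tfrac12)$ and $m\ge 2$ the inequality $\sin(2\pi m\rho_0)<m\sin(2\pi\rho_0)$ is \emph{strict} (equality in $|\sin m\theta|\le m|\sin\theta|$ would force $\sin\theta=0$), each such bracket is positive and hence $c_{-m}=0$ for all $m\ge 2$. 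Together with $c_n=0$ for $n\ge2$ this leaves $z(t)=c_{-1}e^{-2\pi i t}+c_0+c_1e^{2\pi i t}$, the parametrization of an ellipse; this explains why $\rho_0=\tfrac12$ must be excluded, as there all brackets vanish identically. Finally, Lemma \ref{linearconfigurations} additionally produces an invariant circle of rotation number $\rho_0$ made of the equispaced orbits, consistent with the affine equivalence of ellipses and the disk.
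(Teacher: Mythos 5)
Your proposal is correct, and it takes a genuinely different route from the paper's. The paper proves part (i) via Sas's chord-based parametrization $\gamma(t)=(\cos t,\,e(t)\sin t)$ (largest chord normalized to length $2$): only the $x$-component is a pure first harmonic, the averaged action of equispaced configurations is computed for rational $p/q$, Gauss--Green gives $\int_0^{2\pi}e(t)\sin^2 t\,dt=|\Omega|$, and the inequality extends to irrationals by continuity of $\beta$; for part (ii) it invokes Lemma \ref{linearconfigurations} to promote equispaced configurations to actual orbits, derives the delay relation $\gamma'(t)\parallel\gamma(t+2\pi\rho)-\gamma(t-2\pi\rho)$, pins the proportionality factor from the $x$-component, and then Fourier-analyzes the $y$-component, reducing to the fact that $\sin(2\pi n\rho)=n\sin(2\pi\rho)$ has only the trivial solutions $n=0,\pm 1$ (cited to \cite[Lemma 4.3]{bialy-tsodikovich}). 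You instead parametrize by the boundary trace of the exterior Riemann map, killing all Fourier modes $n\ge 2$; this turns the inequality into termwise positivity $m\sin\theta-\sin(m\theta)\ge 0$ for $\theta=2\pi\rho\in(0,\pi)$ (your identity $|\Omega|=\pi\sum_n n|c_n|^2$ is Gronwall's area theorem in disguise), handles irrational $\rho$ directly by unique ergodicity instead of a density argument, and --- most notably --- makes the equality case purely variational: the collapse of the averaged action forces $c_{-m}=0$ for all $m\ge 2$ outright, with no need to know that the equispaced configurations are orbits, so you bypass the Percival--Lagrangian/orbit step of Lemma \ref{linearconfigurations} entirely while resting on the same arithmetic fact about $\sin(m\theta)=m\sin\theta$. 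The trade-offs: your route imports conformal regularity (Kellogg--Warschawski, which you correctly flag as the main thing to verify), while the paper's stays elementary (Gauss--Green only), avoids complex analysis, and yields as a byproduct the dynamical information that the equispaced circles are invariant; conversely your parametrization avoids the mild smoothness awkwardness of Sas's $e(t)$ at the chord endpoints $t=0,\pi$. Two small points of hygiene: at rational $\rho_0$ your displayed chain $\beta\le\mathcal A_{\rho_0}(t_0)\le -\tfrac{|\Omega|}{2\pi}\sin(2\pi\rho_0)$ is not valid pointwise in $t_0$ (the upper bound holds only for the $t_0$-average), so the squeeze should be run through $\int_0^1\mathcal A_{\rho_0}(t_0)\,dt_0$ exactly as in your part (i) --- clearly what you intend; and the admissibility of equispaced configurations in the chosen coordinates is glossed at the same level in the paper's own proof, so beyond the caveat you already raised there is nothing to repair.
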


\begin{remark}
Notice that inequality \eqref{ineqbetasympl} for rotation numer $\rho= \frac1n$, $n\geq 3$, implies \eqref{insarea} in Section \ref{seccomparison}, using that $
\beta^{\rm symp}_\Omega \left(\frac 1n \right) = - \frac 1n
\mathcal A^{\rm ins}_\Omega \left(n\right)$ and  Example \ref{example3}.
\end{remark}

\medskip

\begin{theorem}[{\bf $4^{\rm th}$ billiards}] \label{thm4thbill} \phantom{}
	\begin{itemize}
		\item[{\bf (i)}] The following inequality holds:
		\begin{equation}\label{ineqbeta4}
				\beta^{4^{\rm th}}_{\Omega} (\rho) \leq  \frac{|\partial {\Omega}|}{2\pi} \beta^{4^{\rm th}}_{\D}(\rho), \qquad \forall\; \rho\in \Big[0,\frac 12\Big),
		\end{equation}
		where $|\partial \Omega|$ denotes the perimeter of $\Omega$ and $\D$ the unit disk. 
		\item[{\bf (ii)}] If for some rotation number $\rho \in \left(0, \frac 12\right)$ equality is achieved in \eqref{ineqbeta4}, then $\Omega$ is an disk.
	\end{itemize}	
\end{theorem}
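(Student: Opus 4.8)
The plan is to mirror the strategy of Theorem \ref{thm1}: the crux is a change of variables that turns the outer-length generating function into a tractable closed form, after which the averaging argument and the equality analysis proceed as in the model case. First I would reparametrize $\partial\Omega$ by the angle $\varphi$ of its outer normal (a legitimate circle coordinate since $\Omega$ is strictly convex, and one under which the rotation number is preserved) and rewrite the generating function in terms of the support function $h$. Using \eqref{eq:envelope}, a direct computation of the two tangent-segment lengths $\lambda_0,\lambda_1$ issuing from the vertex $M$ (the intersection of the tangent lines at normal angles $\varphi_0<\varphi_1<\varphi_0+\pi$) gives
$$
S^{4^{\rm th}}_{\Omega}(\varphi_0,\varphi_1)=\big(h(\varphi_0)+h(\varphi_1)\big)\tan\!\Big(\tfrac{\varphi_1-\varphi_0}{2}\Big)+h'(\varphi_1)-h'(\varphi_0).
$$
Since $h'(\varphi_1)-h'(\varphi_0)$ is a periodic, telescoping coboundary, it affects neither the dynamics nor the average action, so I may replace the generating function by
$$
\widetilde S(\varphi_0,\varphi_1):=\big(h(\varphi_0)+h(\varphi_1)\big)\tan\!\Big(\tfrac{\varphi_1-\varphi_0}{2}\Big),
$$
which for the unit disk ($h\equiv1$) reduces to $2\tan(\pi\rho)$ on a $\rho$-equispaced configuration, consistent with Example \ref{example4}. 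Producing this clean form is, I expect, the main obstacle; the rest follows the established pattern.

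For part \textbf{(i)}, fix $\rho\in[0,\tfrac12)$, set $\Delta=2\pi\rho$, and take the equispaced configuration $\varphi_k=\varphi_0+k\Delta$, so that $\widetilde S(\varphi_k,\varphi_{k+1})=\big(h(\varphi_k)+h(\varphi_{k+1})\big)\tan(\pi\rho)$. By Weyl equidistribution (for irrational $\rho$) or by averaging the periodic $q$-point orbit over $\varphi_0$ (for $\rho=p/q$), the average action $\mathcal A_\rho(\varphi_0)$ satisfies
$$
\frac{1}{2\pi}\int_0^{2\pi}\mathcal A_\rho(\varphi_0)\,d\varphi_0=2\tan(\pi\rho)\cdot\frac{1}{2\pi}\int_0^{2\pi}h(\varphi)\,d\varphi=\frac{|\partial\Omega|}{2\pi}\,\beta^{4^{\rm th}}_{\D}(\rho),
$$
using Cauchy's formula $\int_0^{2\pi}h\,d\varphi=|\partial\Omega|$ and $\beta^{4^{\rm th}}_{\D}(\rho)=2\tan(\pi\rho)$. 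Hence some starting point has $\mathcal A_\rho(\varphi_0)\le\frac{|\partial\Omega|}{2\pi}\beta^{4^{\rm th}}_{\D}(\rho)$, and Lemma \ref{linearconfigurations} yields \eqref{ineqbeta4}.

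For part \textbf{(ii)}, suppose equality holds at some $\rho\in(0,\tfrac12)$. Since $\mathcal A_\rho(\varphi_0)\ge\beta^{4^{\rm th}}_\Omega(\rho)$ for every $\varphi_0$ by Lemma \ref{linearconfigurations}, and its mean equals $\beta^{4^{\rm th}}_\Omega(\rho)$, the nonnegative continuous function $\mathcal A_\rho(\cdot)-\beta^{4^{\rm th}}_\Omega(\rho)$ vanishes identically; thus every $\rho$-equispaced configuration is action-minimizing and these fill an invariant circle. Writing the Euler--Lagrange equation $\partial_1\widetilde S(\varphi,\varphi+\Delta)+\partial_2\widetilde S(\varphi-\Delta,\varphi)=0$ for these configurations and simplifying (with $\tfrac{4\tan(\pi\rho)}{\sec^2(\pi\rho)}=2\sin\Delta$) produces the functional equation
$$
h(\varphi+\Delta)-h(\varphi-\Delta)=2\sin(\Delta)\,h'(\varphi)\qquad\forall\,\varphi,
$$
which, expanding $h=\sum_n c_n e^{\I n\varphi}$, forces $c_n\big(\sin(n\Delta)-n\sin\Delta\big)=0$ for all $n$.

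Finally I would invoke the strict inequality $|\sin(n\Delta)|<n\,\sin\Delta$, valid for every integer $|n|\ge2$ whenever $\sin\Delta\ne0$ (hence for all $\rho\in(0,\tfrac12)$), proved by the induction $|\sin((n+1)x)|\le|\sin(nx)||\cos x|+|\cos(nx)||\sin x|$ with equality only when $\sin x=0$. This annihilates all modes with $|n|\ge2$, leaving $h(\varphi)=c_0+a\cos\varphi+b\sin\varphi$, which is precisely the support function of a disk of radius $c_0$ centered at $(a,b)$; hence $\Omega$ is a disk. I emphasize that the appearance of $\sin$ here, rather than the $\tan$ governing Gutkin's condition in the Birkhoff case (cf.\ Remark \ref{remthmbirkhoff}), is exactly what eliminates the exceptional irrational rotation numbers and allows the conclusion for \emph{every} $\rho\in(0,\tfrac12)$.
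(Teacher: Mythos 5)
Your proposal is correct and follows essentially the same route as the paper's proof: the same support-function formula for $S^{4^{\rm th}}_{\Omega}$ (the paper's Lemma \ref{lemma:lambdas}), the same averaging of equispaced configurations using $\int_0^{2\pi}h\,d\varphi=|\partial\Omega|$, and in the equality case the same functional equation $h(\varphi+\Delta)-h(\varphi-\Delta)=2\sin(\Delta)\,h'(\varphi)$ resolved by Fourier modes via $\sin(n\Delta)=n\sin\Delta\Rightarrow n\in\{0,\pm1\}$. The only cosmetic differences are that you obtain this equation from the Euler--Lagrange condition for the coboundary-cleaned generating function rather than the paper's geometric isosceles-triangle observation $\lambda_0(M_1)=\lambda_1(M_0)$, you treat irrational $\rho$ directly by equidistribution instead of by continuity from rationals, and you prove the sine inequality by induction instead of citing \cite[Lemma 4.3]{bialy-tsodikovich}.
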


\medskip

\begin{remark}
Notice that inequality \eqref{ineqbetasympl} for rotation numer $\rho= \frac1n$, $n\geq 3$, implies \eqref{circperimeter} in Section \ref{seccomparison}, using that 
$\beta^{4^{\rm th}}_\Omega \left(\frac 1n \right) =  \frac 1n \mathcal L^{\rm circ}_{\Omega} \left(n\right)$ and  Example \ref{example4}.
\end{remark}

\bigskip

\subsection{Proof of Theorem \ref{thmBirkbill} (Birkhoff billiards)}

\begin{proof} 
{\bf (i)} Fix $\frac{p}{q} \in\left(0,\frac 12\right]\cap \mathbb{Q}$, where $p$ and $q$ are coprime positive integers. For every $\varphi \in \mathbb{S}^1$, consider a periodic configuration of rotation number $\frac{p}{q}$ starting at $\varphi$:
\[
\{\phi_k(\varphi)\}_{k\in \mathbb{Z}}, \quad \text{where} \quad \phi_k(\varphi) := \varphi + 2\pi \frac{p}{q} k.
\]
Its action is given by:
\begin{align*}
\mathcal{A}_{\frac{p}{q}}(\varphi) &= \sum_{k=0}^{q-1} S(\phi_k, \phi_{k+1}) = -2 \sum_{k=0}^{q-1} h\left( \frac{\phi_{k+1} + \phi_k}{2} \right) \sin\left( \frac{\phi_{k+1} - \phi_k}{2} \right) \\
&= -2 \sum_{k=0}^{q-1} h\left( \varphi +  (2k+1)\frac{ \pi p}{q} \right) \sin\left(  \frac{\pi p}{q} \right).
\end{align*}
Therefore, we have the inequality for every $\varphi$:$$ \frac 1q \mathcal A_{\frac{p}{q}}(\varphi)=
 -\frac 2q \sum_{k=0}^{q-1} h\left( \varphi +  (2k+1)\frac{ \pi p}{q}\right) \sin\left(  \frac{\pi p}{q} \right)\geq\beta_\Omega\left( \frac{p}{q} \right).
$$
Integrating with respect to $\varphi$, we have:
\begin{align*}
 \frac 1q\int_0^{2\pi} \mathcal{A}_{\frac{p}{q}}(\varphi) \,d\varphi &=-\frac{2}{q}\sin\left(  \frac{\pi p}{q} \right) \sum_{k=0}^{p-1} \int_0^{2\pi} h\left( \varphi + (2k+1)\frac{ \pi p}{q}\right) \,d\varphi \\
	&=-2  \sin\left(  \frac{\pi p}{q} \right) \int_0^{2\pi} h(\varphi) \,d\varphi \\
	&= -2\sin\left(  \frac{\pi p}{q} \right) |\partial\Omega|\geq 2\pi\beta_\Omega\left( \frac{p}{q} \right),
\end{align*}
where $|\partial\Omega|$ denotes the length of the boundary of $\Omega$ and 
we used the fact that $\int_0^{2\pi} h(\varphi) \,d\varphi = |\partial\Omega|$ (see for instance \cite{support_function}).
Using  $\beta_{\mathcal D}\left(\frac pq\right) = - 2\sin(\frac{\pi p}{q})$ (see Example \ref{example1}) the last inequality reads:

$$\beta_{\mathcal D}\left(\frac pq\right) |\partial\Omega|\geq 2\pi\beta_\Omega\left( \frac{p}{q} \right).
$$

Now,using the fact that $\beta$-function is continuous and the density of rationals, we can extend this inequality to all rotation numbers:
$$
\beta_\Omega (\rho) \leq \frac{|\partial\Omega|}{2 \pi} \beta_{\D}(\rho) \qquad \forall\; \rho\in \Big[0,\frac 12\Big].
$$
which is exactly the inequality we aim to prove.\\

\noindent {\bf (ii) \& (iii)} Assume now that equality holds at some $\rho \in (0,1/2]$ (clearly the result holds for $\rho=0$).
It follows (proceeding as in the proof of Theorem \ref{thm1} and Lemma \ref{linearconfigurations}) that every $2\pi\rho$-equispaced  configuration $\phi_k:=\varphi + 2\pi k \rho$, $k\in \Z$, corresponds to an orbit of $B_\Omega$ for every $\varphi\in [0,2\pi)$.

In particular, the corresponding billiard trajectory has a constant angle of reflection:
\[
\frac{\phi_{k+1} - \phi_k}{2} \equiv \pi \rho.
\]
Hence, it is a Gutkin billiard (see \cite{Gutkin} and Remark \ref{remthmbirkhoff} (ii)).
This means that:
$$
\partial_1 S(\phi_k, \phi_{k+1}) + \partial_2 S(\phi_{k-1}, \phi_k) = 0 \qquad \forall\; k\in \Z\quad {\rm and} \quad \forall \varphi\in [0,2\pi),
$$
which implies (see (\ref{generatingfunction-birkhoff})) that
$$
\left[h'(\varphi + 2\pi \rho) + h'(\varphi)\right] \,\sin (\pi \rho) -
\left[h(\varphi + 2\pi \rho) - h(\varphi)\right] \,\cos (\pi \rho) = 0\qquad \forall\; \varphi\in [0,2\pi).
$$

\medskip
If we consider the Fourier series of $h$, $h(\varphi):= \sum_{n\in \Z} c_n e^{in \varphi}$, the above equality becomes:

\begin{eqnarray*}
&& \sum_{n\in \Z} \left[ i  n\,c_n\, (e^{2\pi i n \rho} + 1) \,\sin (\pi \rho)  - 
c_n\, (e^{2\pi i n \rho} - 1) \,\cos (\pi \rho)
\right]\, e^{i n \varphi} \; =0\\
&\Longleftrightarrow& 
2i \sum_{n\in \Z} \underbrace{\left( 
n\, \cos (n\pi\rho) \sin (\pi\rho) - \sin(n\pi\rho) \cos (\pi\rho)
\right)}_{:= k_{\rho}(n)}\, c_n e^{in\pi\rho} e^{in\varphi}
=0.\\
\end{eqnarray*}
Hence, we can conclude that $c_n=0$ for all $n\in \Z$ such that  $k_{\rho}(n) \neq 0$.\\

For for $\rho=\frac 12$, we have $k_{\frac{1}{2}}(2n+1)=0$ for every $n \in \Z$. This and the previous observation imply  that support function  is of the form
$$
\qquad h(\varphi):= \sum_{n\in \Z} c_{2n} e^{i\, 2n \varphi} \qquad {\Longleftrightarrow} \qquad h(\varphi + \pi) + h(\varphi) \equiv 2c_0 \quad \forall\; \varphi \in [0,2\pi).
$$
This characterizes the support functions of constant width domains (see \cite{Gutkin, Taba}).\\

For $\rho \neq \frac 12$, 
\begin{equation}\label{tangentequation}
k_{\rho}(n) = 0 \qquad \Longleftrightarrow \qquad
\tan (n\pi \rho) = n\, \tan (\pi \rho).
\end{equation}
Trivially, $k_{\rho}(0) = k_{\rho}(\pm 1) = 0$. 
Let us define the set of $\rho$ that do not admit other non-trivial solutions:
\begin{equation}\label{defR}
\mathcal R := \Big\{\rho \in \Big(0, \frac 12\Big):\;  \tan (n\pi \rho) \neq n\, \tan (\pi \rho) \quad \forall\, n\in \Z, \; |n| \geq 2 \Big\}.
\end{equation}
Then, if $\rho \in \mathcal R$, we conclude:
$$
c_n = 0 \quad \forall\; n\neq 0, \pm 1 \qquad  \Longrightarrow \qquad h(\varphi) = c_0 + c_1 e^{i\varphi} + \overline{c_1} e^{-i\varphi} = c_0 +  2\alpha \cos (\varphi + \beta)
$$
where $c_1 = {\alpha} e^{i \beta}$, $\alpha \geq 0$. This corresponds to the support function of a disk of radius $c_0$ and center in $(2\alpha \cos \beta, - 2\alpha \sin \beta)$.\\

It was proven in \cite[Theorem 1]{Cyr} (see also \cite{FlorianPrachar}) that $\mathcal R \supset \big(0, \frac 12\big)\cap \Q$. Moreover,  
$\big(0, \frac 12\big) \setminus \mathcal R$ is countable and dense in $\big(0, \frac 12\big)$ (see \cite{Gutkin}).
\end{proof}

\medskip

\subsection{Proof of Theorem \ref{thmsymplbill} (Symplectic billiards)}

We use  a clever parametrization of the boundary curve $\gamma$ used by Sas in \cite{Sas}.

\begin{proof}
Consider a coordinate system whose \( x \)-axis coincides with the largest chord (or one of the largest chords) of $\Omega$ and whose origin lies at the midpoint of this chord. Moreover, by rescaling, let us assume that the length of the largest chord is $2$.

The boundary of $\Omega$ can  be represented by the equations:
\begin{equation}\label{defgamma}
\gamma(t) := (\cos t,  e(t) \sin t) \qquad t\in [0,2\pi),
\end{equation}
where \( e(t) \), except at the points \( t = 0 \) and \( t = \pi \), is a  {$C^2$}, 
positive, \( 2\pi \)-periodic function 
in \( t \). Furthermore:
\[
\lim_{s \to 0} e(s) s = 0, \quad \lim_{s \to 0} e(\pi - s) s = 0.
\]

Let $\frac pq \in \Q\cap \left(0,\frac12\right)$, $p,q$ coprime, and  consider  the inscribed $q$-gon with winding number $p$ with (ordered) vertices corresponding to the configurations: 
\[
t_0=t, \quad t_1 = t + \frac{2\pi p}{q}, \quad \ldots, \quad t_{q-1} = t + (q-1) \frac{2\pi p}{q}.
\]

The action of this configuration equals the negative of the area of the corresponding $q$-gon (we use that \(t_{q} = t_0 + 2\pi p\)):
\begin{align*}
\mathcal A_{\frac pq} (t) :&= -\frac 12 \sum_{k=0}^{q-1} \omega (\gamma (t_i), \gamma (t_{i+1})) \\
& =-\frac{1}{2} \sum_{i=0}^{q-1} e(t_i) \sin t_i (\cos t_{i-1} - \cos t_{i+1})\\
& =-\frac{1}{2} \sin\left(\frac{2\pi p}{q}\right)\sum_{i=0}^{q-1} e\left(t + \frac{2\pi p}{q}\right) \sin^2\left(t+\frac{2\pi p}{q}\right)\geq  \beta_\Omega^{\rm ins}\bigg(\frac pq \bigg).\\
\end{align*}

Integrating last inequality with respect to $t$ and using Gauss-Green formula, we obtain:
\begin{align*}
\frac{1}{q} \int_0^{2\pi} \mathcal A_{\frac pq} (t) \, dt &= - \sin \left(\frac{2\pi p}{q}\right) \int_0^{2\pi} e(t) \sin^2 t \, dt = -  \sin \left(\frac{2\pi p}{q}\right)\, |\Omega|\\&\geq 2\pi \beta_\Omega^{\rm ins}\bigg(\frac pq \bigg).
\end{align*}

Taking into account $\beta^{\rm symp}_\D=-\frac 12\sin(2\pi\rho)$ (see Example \ref{example3}) we get exactly (\ref{ineqbetasympl}) for rational rotation numbers. Hence, by continuity, {\bf (i)} holds true for all rotation numbers.

To  prove {\bf (ii)}
let us assume that that there exists $\rho \in (0,\frac 12)$ such that there is equality in (\ref{ineqbetasympl}) (the result clearly holds for $\rho=0, \frac 12$). 
This implies, exactly as in the  proofs of Lemma \ref{linearconfigurations} (see also Theorems \ref{thm1} and \ref{thmBirkbill}), that for every $t \in [0,\pi)$ the configuration
$$ \{t_k\}_{k\in\Z}, \quad t_{k} = t + k2\pi\rho $$
corresponds to an orbit of the symplectic billiard in $\Omega$. Hence,
$
\gamma'(t) $ is parallel to  $\gamma(t+ 2\pi \rho) - \gamma(t-2\pi \rho)$ for every $t\in [0,2\pi)$, namely:
$$
\gamma'(t)= \alpha(t) \left( \gamma\left(t+ \frac{2\pi p}{q}\right) - \gamma\left(t- \frac{2\pi p}{q}\right)\right) \qquad \forall\; t\in [0,2\pi),
$$
where $\alpha : [0,2\pi)\longrightarrow \R$.
Recalling the specific form of the parametrization in \eqref{defgamma} and looking at the $x$-component, the above equality reads:
\begin{eqnarray*}
- \sin t = \alpha(t) \left(\cos \left(t+ 2\pi \rho)\right) - \cos \left(t- 2\pi \rho\right) \right) = -2 \alpha(t)\, \sin t \,\sin\left( 2\pi \rho\right),
\end{eqnarray*}
from which it follows that $\alpha(t) = \frac{1}{2 \sin( 2\pi \rho)}$.

Looking now at the $y$-component of the above relation, we obtain that:
\[
y'(t) = \frac{y(t + 2\pi \rho) - y(t - 2\pi \rho)}{2 \sin (2\pi \rho)} \qquad \forall\; t\in [0,2\pi).
\]

Expanding $y(t)$ in Fourier series, $y(t) = \sum_{n\in \Z}^{\infty} c_n e^{it}$, the previous equality becomes
\begin{align*}
 \sum_{n\in \Z} i n c_n e^{int} \;&=\; \frac{1}{{2 \sin(2\pi \rho)}} \sum_{n\in \Z} c_n \left(e^{i 2\pi n \rho} - e^{-i 2\pi n \rho}\right) e^{int}\\
   0&=\sum_{n\in \Z} c_n \underbrace{\left(n -  \frac{\sin \left(n 2\pi \rho\right)}{\sin \left(2\pi \rho\right)}\right)}_{:= w_{\rho}(n)} e^{int}.
\end{align*}

Hence, we can conclude that $c_n=0$ for all $n\in \Z$ such that  $w_{\rho}(n) \neq 0$.
Observe that:
\begin{equation}\label{tangentequation}
w_{\rho}(n) = 0 \qquad \Longleftrightarrow \qquad
\sin \left(n 2\pi  \rho\right) = n\, \sin \left(2\pi  \rho\right).
\end{equation}

\medskip

Clearly, there are trivial solutions to the above equation:
$$
w_{\rho}(0) = w_{\rho}(\pm 1) = 0.
$$

It can be shown, see e.g. \cite[Lemma 4.3]{bialy-tsodikovich} that these are the only solutions.
\
Therefore,
\[
y(t) = e(t) \sin t = c_0 +  c_1 e^{it} + \overline{c_1} e^{-it}.
\]
Since  \( y(0) = y(\pi) = 0 \), then
\begin{eqnarray*}
&& c_0 + c_1 + \overline{c_1} =  c_0 + 2\,{\rm Re}\,(c_1) = 0 \\
&& c_0 - c_1 - \overline{c_1} =  c_0 - 2\,{\rm Re}\,(c_1) = 0.
\end{eqnarray*}
Therefore $c_0 = {\rm Re}\,(c_1) = 0$ and, if we denote $c_1= i\alpha$ with $\alpha\in \R$:
$$
y(t) = e(t) \sin t =  i \alpha \,e^{it} -  i\alpha \,e^{-it} =  - 2 \,\alpha\, \sin t,
$$
and we conclude  that \( e(t)  \) is identically constant, which means the curve is an ellipse.

\end{proof}

\medskip

\subsection{Proof of Theorem \ref{thm4thbill} ($4^{\rm th}$ billiards)}

For $\gamma=\partial\Omega$ we will use envelope coordinates to compute 
the generating function $S_{\Omega4}=\lambda_0+\lambda_1$.
\begin{figure}[h]\label{lambda0lambda1}
	\centering
	\includegraphics[width=0.4\linewidth]{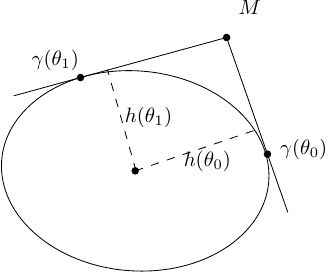}
	\caption{Computing $\lambda_0, \lambda_1$ in envelope coordinates}
\end{figure}

Let us denote by $h$ the support function of $\Omega$. We have the following formulae.
\begin{lemma}\label{lemma:lambdas}
$$
\lambda_0=-h'(\theta_0)+\frac{h(\theta_1)}{\sin(\theta_1-\theta_0)}-h(\theta_0)\cot(\theta_1-\theta_0),
$$
$$
\lambda_1=h'(\theta_1)+\frac{h(\theta_0)}{\sin(\theta_1-\theta_0)}-h(\theta_1)\cot(\theta_1-\theta_0).
$$
Therefore,
$$
S^{4^{\rm th}}_{\Omega}(\theta_0,\theta_1)=\lambda_0+\lambda_1=h'(\theta_1)-h'(\theta_0)+\bigg(h(\theta_1)+h(\theta_0)\bigg)\tan(\frac{\theta_1-\theta_0}{2}).
$$
\end{lemma}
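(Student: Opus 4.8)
The plan is to derive the two expressions for $\lambda_0$ and $\lambda_1$ directly from the geometry of envelope coordinates, and then obtain the formula for $S^{4^{\rm th}}_{\Omega}$ by summing them and simplifying. The key geometric input is the parametrization of the boundary in terms of the support function, namely equation \eqref{eq:envelope}: $\gamma(\theta) = h(\theta)(\cos\theta, \sin\theta) + h'(\theta)(-\sin\theta, \cos\theta)$. The tangent line to $\partial\Omega$ at $\gamma(\theta)$ is the line $\{x : \langle x, (\cos\theta, \sin\theta)\rangle = h(\theta)\}$, with direction vector $(-\sin\theta, \cos\theta)$. The point $M_0$ is the intersection of the two tangent lines at $\gamma(\theta_0)$ and $\gamma(\theta_1)$, and $\lambda_0, \lambda_1$ are the signed distances along these tangent lines from the respective tangency points to $M_0$.

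First I would write $M_0 = \gamma(\theta_0) + \lambda_0 (-\sin\theta_0, \cos\theta_0)$, so that $M_0$ lies on the tangent line at $\gamma(\theta_0)$ at signed arclength $\lambda_0$ from the tangency point. Imposing that $M_0$ also lie on the tangent line at $\gamma(\theta_1)$ gives the scalar equation $\langle M_0, (\cos\theta_1, \sin\theta_1)\rangle = h(\theta_1)$. Substituting the envelope formula for $\gamma(\theta_0)$ and using the elementary identities $\langle(\cos\theta_0,\sin\theta_0),(\cos\theta_1,\sin\theta_1)\rangle = \cos(\theta_1-\theta_0)$ and $\langle(-\sin\theta_0,\cos\theta_0),(\cos\theta_1,\sin\theta_1)\rangle = \sin(\theta_1-\theta_0)$, one solves the resulting linear equation for $\lambda_0$. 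This yields
\[
\lambda_0 = \frac{h(\theta_1) - h(\theta_0)\cos(\theta_1-\theta_0) + h'(\theta_0)\sin(\theta_1-\theta_0)\cdot(\text{sign})}{\sin(\theta_1-\theta_0)},
\]
which after rearrangement matches the stated expression; by the symmetry $\theta_0 \leftrightarrow \theta_1$ (writing $M_0 = \gamma(\theta_1) - \lambda_1(-\sin\theta_1,\cos\theta_1)$ and accounting for the orientation of the parametrization) one obtains the analogous formula for $\lambda_1$. I would be careful with signs: the asymmetry between the two formulae (the $-h'(\theta_0)$ versus $+h'(\theta_1)$, and the roles of $h(\theta_0)$ and $h(\theta_1)$ in the non-cotangent terms) reflects that $\lambda_0$ is measured backward along $\dot\gamma(\theta_0)$ while $\lambda_1$ is measured forward, consistent with Figure \ref{lambda0lambda1}.

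Finally I would add the two expressions. The cotangent terms combine as $-\big(h(\theta_0)+h(\theta_1)\big)\cot(\theta_1-\theta_0)$ and the reciprocal-sine terms combine as $\big(h(\theta_0)+h(\theta_1)\big)/\sin(\theta_1-\theta_0)$; using the half-angle identity
\[
\frac{1}{\sin\varphi} - \cot\varphi = \frac{1 - \cos\varphi}{\sin\varphi} = \tan\!\Big(\frac{\varphi}{2}\Big)
\]
with $\varphi = \theta_1 - \theta_0$ collapses these into $\big(h(\theta_0)+h(\theta_1)\big)\tan\big(\tfrac{\theta_1-\theta_0}{2}\big)$, while the derivative terms give $h'(\theta_1) - h'(\theta_0)$, producing the claimed generating function. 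The main obstacle is not the algebra but the bookkeeping of orientations and signs in setting up $\lambda_0$ and $\lambda_1$: one must verify that the signed distances are positive for an exterior point $M_0$ with $0 < \theta_1 - \theta_0 < \pi$, so that the formulae genuinely represent the Euclidean lengths $|M_0 - \gamma(\theta_0)|$ and $|M_0 - \gamma(\theta_1)|$ as asserted. Checking this against the disk case (where $h \equiv 1$, giving $S^{4^{\rm th}} = 2\tan(\tfrac{\theta_1-\theta_0}{2})$, consistent with Example \ref{example4}) provides a useful sanity check on all sign conventions.
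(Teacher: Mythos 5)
Your proposal is correct and follows essentially the same route as the paper: both express the tangency points via the support-function parametrization \eqref{eq:envelope}, use the tangent-line equations $\cos\theta\,x+\sin\theta\,y=h(\theta)$ to locate the intersection point, and conclude with the half-angle identity $\frac{1}{\sin\varphi}-\cot\varphi=\tan\left(\frac{\varphi}{2}\right)$. The only (minor) difference is organizational: the paper first solves the $2\times 2$ linear system for the explicit coordinates $(x_M,y_M)$ and then extracts $\lambda_0$ as $\frac{x_M-x_{\gamma(\theta_0)}}{\cos(\theta_0+\pi/2)}$ (an intermediate step that formally degenerates at $\sin\theta_0=0$, though the final formula is unaffected), whereas you parametrize $M_0=\gamma(\theta_0)+\lambda_0(-\sin\theta_0,\cos\theta_0)$ and solve one scalar equation per $\lambda_i$ via inner products, which is slightly cleaner and handles the sign conventions for $\lambda_0$ versus $\lambda_1$ in the same stroke.
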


\begin{proof}
	Coordinates of the points $\gamma(\theta_0)$ and $\gamma(\theta_1)$ can be found using formula (\ref{eq:envelope}) yielding 
	\begin{align*}
		\gamma (\theta_0) &= h(\theta_0)\, (\cos \theta_0, \sin \theta_0) + h'(\theta_0)\, (- \sin \theta_0, \cos \theta_0),\\
		\gamma (\theta_1) &= h(\theta_1)\, (\cos \theta_1, \sin \theta_1) + h'(\theta_1)\, (- \sin \theta_1, \cos \theta_1).
	\end{align*}
	
	Next, one needs to find the coordinates of the point $M$ as the intersection of the tangent lines. Thus  $(x_M,y_M)$, the coordinates of $M$, satisfy the system:
	\[
	\begin{cases}
			\cos\theta_0 x+\sin\theta_0 y=h(\theta_0) \\
		\cos\theta_1 x+\sin\theta_1 y=h(\theta_1).
	\end{cases}
	\]

	Solving this system we get:
	$$
	(x_M,y_M)=\frac{1}{\sin(\theta_1-\theta_0)}\bigg(h(\theta_0)\sin\theta_1-
	h(\theta_1)\sin\theta_0,\ h(\theta_1)\cos\theta_0-
	h(\theta_0)\cos\theta_1\bigg).
	$$Hence we get:
	
\begin{eqnarray*}
\lambda_0 &=& \frac{x_M-x_{\gamma(\theta_0)}}{\cos(\theta_0+\pi/2)} \\
&=& - \frac{h(\theta_0)\sin\theta_1-
		h(\theta_1)\sin\theta_0-\sin(\theta_1-\theta_0)(h(\theta_0) \cos \theta_0- h'(\theta_0)\sin \theta_0)}{\sin\theta_0{\sin(\theta_1-\theta_0)}}.
\end{eqnarray*}
Substituting into the numerator of this formula the identity
$$\sin\theta_1=\sin\theta_0\cos(\theta_1-\theta_0)+\cos\theta_0\sin(\theta_1-\theta_0),$$ 
we get precisely the first claim of  Lemma \ref {lemma:lambdas}.
Analogously, the second formula of Lemma \ref{lemma:lambdas} is proved.
	\end{proof}
	
	\medskip

\begin{proof} ({\it Theorem \ref{thm4thbill}}) {\bf (i)} Let us prove the inequality for any $\rho=\frac{p}{q} \in (0, \frac 12)\cap \Q$, the irrational case will follow by continuity. Consider the equispaced configurations with rotation number $\rho$ which starts with an arbitrary $\vartheta$
	\[
	\theta_i=\vartheta+k\frac{2\pi p}{q}\qquad  k=0,...,q-1
	\]
	and $\theta_q=\vartheta+2\pi p$.
We compute the action of this configuration using Lemma \ref{lemma:lambdas}:
$$\mathcal{A}_{\frac pq}(\vartheta)= \sum_{k=0}^{q-1} S^{4^{\rm th}}_{\Omega4}(\theta_k, \theta_{k+1}) =2\sum_{k=0}^{q-1}
h\bigg(\vartheta+k\frac{2\pi p}{q}\bigg)\tan\frac{\pi p}{q}.
$$
By the properties of the $\beta$-function we have the inequality for every $\vartheta$:
$$
\frac 1q\mathcal{A}_{\frac pq}(\vartheta)=\frac 2q \sum_{k=0}^{q-1}
h\bigg(\vartheta+k\frac{2\pi p}{q}\bigg)\tan\frac{\pi p}{q}\geq \beta^{4^{\rm th}}_{\Omega}(\frac pq).
$$
Integrating last inequality with respect to $\vartheta$ and using $\int_{0}^{2\pi}h(\vartheta)d\vartheta=|\partial\Omega|$ we get
$$
2|\partial\Omega|\ \tan (\frac{\pi p}q)\geq2\pi \beta^{4^{\rm th}}_{ \Omega}\bigg(\frac pq\bigg)
$$ Using in this inequality the expression for $\mathcal{D}$, 
$$
\beta^{4^{\rm th}}_{\mathcal D}\bigg(\frac pq\bigg)=2\tan\bigg(\frac {\pi p}{q}\bigg),
$$ we get the inequality (\ref{ineqbeta4}).

 {\bf (ii)} Let us assume now that for some $\rho\in \left(0, \frac12\right)$ there is an equality in (\ref{ineqbeta4}).
It then follows from the proof and Lemma \ref{linearconfigurations} that for any $\vartheta$ the equispaced configuration 
$\{\theta_k=\vartheta+2\pi \rho\}_k$  has a minimal average action and therefore is an orbit of the $4^{\rm th}$billiard.
Since the orbits are equispaced, the triangle with vertices $M_0,M_1$ and $P$ in Figure \ref{fig:proof4} is isosceles. Since we are considering an orbit, the bisector of the angle at $P$ must be also orthogonal to the side connecting $M_0$ to $M_1$. Thus, their intersection is the point $\gamma(x_1)$ which is the middle point of $[M_0,M_1]$ as well. So, we get the following equation:
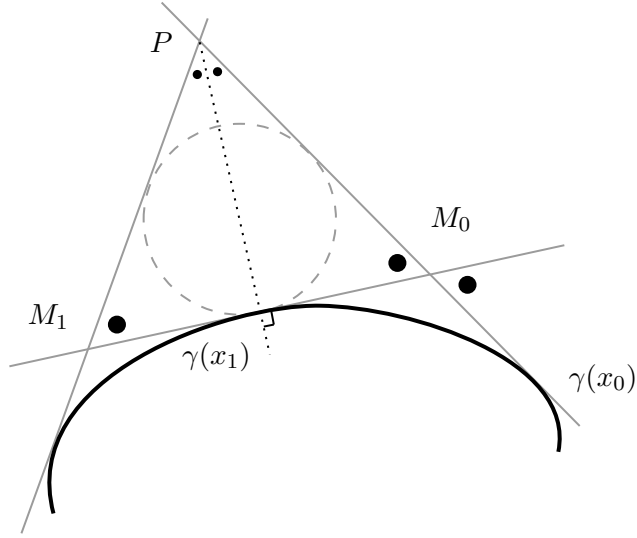
\begin{figure}
	\centering
	\label{fig:proof4}	
	\tikzset{every picture/.style={line width=0.75pt}} 
	
	\begin{tikzpicture}[x=0.75pt,y=0.75pt,yscale=-1,xscale=1]

		\draw [color={rgb, 255:red, 155; green, 155; blue, 155 }  ,draw opacity=1 ]   (324.33,3) -- (533,216) ;
		\draw [color={rgb, 255:red, 155; green, 155; blue, 155 }  ,draw opacity=1 ]   (248.33,186) -- (525.33,125.08) ;
		\draw [color={rgb, 255:red, 155; green, 155; blue, 155 }  ,draw opacity=1 ]   (347.33,11) -- (254.33,270) ;
		\draw  [color={rgb, 255:red, 155; green, 155; blue, 155 }  ,draw opacity=1 ][dash pattern={on 4.5pt off 4.5pt}] (315.33,112) .. controls (315.33,85.49) and (336.82,64) .. (363.33,64) .. controls (389.84,64) and (411.33,85.49) .. (411.33,112) .. controls (411.33,138.51) and (389.84,160) .. (363.33,160) .. controls (336.82,160) and (315.33,138.51) .. (315.33,112) -- cycle ;
		\draw  [dash pattern={on 0.84pt off 2.51pt}]  (343.33,23) -- (378.33,180.08) ;
		\draw  [fill={rgb, 255:red, 0; green, 0; blue, 0 }  ,fill opacity=1 ] (298,165) .. controls (298,162.79) and (299.79,161) .. (302,161) .. controls (304.21,161) and (306,162.79) .. (306,165) .. controls (306,167.21) and (304.21,169) .. (302,169) .. controls (299.79,169) and (298,167.21) .. (298,165) -- cycle ;
		\draw  [fill={rgb, 255:red, 0; green, 0; blue, 0 }  ,fill opacity=1 ] (438,134) .. controls (438,131.79) and (439.79,130) .. (442,130) .. controls (444.21,130) and (446,131.79) .. (446,134) .. controls (446,136.21) and (444.21,138) .. (442,138) .. controls (439.79,138) and (438,136.21) .. (438,134) -- cycle ;
		\draw    (379,158) -- (380.33,165) ;
		\draw    (375.33,166) -- (380.33,165) ;
		\draw [line width=1.5]    (270.33,260) .. controls (252.33,191) and (357.33,154.75) .. (403.33,155.5) .. controls (449.33,156.25) and (531.33,182.75) .. (522.33,229) ;
		\draw  [fill={rgb, 255:red, 0; green, 0; blue, 0 }  ,fill opacity=1 ] (473,145) .. controls (473,142.79) and (474.79,141) .. (477,141) .. controls (479.21,141) and (481,142.79) .. (481,145) .. controls (481,147.21) and (479.21,149) .. (477,149) .. controls (474.79,149) and (473,147.21) .. (473,145) -- cycle ;
		\draw  [fill={rgb, 255:red, 0; green, 0; blue, 0 }  ,fill opacity=1 ] (340.33,39.21) .. controls (340.33,38.22) and (341.14,37.42) .. (342.12,37.42) .. controls (343.11,37.42) and (343.92,38.22) .. (343.92,39.21) .. controls (343.92,40.2) and (343.11,41) .. (342.12,41) .. controls (341.14,41) and (340.33,40.2) .. (340.33,39.21) -- cycle ;
		\draw  [fill={rgb, 255:red, 0; green, 0; blue, 0 }  ,fill opacity=1 ] (350.74,38.8) .. controls (350.18,37.99) and (350.4,36.87) .. (351.21,36.31) .. controls (352.03,35.76) and (353.15,35.97) .. (353.7,36.79) .. controls (354.26,37.6) and (354.05,38.72) .. (353.23,39.28) .. controls (352.41,39.83) and (351.3,39.62) .. (350.74,38.8) -- cycle ;

		\draw (526,183.4) node [anchor=north west][inner sep=0.75pt]    {$\gamma ( x_{0})$};
		\draw (333,172.4) node [anchor=north west][inner sep=0.75pt]    {$\gamma ( x_{1})$};
		\draw (256,153.4) node [anchor=north west][inner sep=0.75pt]    {$M_{1}$};
		\draw (457,104.4) node [anchor=north west][inner sep=0.75pt]    {$M_{0}$};
		\draw (317,16.4) node [anchor=north west][inner sep=0.75pt]    {$P$};
			
	\end{tikzpicture}
	
	\caption{A portion of an equispaced $4^{\rm th}$ billiard trajectory}
\end{figure}
$$
\lambda_0(M_1)=\lambda_1(M_0),
$$which, due to the formulae of Lemma \ref{lemma:lambdas}, is equivalent to
\begin{equation}
-h'(\vartheta+\delta)+\frac{h(\vartheta+2\delta)}{\sin\delta}-h(\vartheta+\delta)\cot\delta=h'(\vartheta+\delta)+\frac{h(\vartheta)}{\sin\delta}-h(\vartheta+\delta)\cot\delta,
\end{equation}where $\delta=2\pi\rho$.
Thus 
\begin{equation}
2h'(\vartheta+\delta)\sin\delta=h(\vartheta+2\delta)-h(\vartheta).
\end{equation}
Considering the Fourier series of $h$, $h(\varphi):= \sum_{n\in \Z} c_n e^{in \varphi}$, the above equality becomes:
$$
2c_n(in e^{in\delta}\sin\delta)=c_n(e^{2in\delta}-1).
$$
Therefore
$$
c_n(2in\sin\delta)=c_n(2i\sin n\delta).
$$
The equation $\sin n\delta=n\sin\delta$ is obviously satisfied for $n=0,\ \pm 1$, but has no other  solution for $|n|>1$ (see for example \cite[Lemma 4.3]{bialy-tsodikovich}). 
This implies that $c_n=0$ for all $ |n|>1$, meaning (see the proof of Theorem \ref{thmBirkbill})  that $\gamma$ is a circle.
\end{proof}

\bigskip
\section{Outer billiards: Examples and Counterexamples }\label{sec:counterexouterbill}

The following natural question arises:

\medskip

\noindent{\bf Question:} {\it Given an Outer billiard in a  domain $\Omega$ (with the same hypothesis as above), can one estimate $\beta^{\tiny \rm out}_\Omega$  similarly  to what we did for Birkhoff, symplectic billiards and $4^{\rm th}$ billiards? Namely, does the following inequality hold true
$$	\beta^{\tiny \rm out}_\Omega (\rho) \leq  \frac{|\Omega|}{\pi} \beta^{\tiny \rm out}_{\D_1}(\rho) \qquad \forall\; \rho\in \Big[0,\frac 12\Big)\; ?$$
Equivalently,  can one estimate the minimal area of circumscribed polygons about $\Omega$ by the area of regular polygons circumscribing a disk of the same area as $\Omega$?}\\

We will see in the following two subsections that the answer is negative at least for  rotation numbers  $1/3$ and $1/4$. This will be achieved by establishing some relations between the dynamics of symplectic and outer billiards.\\

In the sequel we use the generating functions $\mathcal S_\Omega^{\rm out}$ and $\mathcal S^{\rm symp}_\Omega$ introduced in Sections \ref{subsec:generating_function_symplectic} and \ref{subsec:generating_function_outer},   as well as the interpretation of the action of periodic orbit as the area of corresponding circumscribed, respectively inscribed, polygon.\\

\subsection{Rotation number $1/3$} \label{sec:ex13}

Let us start with the following observation, which is an immediate consequence of Thale's intercept theorem (see Figure \ref{figtriangle}). 

\begin{lemma}\label{lemmarelationactions}
Let  $(A,B,C)$ be  vertices of a circumscribed triangle and $(a,b,c)$  the tangency points.  Then $(A,B,C)$ is a 3-periodic orbit    for $B^{\tiny \rm out}_\Omega$ if and only if, the points 
$(a,b,c)$  form  a 3-periodic orbit of $B^{\tiny \rm symp}_\Omega$ (see Figure \ref{figtriangle}).
Moreover, the areas of these triangles (hence, the actions of the corresponding) are related in the following way:
$${\rm Area} \ (\triangle ABC)=4 \, {\rm Area}(\triangle  abc)$$
\end{lemma}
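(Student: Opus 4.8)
The plan is to translate both dynamical conditions into elementary geometric conditions on the circumscribed triangle $\triangle ABC$ and its tangency points $a,b,c$, and then to link the two characterizations through the midpoint (Thales) theorem. Throughout I label the tangency points so that $a\in BC$, $b\in CA$, $c\in AB$.

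First I would unwind the definition of the outer billiard. By construction $B^{\tiny\rm out}_\Omega$ sends a point $p$ to the reflection of $p$ across the tangency point $q$ of the appropriately oriented tangent line through $p$. Hence, for a circumscribed triangle with consecutive vertices $A,B,C$ whose sides touch $\partial\Omega$ at $c\in AB$, $a\in BC$, $b\in CA$, the relation $A\mapsto B$ holds precisely when $c$ is the midpoint of $AB$, and cyclically. Therefore $(A,B,C)$ is a $3$-periodic orbit of $B^{\tiny\rm out}_\Omega$ if and only if each side of $\triangle ABC$ is tangent to $\partial\Omega$ at its midpoint; equivalently, $a,b,c$ are the midpoints of the sides, so that $\triangle abc$ is the medial triangle of $\triangle ABC$. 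Next I would unwind the symplectic reflection law: for three consecutive impact points, the chord joining the outer two is parallel to the tangent line at the middle one. Since $\triangle ABC$ is circumscribed, the tangent line at $a$ is the side $BC$, at $b$ the side $CA$, and at $c$ the side $AB$. Thus $(a,b,c)$ is a $3$-periodic symplectic orbit if and only if $ab\parallel AB$, $bc\parallel BC$ and $ca\parallel CA$.

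The bridge between the two characterizations is Thales' intercept theorem. In one direction, if $a,b,c$ are the midpoints, the midline theorem gives immediately $ab\parallel AB$, $bc\parallel BC$, $ca\parallel CA$. For the converse I would parametrize $a=B+\alpha(C-B)$, $b=C+\beta(A-C)$, $c=A+\gamma(B-A)$; the three parallelism conditions translate, by the intercept theorem, into $1-\alpha=\beta$, $1-\beta=\gamma$, $1-\gamma=\alpha$, whose unique solution is $\alpha=\beta=\gamma=\tfrac12$. Hence the parallelism conditions force the tangency points to be midpoints, which establishes the equivalence of the two periodicity conditions. The area identity then follows at once from $\triangle abc$ being the medial triangle: the three midlines cut $\triangle ABC$ into four congruent triangles, each similar to $\triangle ABC$ with ratio $\tfrac12$, so $\mathrm{Area}(\triangle abc)=\tfrac14\,\mathrm{Area}(\triangle ABC)$.

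The main point requiring care is the bookkeeping of orientation and convexity, rather than the algebra. One must check that the tangent line selected in the outer billiard rule is exactly the one compatible with the boundary orientation (so that the cyclic order of $A,B,C$ matches the winding of the symplectic orbit $a,b,c$), that the tangent lines at $a,b,c$ genuinely meet pairwise in a non-degenerate triangle circumscribing $\Omega$, and that both configurations are honest periodic orbits lying in the respective phase spaces, not merely configurations satisfying the local midpoint/parallelism relations. These verifications are precisely where the strict convexity of $\Omega$ and the twist structure of the two maps enter, and they are what promote the elementary Thales computation into a genuine statement about the two billiard dynamics.
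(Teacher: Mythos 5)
Your proposal is correct and follows exactly the route the paper intends: the paper dismisses this lemma as ``an immediate consequence of Thales' intercept theorem,'' and your argument is precisely that observation spelled out --- outer $3$-periodicity $\Leftrightarrow$ tangency at midpoints, symplectic $3$-periodicity $\Leftrightarrow$ the midline parallelism conditions, the two linked by Thales and its converse, with the medial-triangle decomposition giving ${\rm Area}(\triangle ABC)=4\,{\rm Area}(\triangle abc)$. Your converse computation ($1-\alpha=\beta$, $1-\beta=\gamma$, $1-\gamma=\alpha$ forcing $\alpha=\beta=\gamma=\tfrac12$) and the orientation caveats are sound details the paper leaves implicit.
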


\begin{figure}[h]
	\includegraphics[width=0.3\linewidth]{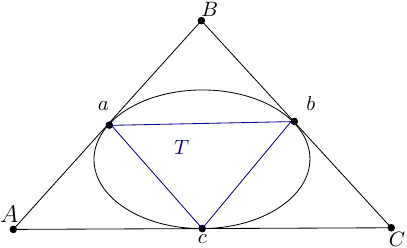}
	\caption{Outer billiard orbit $ABC$ and corresponding Symplectic billiard orbit $abc$}
	\label{figtriangle}
\end{figure}

\medskip
\begin{remark} \label{reminvcurvesperiod3}
It follows easily from the previous Lemma that $B^{\tiny \rm out}_\Omega$ admits an invariant curve consisting of periodic orbits of period $3$ if and only if $B^{\tiny \rm symp}_\Omega$ does.
Non-trivial functional family of domains $\Omega$ for which $B^{\tiny \rm out}_\Omega$ and $B^{\tiny \rm symp}_\Omega$ admit invariant curves consisting of periodic points of period $3$ were constructed by Genin-Tabachnikov \cite{genin-tabachnikov}.
\end{remark}

\medskip

\begin{corollary} \label{propouter13}
The following inequality holds:
\begin{equation}\label{beta1/3}
	\beta^{\tiny \rm out}_\Omega \left(\frac{1}{3}\right) + 4\,\beta^{\tiny \rm symp}_\Omega \left(\frac{1}{3}\right) \leq 0.
\end{equation}
Moreover, equality holds if and only if $B^{\tiny \rm out}_\Omega$ (or equivalently $B^{\tiny \rm symp}_\Omega$) admits an invariant curve consisting of periodic points of period $3$.
\end{corollary}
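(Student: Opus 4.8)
The plan is to read the inequality \eqref{beta1/3} as a comparison between the minimal area of a circumscribed triangle and the maximal area of an inscribed one, and to obtain it by feeding one extremal orbit of one billiard into the other through Lemma \ref{lemmarelationactions}. Recall that $3\beta^{\rm out}_\Omega(\tfrac13)$ equals the minimal area of a triangle circumscribed about $\Omega$ (the minimizer is automatically a billiard orbit, since its tangency points are the side-midpoints), while $-3\beta^{\rm symp}_\Omega(\tfrac13)$ equals the maximal area of a triangle inscribed in $\Omega$. Thus \eqref{beta1/3}, i.e. $\beta^{\rm out}_\Omega(\tfrac13)+4\beta^{\rm symp}_\Omega(\tfrac13)\le 0$, is equivalent to
$$\text{(minimal circumscribed area)} \;\le\; 4\,\text{(maximal inscribed area)}.$$

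For the inequality itself I would take an action-minimizing $3$-periodic orbit of $B^{\rm symp}_\Omega$, that is, a triangle $abc$ realizing the maximal inscribed area $-3\beta^{\rm symp}_\Omega(\tfrac13)$. By Lemma \ref{lemmarelationactions} the tangent lines to $\partial\Omega$ at $a,b,c$ bound a circumscribed triangle $ABC$ that is a $3$-periodic orbit of $B^{\rm out}_\Omega$ with $\mathrm{Area}(ABC)=4\,\mathrm{Area}(abc)$. Since $ABC$ is merely \emph{one} circumscribed triangle, its area is at least the minimal circumscribed area $3\beta^{\rm out}_\Omega(\tfrac13)$, whence $3\beta^{\rm out}_\Omega(\tfrac13)\le -12\,\beta^{\rm symp}_\Omega(\tfrac13)$, which is exactly \eqref{beta1/3}. (Dually one could start from the minimal circumscribed triangle, whose tangency points are the midpoints of its sides and therefore form an inscribed triangle of one quarter the area, bounded above by the maximal inscribed area.)

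The substantive part is the equality case. The medial-triangle correspondence of Lemma \ref{lemmarelationactions} puts the $3$-periodic orbits of $B^{\rm out}_\Omega$ and of $B^{\rm symp}_\Omega$ in bijection, with circumscribed area always equal to four times the corresponding inscribed area; hence equality in \eqref{beta1/3} holds precisely when every $3$-periodic symplectic orbit encloses the same (maximal) area. If $B^{\rm symp}_\Omega$ (equivalently, by Remark \ref{reminvcurvesperiod3}, $B^{\rm out}_\Omega$) admits an invariant curve of period-$3$ points, then by Mather's differentiability characterization recalled in Section \ref{sec:AubryMather} every orbit on it is action-minimizing and they share the common action $3\beta^{\rm symp}_\Omega(\tfrac13)$; transporting a single such orbit through the bijection yields, again by Remark \ref{reminvcurvesperiod3}, an orbit on the corresponding invariant curve of $B^{\rm out}_\Omega$, which is therefore action-minimizing. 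Its area is then simultaneously $-12\,\beta^{\rm symp}_\Omega(\tfrac13)$ and $3\beta^{\rm out}_\Omega(\tfrac13)$, forcing equality.

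For the converse I would argue by contraposition: if no such invariant curve exists, then $\beta^{\rm symp}_\Omega$ is not differentiable at $\tfrac13$, and Aubry--Mather theory then furnishes, besides the action-minimizing (maximal-area) orbits, a minimax $3$-periodic symplectic orbit $a'b'c'$ whose action lies strictly above the minimum, i.e. an inscribed triangle of area strictly less than $-3\beta^{\rm symp}_\Omega(\tfrac13)$. Applying Lemma \ref{lemmarelationactions} to $a'b'c'$ produces a circumscribed triangle of area strictly less than $-12\,\beta^{\rm symp}_\Omega(\tfrac13)$, so the minimal circumscribed area $3\beta^{\rm out}_\Omega(\tfrac13)$ is strictly smaller and \eqref{beta1/3} becomes strict. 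I expect the main obstacle to lie exactly here: extracting the minimax orbit with strictly smaller area from the failure of differentiability, and verifying that its three tangent lines genuinely bound a triangle containing $\Omega$, requires the finer structure of the Aubry--Mather set at a rational rotation number rather than the soft extremal comparison that suffices for the inequality.
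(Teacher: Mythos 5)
Your proposal is correct and follows essentially the same route as the paper: the inequality comes from Lemma \ref{lemmarelationactions} (the medial/anticomplementary-triangle correspondence with area ratio $4$) combined with the extremal property of the two $\beta$-functions, and the equality case is handled via minimizing orbits on the transported invariant curves. Your converse is just the contrapositive of the paper's argument (equality plus a non-minimal orbit forces strict inequality, hence all $3$-periodic orbits are minimizing and lie on an invariant curve); the minimax fact you flag as the main obstacle --- that absence of an invariant circle of $(1,3)$-minimizers yields a periodic orbit of action strictly above the minimum --- is standard Aubry--Mather theory at rational rotation numbers and is exactly what the paper uses implicitly in the step ``all orbits are action minimizing and therefore they must belong to an invariant curve.''
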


\medskip

\begin{proof}
We have that
$$
{\rm Area} \ (\triangle ABC)=4 \, {\rm Area}(\triangle  abc).
$$
It follows from the property of  of $\beta$-function:
$$
\beta^{\tiny \rm out}_\Omega \left(\frac{1}{3}\right) \leq {\rm Area} \ (\triangle ABC) \quad {\rm and}\ 
\beta^{\tiny \rm symp}_\Omega \left(\frac{1}{3}\right) \leq - {\rm Area}(\triangle  abc),
$$
which proves (\ref{beta1/3}).

If $B^{\tiny \rm out}_\Omega$ (and therefore $B^{\tiny \rm sym}_\Omega$) admits an invariant curve consisting of periodic points of period $3$ (see Remark \ref{reminvcurvesperiod3}), then we have the equalities:

$$
\beta^{\tiny \rm out}_\Omega \left(\frac{1}{3}\right) = {\rm Area} \ (\triangle ABC)  \quad {\rm and}\ 
\beta^{\tiny \rm symp}_\Omega \left(\frac{1}{3}\right) = - {\rm Area} \ (\triangle abc)),
$$ since all orbits on the curve are action-minimizing and hence there is  equality also in (\ref{beta1/3}). 

{In other direction, assume equality \eqref{beta1/3} holds. Then, if there were a non minimal orbit $A'B'C'$ for the outer billiard we could find an orbit of the symplectic billiard yielding (we denote by $a', b', c'$ the corresponding tangency points)
\[
4\beta^{\tiny \rm symp}_\Omega \left(\frac{1}{3}\right) + \beta^{\tiny \rm out}_\Omega \left(\frac{1}{3}\right) <4 {\rm Area} \ (\triangle a'b'c') + {\rm Area} \ (\triangle A'B'C')=0
\]
and vice versa, violating equality \eqref{beta1/3}. 
Thus, all orbits are action minimizing and therefore they must belong to an invariant curve. 
}

\end{proof}

We can state the following result.

\begin{theorem}\label{thm:beta1/3}
Let $\Omega$ be such that $B^{\tiny \rm out}_\Omega$  admits an invariant curve consisting of periodic points of period $3$. Then:
\begin{equation}\label{ineqouter13}
	\beta^{\tiny \rm out}_\Omega \left(\frac 13\right) \geq  \frac{|\Omega|}{\pi} \beta^{\tiny \rm out}_{\D}\left(\frac 13\right),
	\end{equation}
where $|\Omega|$ denotes the area of $\Omega$ and  $\D$ the unit disk. 
 In particular, equality holds if and only if $\Omega$ is an ellipse.\\
\end{theorem}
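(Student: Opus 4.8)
The plan is to avoid any direct analysis of the outer $\beta$-function and instead transfer the already-proved symplectic inequality of Theorem \ref{thmsymplbill} through the exact identity supplied by Corollary \ref{propouter13}. The one feature to watch carefully is a sign reversal, which is exactly what makes the resulting inequality point in the \emph{opposite} direction to the naive isoperimetric guess (and hence constitutes the promised counterexample).

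First I would use the standing hypothesis. Since $\Omega$ admits an invariant curve of period-$3$ orbits of $B^{\tiny \rm out}_\Omega$, we are in the equality case of Corollary \ref{propouter13}, so
$$
\beta^{\tiny \rm out}_\Omega\left(\tfrac{1}{3}\right) = -4\,\beta^{\tiny \rm symp}_\Omega\left(\tfrac{1}{3}\right).
$$
This is the crucial move: it replaces the outer $\beta$-value, for which we have no isoperimetric control, by a symplectic $\beta$-value, which Theorem \ref{thmsymplbill} \emph{does} control. Next I would invoke Theorem \ref{thmsymplbill}(i) at $\rho=\tfrac13$, namely $\beta^{\tiny \rm symp}_\Omega(\tfrac13)\le \tfrac{|\Omega|}{\pi}\,\beta^{\tiny \rm symp}_{\D}(\tfrac13)$, and multiply through by $-4$; because the factor is negative, the inequality reverses to
$$
-4\,\beta^{\tiny \rm symp}_\Omega\left(\tfrac{1}{3}\right) \ge \frac{|\Omega|}{\pi}\left(-4\,\beta^{\tiny \rm symp}_{\D}\left(\tfrac{1}{3}\right)\right).
$$

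It then remains only to check that the two disk normalizations agree. By Example \ref{example3}, $\beta^{\tiny \rm symp}_{\D}(\tfrac13)=-\tfrac12\sin(\tfrac{2\pi}{3})=-\tfrac{\sqrt3}{4}$, so $-4\,\beta^{\tiny \rm symp}_{\D}(\tfrac13)=\sqrt3$; by Example \ref{example2}, $\beta^{\tiny \rm out}_{\D}(\tfrac13)=\tan(\tfrac{\pi}{3})=\sqrt3$. Hence $-4\,\beta^{\tiny \rm symp}_{\D}(\tfrac13)=\beta^{\tiny \rm out}_{\D}(\tfrac13)$ (equivalently, the disk itself saturates Corollary \ref{propouter13}, since it is integrable and carries period-$3$ invariant curves). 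Chaining the three displays gives $\beta^{\tiny \rm out}_\Omega(\tfrac13)\ge \tfrac{|\Omega|}{\pi}\,\beta^{\tiny \rm out}_{\D}(\tfrac13)$, which is precisely \eqref{ineqouter13}.

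For the equality statement I would simply read the chain backwards: equality in \eqref{ineqouter13} is equivalent, via the identity $\beta^{\tiny \rm out}_\Omega(\tfrac13)=-4\,\beta^{\tiny \rm symp}_\Omega(\tfrac13)$ and the constant match above, to equality in Theorem \ref{thmsymplbill}(i) at $\rho=\tfrac13$. Since $\tfrac13\in(0,\tfrac12)$, Theorem \ref{thmsymplbill}(ii) forces $\Omega$ to be an ellipse, and conversely an ellipse saturates the symplectic bound (and, by affine invariance, carries the required period-$3$ invariant curve). I do not expect a genuine geometric obstacle here: once Corollary \ref{propouter13} is available, the entire argument is bookkeeping. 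The only point demanding care is the negative multiplier that flips the inequality — this is precisely the structural reason the expected outer-billiard isoperimetric inequality fails rather than holds.
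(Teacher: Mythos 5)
Your proposal is correct and follows essentially the same route as the paper: the paper's proof likewise chains the equality case of Corollary \ref{propouter13} (valid under the invariant-curve hypothesis) with Theorem \ref{thmsymplbill}(i), using the sign flip from the factor $-4$ and the disk values $\beta^{\tiny \rm symp}_{\D}(\tfrac13)=-\tfrac{\sqrt 3}{4}$, $\beta^{\tiny \rm out}_{\D}(\tfrac13)=\sqrt 3$ from Examples \ref{example2} and \ref{example3}, and settles equality via Theorem \ref{thmsymplbill}(ii). Your explicit verification of the converse direction (that ellipses saturate the bound, by affine invariance) is a small but welcome addition the paper leaves implicit.
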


\begin{remark}
Therefore all $\Omega$ constructed by Genin-Tabachnikov in \cite{genin-tabachnikov}
(see Remar \ref{reminvcurvesperiod3}) provide examples for which inequality \eqref{ineqouter13} is strict.
\end{remark}

\medskip

\begin{proof}
Using Corollary \ref{propouter13},  Theorem \ref{thmsymplbill}, Examples \ref{example2} and \ref{example3}, we get
$$
\beta^{\tiny \rm out}_\Omega \left(\frac{1}{3}\right)= - 4\,\beta^{\tiny \rm symp}_\Omega \left(\frac{1}{3}\right)
\geq - 4\frac{|\Omega|}{\pi} \beta^{\tiny \rm symp}_{\D} \left(\frac{1}{3}\right)=\frac{|\Omega|}{\pi} \beta^{\tiny \rm out}_{\D}\left(\frac 13\right),
$$
where we used that $\beta^{\tiny \rm out}_{\D}\left(\frac 13\right) = \tan \frac{\pi}{3}  = \sqrt{3}$ and $\beta^{\tiny \rm symp}_{\D}\left(\frac 13\right) = -\frac 12 \sin \frac{2\pi}{3}  =- \sqrt{3}/4$.
In particular, if equality holds, then it follows that $\beta^{\tiny \rm symp}_\Omega (1/3) = \frac{|\Omega|}{\pi} \, \beta^{\tiny \rm sym}_{\D} (1/3)$, hence 
Theorem \ref{thmsymplbill} (ii) implies that $\Omega$ is an ellipse.
\end{proof}

\begin{remark}
Another way to prove Theorem \ref{thm:beta1/3} is to use 
Blaschke inequality \cite{blaschketriangle} for inscribed triangle of maximal area (see Figure \ref{figtriangle})
$$ {\rm Area} ( \triangle{abc})\geq \frac{3\sqrt 3}{4\pi} | \Omega|,$$with equality only for ellipses.
Hence $$
\frac 13{\rm Area}(\triangle ABC)=\frac 43{\rm Area (\triangle abc)\geq \frac{\sqrt 3}{\pi} |\Omega|=\beta^{\tiny \rm out}_{\D}\left(\frac 13\right)\frac{|\Omega|}{\pi}.}
$$
This implies the result.
\end{remark}

\bigskip

\subsection{Rotation number $1/4$} \label{sec:14}
Let us remind first the notion of Radon curves. Radon curves have been thoroughly studied since their introduction more than 100
years ago, we refer to \cite{martini} and \cite{bialybortabachnikov} for modern aspects.\\

Let $\gamma$ be a smooth closed convex curve in the plane, symmetric with respect to the origin. Let
$x,y \in \gamma.$ One says that $y$ is {\it Birkhoff orthogonal} to $x$ if $y$ is parallel to the tangent line
to $\gamma$ at $x$. \\
 This relation is not necessarily symmetric; if it is symmetric, then $\gamma$ is called
a Radon curve. Radon curves comprise a functional space, with ellipses providing a
trivial example.\\

 A characteristic property of $C^2$-Radon curves is the following. A centrally symmetric curve $\gamma$ is a Radon curve if and only if the outer billiard map has an invariant curve consisting  of $4-$periodic orbits.  Each of these orbits form a circumscribed  parallelogram of minimal area (see Figure \ref{figureRadon}). 
\begin{center}
\begin{figure}[h]
	\includegraphics[width=0.4\textwidth]{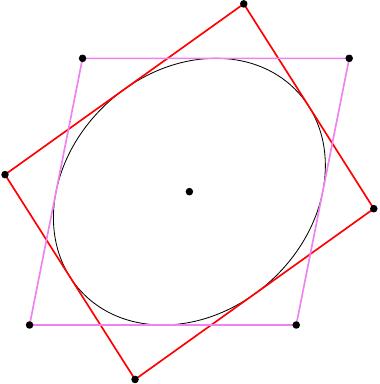}
	\label{figureRadon}
	\caption{Some orbits of period $4$ for a Radon curve}
\end{figure}
\end{center}

\begin{remark}\label{remarkradon}
{\bf (i)} It follows from the properties of $\beta$ function that for centrally symmetric curve Radon property is equivalent to differentiability of $\beta$ at $1/4$. \\
{\bf (ii)} Moreover, if one normalize the area of these parallelograms to be $4$, then 
Radon curve is symplectically self dual, that is it coincides with its polar rotated by $\pi/2$.
\end{remark}

\begin{theorem} \label{thm:14centralsym}
Let $\gamma=\partial \Omega$  be a centrally symmetric Radon curve. Then
\begin{equation}\label{ineqouter14}
\beta^{\tiny \rm out}_\Omega \left(\frac 14\right) \geq  \frac{|\Omega|}{\pi} \beta^{\tiny \rm out}_{\D}\left(\frac 14\right)
\end{equation}
with the equality if and only if $\Omega$ is an ellipse.
\label{thm:betaRadon}
\end{theorem}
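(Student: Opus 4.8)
The plan is to mirror the strategy used for rotation number $1/3$ (Corollary \ref{propouter13} and Theorem \ref{thm:beta1/3}), replacing the circumscribed/inscribed \emph{triangle} correspondence by a circumscribed/inscribed \emph{parallelogram} one. The central symmetry of the Radon curve is what makes this work: if $\Omega$ is symmetric about the origin, an action-minimizing $4$-periodic orbit $ABCD$ of $B^{\rm out}_\Omega$ is invariant under the symmetry, so $C=-A$ and $D=-B$; hence $\vec{AD}=\vec{BC}$ and $ABCD$ is a parallelogram circumscribed about $\Omega$.

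First I would establish the $1/4$ analogue of Lemma \ref{lemmarelationactions}. By the very definition of the outer billiard map, each tangency point is the midpoint of the corresponding orbit segment, so the tangency points $a,b,c,d$ of $ABCD$ are the midpoints of its sides, i.e. they form the \emph{Varignon parallelogram} of $ABCD$. Writing $B=A+u$, $D=A+v$, $C=A+u+v$, a direct computation gives $\vec{ac}=v$ (parallel to the side $BC$, hence to the tangent line at $b$) and $\vec{bd}=-u$ (parallel to the side $AB$, hence to the tangent at $a$); this is exactly the symplectic billiard reflection law, so $abcd$ is a $4$-periodic orbit of $B^{\rm symp}_\Omega$. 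The same computation yields the area identity
\[
{\rm Area}(ABCD)=2\,{\rm Area}(abcd),
\]
where the factor is now $2$, in contrast with the factor $4$ occurring for triangles in Lemma \ref{lemmarelationactions}.

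Next, exactly as in Corollary \ref{propouter13}, this area identity combined with the variational characterization of $\beta$ gives
\[
\beta^{\rm out}_\Omega\Big(\tfrac14\Big)+2\,\beta^{\rm symp}_\Omega\Big(\tfrac14\Big)\le 0,
\]
with equality precisely when $B^{\rm out}_\Omega$ (equivalently $B^{\rm symp}_\Omega$) admits an invariant curve consisting of $4$-periodic orbits. Since $\gamma$ is a Radon curve, the characteristic property recalled before the statement guarantees exactly such an invariant curve of $B^{\rm out}_\Omega$; the Varignon correspondence above transports it to an invariant curve of $4$-periodic orbits of $B^{\rm symp}_\Omega$. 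We are therefore in the equality case, so that $\beta^{\rm out}_\Omega(1/4)=-2\,\beta^{\rm symp}_\Omega(1/4)$.

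Finally I would invoke the symplectic rigidity of Theorem \ref{thmsymplbill}. Using $\beta^{\rm symp}_{\D}(1/4)=-\tfrac12$ and $\beta^{\rm out}_{\D}(1/4)=\tan\frac{\pi}{4}=1$, the bound $\beta^{\rm symp}_\Omega(1/4)\le \frac{|\Omega|}{\pi}\beta^{\rm symp}_{\D}(1/4)$ becomes, after multiplying by $-2$ (which reverses the inequality),
\[
\beta^{\rm out}_\Omega\Big(\tfrac14\Big)=-2\,\beta^{\rm symp}_\Omega\Big(\tfrac14\Big)\ge -2\,\frac{|\Omega|}{\pi}\,\beta^{\rm symp}_{\D}\Big(\tfrac14\Big)=\frac{|\Omega|}{\pi}=\frac{|\Omega|}{\pi}\,\beta^{\rm out}_{\D}\Big(\tfrac14\Big),
\]
which is precisely \eqref{ineqouter14}. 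Equality forces equality in Theorem \ref{thmsymplbill}, whence $\Omega$ is an ellipse; conversely, ellipses are Radon curves and saturate each inequality above, giving the claimed characterization. The main obstacle I anticipate is the geometric Step~1: verifying cleanly that the midpoint (Varignon) parallelogram is a \emph{bona fide} symplectic billiard orbit of rotation number $1/4$ and pinning down the correct area factor $2$; once this is in place, the rest is a formal consequence of the already-established symplectic rigidity and the variational properties of $\beta$.
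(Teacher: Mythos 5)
Your proof is correct, but it follows a genuinely different route from the paper's own proof of this theorem. The paper's argument is a short self-duality computation: rescale so that the circumscribed parallelograms on the invariant curve have area $4$; by Remark \ref{remarkradon} (ii) the Radon curve then coincides with its polar dual rotated by $\pi/2$, so the Blaschke--Santal\'o inequality gives $|\Omega|\leq \pi$, whence $\beta^{\tiny \rm out}_\Omega\left(\frac 14\right)=1\geq \frac{|\Omega|}{\pi}\,\beta^{\tiny \rm out}_{\D}\left(\frac 14\right)$, with the equality case inherited directly from the equality case of Blaschke--Santal\'o. Your route instead goes through the Varignon (midpoint) parallelogram correspondence, the resulting inequality $\beta^{\tiny \rm out}_\Omega\left(\frac14\right)+2\,\beta^{\tiny \rm symp}_\Omega\left(\frac14\right)\leq 0$ with equality in the presence of invariant curves of $4$-periodic orbits, and then the symplectic rigidity of Theorem \ref{thmsymplbill}; this is precisely the machinery the paper develops afterwards (Lemma \ref{lemmaquadrilater}, inequality \eqref{inequalitybeta14} and Theorem \ref{thm:nonsymmetric1/4}) in order to \emph{remove} the central symmetry assumption, so in effect you have proved the stronger, non-symmetric statement and specialized it to Radon curves. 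What each approach buys: the paper's proof is shorter and exposes the link between Radon curves, symplectic self-duality and Blaschke--Santal\'o, while yours trades that for the Varignon correspondence plus Theorem \ref{thmsymplbill} and never uses central symmetry beyond producing the invariant curves --- exactly the added generality the paper records separately. Two small remarks on your write-up: your opening claim that an action-minimizing $4$-periodic orbit of a centrally symmetric domain must itself be symmetric (hence a parallelogram) is not justified as stated (minimizers need not be unique), but it is also unnecessary --- the Varignon area identity holds for arbitrary quadrilaterals, and the Radon hypothesis already supplies an invariant curve consisting of parallelogram orbits; and note that the diagonals of the midpoint quadrilateral are parallel to the sides (so that $abcd$ is a genuine symplectic orbit) only when $ABCD$ is a parallelogram, which is exactly the case on the Radon invariant curve, while for the inequality step no orbit property of $abcd$ is needed, only that it is inscribed. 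Finally, your use of the converse implication (invariant curves $\Rightarrow$ equality in \eqref{inequalitybeta14}) is legitimate: it follows from Mather's result, recalled in the paper, that all orbits on an invariant curve of rational rotation number are action-minimizing.
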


\begin{remark}
Therefore all non-elliptic Radon curves provide  examples for which inequality \eqref{ineqouter14} is strict.
\end{remark}

\begin{proof}
By	rescaling, we can assume that the  area of circumscribed parallelograms equals 4. In this case curve the curve
	$\gamma$ coincides with its polar dual rotated by $\pi/2$ (see Remark \ref{remarkradon} (ii)). Hence, by Blaschke--Santalo inequality  (see for instance \cite{santalo}) we obtain that
	$
	|\Omega| \leq \pi.
	$	
Therefore:
\begin{eqnarray*}
\beta^{\tiny \rm out}_\Omega \left(\frac 14\right) = 1 
\geq  \frac{|\Omega|}{\pi} \beta^{\tiny \rm out}_{\D_1}\left(\frac 14\right),
\end{eqnarray*}
where we have used that 
$\beta^{\tiny \rm out}_{\D}\left(\frac 14\right) = \tan \frac{\pi}{4} =1$, see Example \ref{example2}.
In particular, if equality holds, then Blaschke--Santalo inequality is an equality, hence $\Omega$ must be an ellipse. 
	\end{proof}

\subsubsection{Relaxing central symmetry assumption in Theorem \ref{thm:betaRadon}}

Let us start with the following geometric observation, whose proof is obvious (see Figure \ref{figurequadrilateral}).

\begin{lemma}\label{lemmaquadrilater}
Let $A, B, C, D$ the vertices a convex quadrilateral and denote by $a,b,c,d$ the midpoints on each side. Then, the quadrilateral obtained by joining $a,b,c,d$ is a parallelogram, whose area is half of the area of the original quadrilateral.
Moreover, if $ABCD$ is a parallelogram, then the diagonal of $abcd$ are parallel to the sides of $ABCD$.
\end{lemma}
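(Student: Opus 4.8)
The plan is to prove the statement by an elementary vector computation, which is essentially Varignon's theorem. I identify each vertex with its position vector in $\R^2$ and set $a=\tfrac{A+B}{2}$, $b=\tfrac{B+C}{2}$, $c=\tfrac{C+D}{2}$, $d=\tfrac{D+A}{2}$ for the midpoints of the four sides $AB$, $BC$, $CD$, $DA$ respectively.

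The first step is to show that $abcd$ is a parallelogram. A direct computation gives $b-a=\tfrac{C-A}{2}=c-d$ and $c-b=\tfrac{D-B}{2}=d-a$, so the opposite sides $ab$ and $dc$ are equal vectors, and likewise $bc$ and $ad$. Hence $abcd$ is a parallelogram, and its sides are moreover parallel to the diagonals $AC$, $BD$ of $ABCD$ and half their lengths.

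For the area I would use the determinant area form $\omega(\cdot,\cdot)$ already employed in the paper. On one hand, the Varignon parallelogram has area $|\omega(b-a,\,d-a)|=\tfrac14\,|\omega(C-A,\,D-B)|$. On the other hand, splitting the convex quadrilateral $ABCD$ along the diagonal $AC$ and adding the two (consistently oriented) triangle areas gives ${\rm Area}(ABCD)=\tfrac12\,|\omega(C-A,\,D-B)|$, i.e. half the magnitude of the wedge of the two diagonals. Comparing the two expressions yields ${\rm Area}(abcd)=\tfrac12\,{\rm Area}(ABCD)$, as claimed.

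Finally, for the ``moreover'' part I would specialise to the case in which $ABCD$ is a parallelogram, which is characterised by $A+C=B+D$. Substituting $D=A+C-B$ into the two diagonals of $abcd$ gives $c-a=C-B$ and $d-b=A-B$, so the diagonals $ac$ and $bd$ of $abcd$ are parallel to the sides $BC$ and $AB$ of $ABCD$, as required. No step presents a genuine obstacle, consistently with the lemma being flagged as obvious; the only point deserving a line of justification is the diagonal formula for the area of the quadrilateral, which follows immediately from the triangle decomposition once one checks that convexity of $ABCD$ forces the two triangles to inherit the same orientation.
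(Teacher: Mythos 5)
Your proof is correct and complete. Note that the paper itself offers no argument at all: it states the lemma ``whose proof is obvious'' and points to a figure, so there is nothing to diverge from --- your write-up simply supplies the details the authors chose to omit. Your route is the standard one (Varignon's theorem via position vectors): the identities $b-a=\tfrac{C-A}{2}=c-d$ and $c-b=\tfrac{D-B}{2}=d-a$ give the parallelogram claim, and comparing $\lvert\omega(b-a,\,d-a)\rvert=\tfrac14\lvert\omega(C-A,\,D-B)\rvert$ with the diagonal formula ${\rm Area}(ABCD)=\tfrac12\lvert\omega(C-A,\,D-B)\rvert$ gives the factor $\tfrac12$; the latter formula is exactly the shoelace identity $\omega(C-A,\,D-B)=\omega(A,B)+\omega(B,C)+\omega(C,D)+\omega(D,A)$, and your remark that convexity guarantees the two triangles $ABC$, $ACD$ carry the same orientation is precisely the point that needed saying. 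The ``moreover'' part is also handled correctly: characterising a parallelogram by $A+C=B+D$ and computing $c-a=C-B$, $d-b=A-B$ shows the diagonals of $abcd$ are parallel to the sides $BC$ and $AB$, which is what the paper uses later (Corollary \ref{remarkparallelogram}) to turn an outer-billiard parallelogram orbit into a symplectic-billiard orbit. One small bonus of your computation over the purely pictorial claim in the paper: it shows the sides of $abcd$ are parallel to the diagonals of $ABCD$ and half their lengths, from which both assertions of the lemma fall out uniformly, with no case analysis on the shape of the quadrilateral.
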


\begin{center}
\begin{figure} 
\tikzset{every picture/.style={line width=0.5pt}} 
\begin{tikzpicture}[x=0.75pt,y=0.75pt,yscale=-0.75,xscale=0.75]
\draw   (151,23) -- (475,89) -- (338,270) -- (156,201) -- cycle ;
\draw    (153.5,112) -- (247,235.5) ;
\draw  [dash pattern={on 0.84pt off 2.51pt}]  (151,23) -- (338,270) ;
\draw    (322,59) -- (403,186) ;
\draw    (153.5,112) -- (322,59) ;
\draw    (247,235.5) -- (403,186) ;
\draw  [dash pattern={on 0.84pt off 2.51pt}]  (156,201) -- (475,89) ;

\draw (137,10) node [anchor=north west][inner sep=0.75pt]  [xscale=0.5,yscale=0.5] [align=left] {A};
\draw (137,198) node [anchor=north west][inner sep=0.75pt]  [xscale=0.5,yscale=0.5] [align=left] {B};
\draw (335,272) node [anchor=north west][inner sep=0.75pt]  [xscale=0.5,yscale=0.5] [align=left] {C};
\draw (479,82) node [anchor=north west][inner sep=0.75pt]  [xscale=0.5,yscale=0.5] [align=left] {D};
\draw (133,102) node [anchor=north west][inner sep=0.75pt]  [xscale=0.5,yscale=0.5] [align=left] {a};
\draw (237,237) node [anchor=north west][inner sep=0.75pt]  [xscale=0.5,yscale=0.5] [align=left] {b};
\draw (408.5,182.5) node [anchor=north west][inner sep=0.75pt]  [xscale=0.5,yscale=0.5] [align=left] {c};
\draw (320.5,36.5) node [anchor=north west][inner sep=0.75pt]  [xscale=0.5,yscale=0.5] [align=left] {d};
\end{tikzpicture}
\label{figurequadrilateral}
\caption{Geometric construction in Lemma \ref{lemmaquadrilater}}
\end{figure}
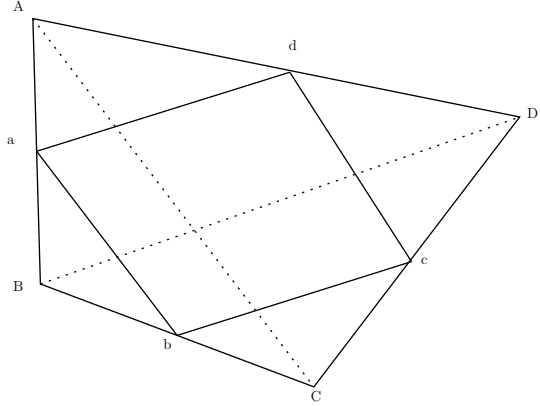
\end{center}

\medskip

\begin{corollary}\label{remarkparallelogram}
It follows from Lemma \ref{lemmaquadrilater} that if an orbit of  $B^{\tiny \rm out}_\Omega$ corresponds to a parallelogram $ABCD$, then the quadrilateral constructed by joining the midpoints of each side  (as in Lemma \ref{lemmaquadrilater}) corresponds to an orbit of $B^{\tiny \rm sym}_\Omega$, which  is also a  parallelogram and its area equals half the area of $ABCD$ .\\
\end{corollary}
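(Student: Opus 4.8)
The plan is to exploit the fact that, for the outer billiard, each side of the circumscribed parallelogram $ABCD$ is tangent to $\partial\Omega$ exactly at its midpoint. Indeed, by the very definition of $B^{\tiny \rm out}_\Omega$ (see Section \ref{sec:outerbill}), if $B^{\tiny \rm out}_\Omega(P)=Q$ with tangency point $t$, then $t$ is the midpoint of the segment $PQ$. Applying this successively around the orbit $ABCD$, the four tangency points $a,b,c,d$ are precisely the midpoints of the sides $AB,\,BC,\,CD,\,DA$, appearing in this cyclic order along $\partial\Omega$. Thus the quadrilateral $abcd$ in the statement is exactly the one produced by the midpoint construction of Lemma \ref{lemmaquadrilater}, and two of the three assertions are immediate: $abcd$ is a parallelogram and its area is half that of $ABCD$. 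Moreover, since $ABCD$ is assumed to be a parallelogram, the last clause of Lemma \ref{lemmaquadrilater} tells us that the diagonals $ac$ and $bd$ of $abcd$ are parallel to the sides of $ABCD$.

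It then remains to check that $a,b,c,d$ genuinely form an orbit of the symplectic billiard $B^{\tiny \rm sym}_\Omega$. Since these points lie on $\partial\Omega$ and are cyclically ordered, they determine an admissible configuration in the phase space $\mathcal X_\Omega$, and I would verify the (non-local) reflection law of Section \ref{sec:symplbill} at each vertex: namely, that the chord joining the two neighbors of a vertex is parallel to the tangent line of $\partial\Omega$ at that vertex. The decisive geometric observation is that the tangent line at each tangency point coincides with the corresponding side of $ABCD$ --- the tangent at $a$ is the line $AB$, at $b$ the line $BC$, and so on.

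With this identification, every instance of the symplectic reflection law becomes a parallelism between a diagonal of $abcd$ and a side of $ABCD$. For instance, at the vertex $a$ the relevant chord joins its neighbors $d$ and $b$, so one must show $bd \parallel AB$; and this is exactly the content of Lemma \ref{lemmaquadrilater}, since the diagonal $bd$ is parallel to the sides $AB$ and $CD$. Running the same argument at $b,c,d$ --- using that the other diagonal $ac$ is parallel to the sides $BC$ and $DA$ --- verifies all four reflection conditions at once, so $abcd$ is an orbit of $B^{\tiny \rm sym}_\Omega$. The area statement is already contained in Lemma \ref{lemmaquadrilater}, completing the argument.

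I do not anticipate a serious obstacle: once the tangency points are recognized as the side-midpoints, the corollary reduces entirely to matching the symplectic reflection law against the parallelism of the Varignon diagonals with the sides of $ABCD$. The only point deserving a little care is the bookkeeping --- ensuring the cyclic labeling of $a,b,c,d$ is consistent with the orientation of the boundary, so that ``the chord joining the two neighbors'' is correctly paired with the tangent at the intermediate vertex, and that the resulting configuration lies in $\mathcal X_\Omega$ rather than on a degenerate boundary stratum.
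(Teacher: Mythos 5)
Your proposal is correct and takes essentially the same route as the paper, which deduces the corollary directly from Lemma \ref{lemmaquadrilater}: the tangency points of the outer-billiard orbit are the side-midpoints, and the parallelism of the Varignon diagonals of $abcd$ with the sides of the parallelogram $ABCD$ is precisely the symplectic reflection law at each of $a,b,c,d$ (where the tangent line to $\partial\Omega$ coincides with the corresponding side). Your additional care about the cyclic ordering and admissibility of the configuration in $\mathcal X_\Omega$ merely spells out details the paper leaves implicit.
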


\begin{proposition}
The following inequality holds:
\begin{equation}\label{inequalitybeta14}
\beta^{\tiny \rm out}_\Omega \left(\frac{1}{4}\right) + 2\,\beta^{\tiny \rm symp}_\Omega \left(\frac{1}{4}\right) \leq 0.
\end{equation}
Moreover, if equality holds in (\ref{inequalitybeta14}), then  both $B^{\tiny \rm out}_\Omega$ and  $B^{\tiny \rm symp}_\Omega$ admit  invariant curves consisting of periodic points of period $4$.
\end{proposition}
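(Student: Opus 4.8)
The plan is to mirror the proof of Corollary~\ref{propouter13} (rotation number $1/3$), replacing Thales' theorem by the Varignon-type statement of Lemma~\ref{lemmaquadrilater} and Corollary~\ref{remarkparallelogram}. The crucial difference to keep in mind is that the halving-of-area relation between a circumscribed quadrilateral and its midpoint quadrilateral holds for \emph{every} quadrilateral, whereas the midpoints form an actual \emph{symplectic} billiard orbit only when the circumscribed quadrilateral is a parallelogram. For the inequality only the area relation is needed, so this asymmetry will surface only in the equality discussion.

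For the inequality I would start from the action-minimizing outer $4$-orbit, that is, the circumscribed quadrilateral $ABCD$ of minimal area, so that $4\beta^{\tiny \rm out}_\Omega(1/4)=\mathrm{Area}(ABCD)$ and, by the outer billiard rule, the four tangency points $a,b,c,d$ are precisely the midpoints of its sides. By Lemma~\ref{lemmaquadrilater}, $abcd$ is an inscribed parallelogram with $\mathrm{Area}(abcd)=\tfrac12\mathrm{Area}(ABCD)$. Since $abcd$ is merely \emph{one} inscribed $4$-gon (of winding number $1$), its area is at most the maximal inscribed area $-4\beta^{\tiny \rm symp}_\Omega(1/4)$. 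Chaining these relations, $4\beta^{\tiny \rm out}_\Omega(1/4)=\mathrm{Area}(ABCD)=2\,\mathrm{Area}(abcd)\le -8\beta^{\tiny \rm symp}_\Omega(1/4)$, which after dividing by $4$ is exactly \eqref{inequalitybeta14}. Note that here, in contrast with the $1/3$ case, I never need the midpoints to be a symplectic orbit.

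For the equality case I would run the same chain on an \emph{arbitrary} outer $4$-orbit $A'B'C'D'$: writing $\text{(min circ.\ area)}\le \mathrm{Area}(A'B'C'D')=2\,\mathrm{Area}(\text{midpoints})\le 2\,\text{(max insc.\ area)}$ and using that equality in \eqref{inequalitybeta14} forces $\text{(min circ.\ area)}=2\,\text{(max insc.\ area)}$, the whole chain collapses. Hence every outer $4$-orbit is action-minimizing, so by Aubry--Mather theory they fill an invariant curve $\Gamma^{\tiny \rm out}$ of period-$4$ points, and each midpoint parallelogram realizes the maximal inscribed area. A maximal inscribed quadrilateral is a critical point of the area functional, hence a symplectic billiard orbit; but a Varignon parallelogram is a symplectic orbit \emph{only} when the outer quadrilateral is a parallelogram, since the reflection law ``tangent at a midpoint parallel to the chord joining the two neighbouring midpoints'' reduces, via Lemma~\ref{lemmaquadrilater}, to parallelism of opposite sides of $A'B'C'D'$. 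Therefore every orbit on $\Gamma^{\tiny \rm out}$ is a parallelogram, and Corollary~\ref{remarkparallelogram} sends it to a genuine symplectic $4$-orbit; as the outer orbits sweep $\Gamma^{\tiny \rm out}$ these symplectic orbits sweep an invariant curve $\Gamma^{\tiny \rm symp}$ of minimal period-$4$ points, yielding the invariant curve for $B^{\tiny \rm symp}_\Omega$ as well.

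The main obstacle is exactly this equality analysis. In the $1/3$ case the tangency-point map is a genuine bijection between outer and symplectic $3$-orbits, so minimality transfers immediately in both directions; here the midpoint construction produces a symplectic orbit only on parallelograms, so the delicate step is to \emph{upgrade} the area-saturation statement ``the midpoint quadrilateral is maximal inscribed'' into the rigidity statement ``the outer orbit is a parallelogram'', and then to verify that the resulting family of symplectic orbits assembles into a genuine one-dimensional invariant curve rather than a discrete set. As a safeguard for this last point, I note that exhibiting a single circle of minimal symplectic $4$-orbits already makes $\beta^{\tiny \rm symp}_\Omega$ differentiable at $1/4$, which by Mather's criterion (property (iii) of $\beta$) produces the invariant curve directly.
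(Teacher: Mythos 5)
Your proposal is correct and takes essentially the same route as the paper: the inequality comes from applying the Varignon halving of Lemma \ref{lemmaquadrilater} to a minimizing outer $4$-orbit whose tangency points are the side midpoints, and in the equality case the same chain applied to an arbitrary outer $4$-orbit (the paper phrases this as a contradiction rather than your sandwich, but the content is identical) forces every such orbit to be action-minimizing and its midpoint parallelogram to attain the maximal inscribed area, hence to be a minimal symplectic orbit, yielding both invariant curves. Your extra detour through parallelogram-ness of the circumscribed orbits and the Mather-differentiability safeguard are correct but redundant refinements of steps the paper simply asserts.
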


\begin{proof}
Let $A,B,C,D$ be the vertices of a quadrilateral corresponding to an orbit of $B^{\tiny \rm out}_\Omega$ and construct the inscribed polygon with vertices at the tangency points ({\it i.e.}, the midpoints of each side), which a-priori might not be an orbit of $B^{\tiny \rm symp}_\Omega$. We denote by ${\rm Area}(\square ABCD)$ and ${\rm Area}(\square abcd)$ the respective areas and by Lemma \ref{lemmaquadrilater},
$$
{\rm Area}(\square ABCD) = 2\, {\rm Area}(\square abcd).
$$

In particular, considering a minimizing orbit $A,B,C,D$ we obtain 
\begin{equation}
\beta^{\tiny \rm out}_\Omega \left(\frac{1}{4}\right) =\frac{{\rm Area}(\square ABCD)}{4} 
= \frac{{\rm Area}(\square abcd)}{2} 
\leq
 -  2  \beta^{\tiny \rm symp}_\Omega \left(\frac{1}{4}\right),
\end{equation}
where we used that $\beta^{\tiny \rm symp}_\Omega \left(\frac{1}{4}\right) \leq - \frac{1}{4} {\rm Area}(\square abcd)$. This proves the claimed inequality.

Now, let us assume that equality holds in \eqref{inequalitybeta14}.
Let us first show that $B^{\tiny \rm out}_\Omega$ cannot admit a periodic orbit of period $4$ that is not action-minimizing (this implies that it admits an invariant curve consisting of periodic orbits of period $4$). By contradiction, suppose that $A,B,C,D$ are the vertices of a quadrilateral of area
$
{\rm Area}(\square ABCD) >  4\, \beta^{\tiny \rm out}_\Omega \left(\frac{1}{4}\right).
$
Considering the associated  inscribed parallelogram with vertices $a, b, c, d$ as above and using that equality holds in \eqref{inequalitybeta14}, we deduce that
$$
{\rm Area}(\square abcd) = \frac 12 \,{\rm Area}(\square ABCD) \;>\;  2\, \beta^{\tiny \rm out}_\Omega \left(\frac{1}{4}\right) \\
=   - 4\, \beta^{\tiny \rm symp}_\Omega \left(\frac{1}{4}\right),
$$
which leads to the contradiction:
$$ - \frac{{\rm Area}(\square abcd)}{4} < \beta^{\tiny \rm symp}_\Omega \left(\frac{1}{4}\right).
$$
Therefore $B^{\tiny \rm out}_\Omega$ must admit an invariant curve consisting of periodic orbits of period $4$. In particular, each of this orbits determines an inscribed parallelogram of minimal action for
$B^{\tiny \rm symp}_\Omega$, hence also $B^{\tiny \rm symp}_\Omega$ admits an invariant curve consisting of periodic points of period $4$.

\end{proof}

\medskip

\begin{theorem}\label{thm:nonsymmetric1/4}
Let $\Omega$ be such that both $B_{\Omega}^{\tiny\mathrm{out}}$ 
and $B_{\Omega}^{\tiny\mathrm{sym}}$
admit invariant curves consisting of periodic points of period 4.  Then 
$$	\beta^{\tiny \rm out}_\Omega \left(\frac 14\right) \geq  \frac{|\Omega|}{\pi} \beta^{\tiny \rm out}_{\D}\left(\frac 14\right),$$
where $|\Omega|$ denotes the area of $\Omega$ and $\D$ the unit disk.  Moreover, equality holds if and only if $\Omega$ is an ellipse.\\
\end{theorem}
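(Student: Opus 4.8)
The plan is to follow the template of Theorem~\ref{thm:beta1/3}: first promote the inequality \eqref{inequalitybeta14} to an equality using both invariant-curve hypotheses, and then combine it with the symplectic estimate of Theorem~\ref{thmsymplbill}. Since the Proposition already gives $\beta^{\tiny \rm out}_\Omega(\tfrac14)+2\,\beta^{\tiny \rm symp}_\Omega(\tfrac14)\le 0$, only the reverse inequality needs proof. To obtain it, I would take a period-$4$ outer orbit $PQRS$ lying on the outer invariant curve; being action-minimizing, it realizes ${\rm Area}(\square PQRS)=4\,\beta^{\tiny \rm out}_\Omega(\tfrac14)$, and its tangency points $a,b,c,d$ are the midpoints of its sides. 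By Lemma~\ref{lemmaquadrilater} the inscribed quadrilateral $abcd$ is a parallelogram with ${\rm Area}(\square abcd)=\tfrac12\,{\rm Area}(\square PQRS)$. The decisive point is that, under the two hypotheses, $abcd$ is in fact a \emph{maximal} inscribed quadrilateral, i.e.\ a symplectic orbit on the symplectic invariant curve, so that ${\rm Area}(\square abcd)=-4\,\beta^{\tiny \rm symp}_\Omega(\tfrac14)$; combining the last two identities yields exactly $\beta^{\tiny \rm out}_\Omega(\tfrac14)=-2\,\beta^{\tiny \rm symp}_\Omega(\tfrac14)$, the sought equality in \eqref{inequalitybeta14}.

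Granting this equality, the remainder is arithmetic identical to Theorem~\ref{thm:beta1/3}. Using Theorem~\ref{thmsymplbill}(i) together with the disk values $\beta^{\tiny \rm symp}_{\D}(\tfrac14)=-\tfrac12$ and $\beta^{\tiny \rm out}_{\D}(\tfrac14)=1$ (Examples~\ref{example2} and~\ref{example3}), I would write
\[
\beta^{\tiny \rm out}_\Omega\!\left(\tfrac14\right)=-2\,\beta^{\tiny \rm symp}_\Omega\!\left(\tfrac14\right)\ge -2\,\frac{|\Omega|}{\pi}\,\beta^{\tiny \rm symp}_{\D}\!\left(\tfrac14\right)=\frac{|\Omega|}{\pi}=\frac{|\Omega|}{\pi}\,\beta^{\tiny \rm out}_{\D}\!\left(\tfrac14\right),
\]
which is the claimed inequality. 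For the equality case, equality in the displayed chain forces equality in the symplectic bound of Theorem~\ref{thmsymplbill}(i) at $\rho=\tfrac14$, whence Theorem~\ref{thmsymplbill}(ii) gives that $\Omega$ is an ellipse; the reverse implication is immediate from the affine invariance of both billiards together with the disk computation.

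The main obstacle is precisely the decisive point above: showing that the Varignon parallelogram $abcd$ of a minimizing outer orbit is a maximizing symplectic orbit. By Corollary~\ref{remarkparallelogram} this amounts to showing that the outer orbits carried by the invariant curve are themselves \emph{parallelograms} (so that $abcd$ satisfies the symplectic reflection law) and that the resulting symplectic orbit lies on the symplectic invariant curve (so that it is area-maximizing). In the centrally symmetric situation of Theorem~\ref{thm:14centralsym} the parallelogram structure is automatic, but here it must be extracted from the \emph{coexistence} of the two invariant curves. Concretely, I expect to match the two one-parameter families of period-$4$ orbits through the midpoint/tangent-parallelogram correspondence of Lemma~\ref{lemmaquadrilater} and Corollary~\ref{remarkparallelogram}, using a rigidity argument to identify them; this is the delicate step, since the symplectic reflection law alone constrains only the sum of the two tangency parameters on each pair of opposite sides, and without the parallelogram structure one recovers merely the already-known inequality \eqref{inequalitybeta14}.
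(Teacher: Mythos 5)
Your overall architecture coincides with the paper's: the paper's proof consists precisely of asserting that, under the two invariant-curve hypotheses, equality holds in \eqref{inequalitybeta14}, and then running exactly the chain you display, using Theorem \ref{thmsymplbill}(i) together with $\beta^{\tiny \rm symp}_{\D}\left(\tfrac14\right)=-\tfrac12$ and $\beta^{\tiny \rm out}_{\D}\left(\tfrac14\right)=1$, with the equality case settled by Theorem \ref{thmsymplbill}(ii). That second half of your proposal is correct and matches the paper essentially verbatim (your added remark on the converse via affine invariance is fine).

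The genuine gap is the step you yourself label ``decisive'' and then do not carry out: promoting \eqref{inequalitybeta14} to an equality. Since all orbits on the two invariant curves are action-minimizing, every outer orbit on its curve realizes $4\beta^{\tiny \rm out}_\Omega\left(\tfrac14\right)$ and every symplectic orbit on its curve realizes $-4\beta^{\tiny \rm symp}_\Omega\left(\tfrac14\right)$; but to link the two values one needs exactly what you isolate, namely that the minimizing circumscribed quadrilaterals are parallelograms (equivalently, that the Varignon parallelogram of a minimizing outer orbit satisfies the symplectic reflection law, or dually that the tangent-line parallelogram of a maximal inscribed quadrilateral touches $\partial\Omega$ at the midpoints of its sides). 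Your proposal replaces this with ``I expect to match the two one-parameter families \dots using a rigidity argument,'' which is a hope, not an argument. Note that the matching is genuinely overdetermined in the naive count: a circumscribed parallelogram tangent at $a,b,c,d$ with the chords $ac$, $bd$ parallel to its sides is a symplectic orbit configuration for \emph{arbitrary} tangency offsets along the two pairs of parallel sides, while the outer-orbit condition pins both offsets at the midpoint, i.e.\ imposes two independent scalar conditions on a one-parameter family of curve orbits; so no soft continuity or intermediate-value device will produce the matching, and — as you concede — without it each of the two hypotheses separately only reproves the inequality \eqref{inequalitybeta14}, which points the wrong way. This is precisely why the rotation-number-$\tfrac13$ case (Corollary \ref{propouter13}, Theorem \ref{thm:beta1/3}) is easier: for triangles the correspondence of Lemma \ref{lemmarelationactions} is unconditional by Thales, so one invariant curve already forces equality, whereas for quadrilaterals the correspondence of Lemma \ref{lemmaquadrilater} and Corollary \ref{remarkparallelogram} is conditional on the parallelogram structure. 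You have located the pressure point correctly (the paper itself is very terse at exactly this spot), but as written your text establishes only \eqref{inequalitybeta14} and the arithmetic that would follow from the unproven identity $\beta^{\tiny \rm out}_\Omega\left(\tfrac14\right)=-2\beta^{\tiny \rm symp}_\Omega\left(\tfrac14\right)$; it is a proof strategy, not a proof, of the theorem.
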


\begin{proof}
Using  (\ref{inequalitybeta14}) and  Theorem \ref{thmsymplbill}, we get:
$$
\beta^{\tiny \rm out}_\Omega \left(\frac{1}{4}\right) = - 2\,\beta^{\tiny \rm symp}_\Omega \left(\frac{1}{4}\right)
\geq - 2\,\frac{|\Omega|}{\pi} \, \beta^{\tiny \rm symp}_{\D} \left(\frac{1}{4}\right)=$$
$$
= - \frac{2\,|\Omega|}{\pi} \left(
- \frac{1}{2} \sin \left(\frac{\pi}{2}\right)
\right)\\
= \frac{|\Omega|}{\pi} 
 \;=\; \frac{|\Omega|}{\pi} \, \beta^{\tiny \rm out}_{\D}\left(\frac 14\right),\\
$$
where we have used that $\beta^{\tiny \rm out}_{\D}\left(\frac 14\right) = \tan \frac{\pi}{4} = 1$.\\
In particular, if equality holds, then it follows that $\beta^{\tiny \rm symp}_\Omega (1/4) = \frac{|\Omega|}{\pi} \, \beta^{\tiny \rm symp}_{\D} (1/4)$, hence it follows from 
Theorem \ref{thmsymplbill} (ii) that $\Omega$ is an ellipse.
\end{proof}

\medskip

\begin{remark} Theorem \ref{thm:nonsymmetric1/4} gives a generalization of Theorem \ref{thm:betaRadon} since no centrall symmetry is not required in Theorem \ref{thm:nonsymmetric1/4}.
It would be interesting to find examples of non centrally symmetric $\Omega$ satisfying assumptions of Theorem \ref{thm:nonsymmetric1/4}.\\
\end{remark}

\medskip

We conclude this section by observing that the assumption on the existence of invariant curve is essential in Theorems \ref{thm:betaRadon} and  \ref{thm:nonsymmetric1/4}.
\begin{example}
Take a unit disk $\mathcal{D}$, and squeeze it to get a domain $\Omega$ of the same area as $\mathcal{D}$, as in the picture. Notice that the minimal area of the circumscribed quadrilateral is decreased. Hence, for such a domain $\Omega$ we have
\begin{equation}
	\label{eq:counterexamples}
	\beta^{\tiny \rm out}_\Omega \left(\frac 14\right) <  \frac{|\Omega|}{\pi} \beta^{\tiny \rm out}_{\D}\left(\frac 14\right).
\end{equation}
Obviously, one can get in this way also a centrally symmetric $\Omega$. 
\begin{figure}[h]
	\centering	
	\tikzset{every picture/.style={line width=0.75pt}} 
	\begin{tikzpicture}[x=0.75pt,y=0.75pt,yscale=-1,xscale=1]
		\draw  [dash pattern={on 4.5pt off 4.5pt}] (137,130.17) .. controls (137,87) and (172,52) .. (215.17,52) .. controls (258.34,52) and (293.33,87) .. (293.33,130.17) .. controls (293.33,173.34) and (258.34,208.33) .. (215.17,208.33) .. controls (172,208.33) and (137,173.34) .. (137,130.17) -- cycle ;
		\draw   (137,52) -- (293.33,52) -- (293.33,208.33) -- (137,208.33) -- cycle ;
		\draw [line width=1.5]    (137.35,130.86) .. controls (136.33,98.67) and (155.33,61.67) .. (174.33,59.67) .. controls (193.33,57.67) and (249.33,54.67) .. (263.33,61.67) .. controls (277.33,68.67) and (292.33,91.67) .. (293.33,131.17) ;
		\draw  [draw opacity=0][line width=1.5]  (293.33,130.86) .. controls (293.33,130.87) and (293.33,130.88) .. (293.33,130.89) .. controls (293.34,173.96) and (258.43,208.89) .. (215.35,208.9) .. controls (172.28,208.9) and (137.35,173.99) .. (137.35,130.92) .. controls (137.35,130.9) and (137.35,130.88) .. (137.35,130.86) -- (215.34,130.9) -- cycle ; \draw  [line width=1.5]  (293.33,130.86) .. controls (293.33,130.87) and (293.33,130.88) .. (293.33,130.89) .. controls (293.34,173.96) and (258.43,208.89) .. (215.35,208.9) .. controls (172.28,208.9) and (137.35,173.99) .. (137.35,130.92) .. controls (137.35,130.9) and (137.35,130.88) .. (137.35,130.86) ;  
		\draw [color={rgb, 255:red, 208; green, 2; blue, 27 }  ,draw opacity=1 ]   (137,56) -- (293,57) ;

	\end{tikzpicture}
	\caption{A convex set $\Omega$ satisfying \eqref{eq:counterexamples}.}
\end{figure}
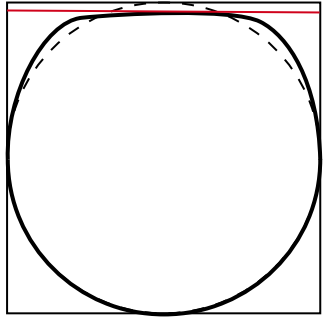
\end{example}

\subsection{Some open questions} \label{sec:openquestions}

We now formulate two questions that arise quite naturally from the previous discussion.\\

\noindent {\bf Question 1.} {\it Can one claim results similar to those in Theorems \ref{thm:beta1/3}, \ref{thm:betaRadon} and  \ref{thm:nonsymmetric1/4} for rational rotation numbers different from $1/3$ and $1/4$, under the assumption that there exists of an invariant curve of that rotation number and consisting of periodic points?}\\

Inspecting the proof of Theorems \ref{thm:beta1/3} and \ref{thm:nonsymmetric1/4}, it seems quite natural to frame this question in terms of area deviation. It is possible to show (see \cite[Section 2.4]{Fejesetco}) that, for any $n\geq 3$ there exists a circumscribed $n$-gon $\mathcal{P}_n$ and and inscribed one $\mathcal{R}_n$ satisfying 
\[
\frac{\vert \mathcal{P}_n\vert-\vert\mathcal{R}_n\vert}{\vert\mathcal{P}_n\vert} \le \sin^2 \left(\frac{\pi}{n}\right).
\]
For $n=3,4$ Proposition \ref{propouter13} and Lemma \ref{lemmaquadrilater} show that this inequality is actually an equality for all outer billiard trajectories, independently on the shape of $\Omega$. We thus wonder:\\

\noindent {\bf Question 2.} {\it For which $\Omega$ there exists $n$-gons $\mathcal{P}_n$ and $\mathcal{R}_n$ 
such that
\begin{equation}
	\label{ratio_areas_equality}
\frac{\vert \mathcal{P}_n\vert-\vert\mathcal{R}_n\vert}{\vert\mathcal{P}_n\vert} = \sin^2 \left(\frac{\pi}{n}\right)?
\end{equation}
}

\medskip

For such an $\Omega$ we would have that
\begin{equation}
	\label{inequality_beta_outer}
\beta_{\Omega}^{\mathrm{out}}\left(\frac{1}{n}\right) \ge \frac{\vert \mathcal{P}_n\vert}{n}=\frac{\vert \mathcal{R}_n\vert}{n \cos^2\left(\frac{\pi}{n}\right)} \ge-\beta^{\mathrm{symp}}_\Omega\left(\frac{1}{n}\right)\sec^2\left(\frac{\pi}{n}\right).
\end{equation}
If in \eqref{inequality_beta_outer} equalities hold, both the vertices of $\mathcal{P}_n$ and $\mathcal{R}_n$ correspond  to minimizing configutations. If equality \eqref{ratio_areas_equality} holds for all billiard trajectories, $\Omega$ possesses an invariant curve if and only if \eqref{inequality_beta_outer} is an equality. Moreover, combining with Theorem \ref{thmsymplbill}, we conclude that for any $\Omega$ satisfying \eqref{ratio_areas_equality} 
\[
\beta_{\Omega}^{\mathrm{out}}\left(\frac{1}{n}\right) \ge -\frac{\vert \Omega \vert }{\pi} \beta^{\mathrm{symp}}_{\mathcal{D}}\left(\frac{1}{n}\right)\sec^2\left(\frac{\pi}{n}\right)  = \frac{\vert \Omega\vert}{\pi} \beta_{\mathcal{D}}^{\mathrm{out}}\left(\frac{1}{n}\right),
\]
with equality if and only if $\Omega$ is an ellipse, {\it i.e.}, the same type of estimate as in Theorems \ref{thm:beta1/3} and \ref{thm:nonsymmetric1/4}.

\end{document}